\documentclass[10pt]{article}
   \usepackage{graphicx}
   \usepackage{epsfig}
   \usepackage{amssymb}
   \usepackage{amsmath}
   \usepackage{amsthm}
   \newtheorem{lemma}{Lemma}[section]
   \newtheorem{theorem}{Theorem}[section]

   {\end{description}}

   {\hfill $\bullet$ \\}

   \newcommand{\be}{\begin{equation}}
   \newcommand{\ee}{\end{equation}}

\begin{document}
    \title{A strong second-order two-stage explicit/implicit technique with spectral orthogonal basis Galerkin finite element method for two-dimensional Gray-Scott model}
  \author{Eric Ngondiep\thanks{\textbf{Correspondence to:} Eric Ngondiep, ericngondiep@gmail.com/engondiep@imamu.edu.sa}}
   \date{\small{Department of Mathematics and Statistics, College of Science, Imam Mohammad Ibn Saud\\ Islamic University
        (IMSIU), $90950$ Riyadh $11632,$ Saudi Arabia.}}
   \maketitle

   \textbf{Abstract.}
      This paper proposes a strong second-order two-step explicit/implicit technique with spectral orthogonal basis Galerkin finite element method for solving a two-dimensional Gray-Scott model subject to appropriate initial and boundary conditions. The constructed approach discretizes at the first stage utilizing a second-order explicit method while a second-order implicit scheme is employed at the second phase. The space derivatives are approximated with the Galerkin finite element formulation combined with a spectral orthogonal basis. With this combination, the errors increased at the first stage are balanced by the errors decreased at the second phase so that the stability is maintained. Furthermore, the use of the spectral orthogonal basis minimizes the space errors. Thus, the new computational approach calculates efficiently numerical solutions and preserves a strong stability and high-order accuracy. The theoretical studies indicate that the proposed strategy is unconditionally stable, temporal second-order accurate and spatial $qth$-order convergent using the $L^{\infty}(0,T;[L^{\infty}(\Omega)]^{2})$-norm, where $q$ is an integer greater than or equal $2$. Some numerical examples are performed to confirm the theory and to demonstrate the efficiency of the developed algorithm.\\
    \text{\,}\\

   \ \noindent {\bf Keywords:} two-dimensional Gray-Scott system, explicit-implicit methods, spectral orthogonal basis, Galerkin finite element method, strong stability, error estimates.\\
   \\
   {\bf AMS Subject Classification (MSC). 65M12, 65M15, 65M60, 65M70}.

  \section{Introduction}\label{sec1}
   A variety of physical-chemical processes are represented mathematically by the reaction-diffusion systems \cite{36tir,1en,10tir,38tir,2en,45tir}. The rate of heat and mass formation is described by the reaction term \cite{10zwz,14am} whereas the rate of heat and mass transfer is described by the diffusion term \cite{26am}. The temperature and concentration distributions are usually explained by this model with some pattern formation \cite{5tir,44tir,20zwz}. The authors of \cite{44tir,21zwz} employed numerical approaches to explain pattern generation, whereas many chemical experiments discovered the two-dimensional Turing bifurcations and Turing patterns \cite{3tir,11tir,4tir}. Consequently, the reaction-diffusion model has been used in many fields, including biology, chemistry, medicine, and more. Both the Gray-Scott (GS) model \cite{14am,7tir,10zwz} and the Gierer-Meinhardt model \cite{9zwz} are the two most popular variations which have drawn a lot of interest. However, the Gray-Scott system is well studied among chemical reaction-diffusion models. This model, developed by P. Gray and S. K. Scott in the $1980$s, is a simplified version of the Oregonator model of the Belousov-Zhabotinsky reaction \cite{10zwz}. The stability of homoclinic and Gray-Scott model heteroclinic orbit systems was studied by a few authors \cite{7tir}.  Thus, the Gray-Scott system is a prime example of emergent behavior, self-organization, and instability-driven pattern generation that goes beyond its chemical origins in physics, mathematical biology, and materials science \cite{14am,10zwz}. About mathematics' well-posedness, this model belongs to a broad class of differential equations whose mathematical properties such as: existence, uniqueness, and regularity of solutions, have been extensively studied using frameworks for weak and strong solutions \cite{42tir,3en,8tir,36tir,5en}.

   In this paper, we consider the following Gray-Scott problem defined in \cite{tir} as\\

     \begin{equation}\label{1}
     \left\{
       \begin{array}{ll}
         u_{t}-\alpha_{1}\Delta u=\beta_{0}(1-u)-uv^{2}, & \hbox{on $\Omega\times(0,\text{\,}T]$} \\
     \text{\,}\\
         v_{t}-\alpha_{2}\Delta v=-(\beta_{0}+k_{0})v+uv^{2},  & \hbox{on $\Omega\times(0,\text{\,}T]$} \\
       \end{array}
     \right.
     \end{equation}
     with initial conditions
      \begin{equation}\label{2}
      u(x,0)=u_{0}(x),\text{\,\,\,}v(x,0)=v_{0}(x),\text{\,\,\,}u_{t}(x,0)=u_{1}(x),\text{\,\,\,}v_{t}(x,0)=v_{1}(x),\text{\,\,\,\,on\,\,\,\,}\overline{\Omega}=\Omega\cup\Gamma,
     \end{equation}
     and Neumann boundary conditions
      \begin{equation}\label{3}
      \frac{\partial u}{\partial \eta}=0,\text{\,\,\,}\frac{\partial v}{\partial \eta}=0,\text{\,\,\,\,on\,\,\,\,}\Gamma\times(0,\text{\,}T],
     \end{equation}
     where $\Omega$ is a bounded domain in $\mathbb{R}^{2}$ with piecewise smooth boundary $\Gamma$, $T$ represents the final time, $\alpha_{j}>0$, $j=1,2$, are the diffusion coefficients for the chemical species $u$ and $v$, $\beta_{0}>0$, denotes the feed rate of the reactant $u$, $\beta_{0}+k_{0}>0$, represents the removal rate of the intermediate species $v$, $w_{t}$ designates $\frac{\partial{w}}{\partial{t}}$ for a function $w$, $\Delta$ means the Laplace operator  whereas: $u_{0}$, $v_{0}$, $u_{1}$, and $v_{1}$, denote the initial conditions. Furthermore, the homogeneous Neumann boundary conditions are used to simulate a closed or insulated system in which no material can enter or exit the domain. In terms of biology, this simulates situations where neither species can leave the domain, like a petri dish or a membrane-bounded cell.\\

       Setting
       \begin{equation*}
        w=(u,v)^{t}, \text{\,\,}w_{0}=(u_{0},v_{0})^{t},\text{\,\,} w_{1}=(u_{1},v_{1})^{t},\text{\,\,} \Lambda w=(\Delta u,\Delta v)^{t}, \text{\,\,} F_{1}(t,u,v)=\beta_{0}(1-u)-uv^{2},
       \end{equation*}
     \begin{equation}\label{4a}
    \text{\,\,} F_{2}(t,u,v)=-(\beta_{0}+k_{0})v+uv^{2}, \text{\,\,}F(t,w)=(F_{1}(t,w),F_{2}(t,w))^{t},
    \end{equation}
     the initial-boundary value problem $(\ref{1})$ -$(\ref{3})$ can be expressed in the vector form as
      \begin{equation}\label{4}
      w_{t}-\alpha.*\Lambda w=F(t,w), \text{\,\,\,on\,\,\,}\Omega\times(0,\text{\,}T],
     \end{equation}
     subject to initial conditions
      \begin{equation}\label{5}
      w(x,0)=w_{0}(x),\text{\,\,\,}w_{t}(x,0)=w_{1}(x),\text{\,\,\,\,on\,\,\,\,}\overline{\Omega},
     \end{equation}
     and boundary condition
      \begin{equation}\label{6}
      \frac{\partial w}{\partial \eta}=0,\text{\,\,\,\,on\,\,\,\,}\Gamma\times(0,\text{\,}T],
     \end{equation}
     where the symbol ".*" means the componentwise multiplication and $w^{t}$ denotes the transpose of the vector $w$.\\

     The construction of the new strategy in the sequel will make extensive use of the nonlinear system $(\ref{4})$ with initial-boundary conditions $(\ref{5})$ -$(\ref{6})$.\\

       The development of speckle patterns has been the subject of numerous important and useful research findings in recent years, and several researchers have developed a broad range of numerical techniques for solving a large class of  reaction-diffusion systems including the Gray-Scott problems, each motivated by the nonlinear and stiff nature of the equations. These approaches consider: radial basis function methods \cite{10tir}, radial basis functions based differential quadrature strategy \cite{9tir,22tir,19tir}, POD reduced-order model based on spectral Galerkin method \cite{13tir}, mesh method based on moving kriging element free Galerkin \cite{11tir}, implicit-explicit schemes \cite{17am,18am,4en,zwz,16am}, modified cubic B-spline differential quadrature techniques \cite{38tir}, high-fidelity simulations \cite{42tir},  adaptive mesh computations \cite{45tir}, time-stepping finite element methods (FEM) \cite{14zwz,3en}, etc... However, The quality of mesh division has a major impact on the precision and dependability of conventional numerical techniques in simulating nonlinear physical and engineering problems. These traditional methods often run into mesh distortion when dealing with complex geometric shapes and significant deformations \cite{22tir}. The Method of Lines method is the foundation of another significant field of study. This method converts the problem into a big system of stiff ordinary differential equations (ODEs) in time by discretizing the spatial variables using an appropriate method. In this approach, the stiff linear terms are treated implicitly while the nonlinear terms are handled explicitly. Thus. it creates schemes with favorable stability qualities and high-order accuracy appropriate for multidimensional systems by combining the stability benefits of implicit ODE solvers with the effectiveness of explicit nonlinearity treatment. In this paper, we propose an efficient strong second-order two-stage explicit-implicit technique with spectral orthogonal basis Galerkin finite element method (SOBGFEM) for simulating the two-dimensional Gray-Scott system $(\ref{1})$, subject to initial conditions $(\ref{2})$ and boundary conditions $(\ref{3})$. The developed technique discretizes the space derivatives using the Galerkin finite element spaces with spectral orthogonal basis whereas the time derivative is approximated with a second-order two-step explicit-implicit strategy. Moreover, the first stage computes explicitly with a second-order scheme to provide a predicted solution which is corrected at the second-stage with a second-order implicit method yielding the finial approximate solution. The new computational approach is unconditionally stable, temporal second-order convergent, spatial $q$-order accurate (where $q$ is an integer greater or equal than $3$), and takes many advantages compared to a broad range on numerical schemes proposed in the literature for solving a general set of nonlinear reaction-diffusion systems \cite{zwz,tir,17am,11tir,18am}.  The errors increased at the first-stage are balanced by the errors decreased at the second-stage so that the stability of the constructed algorithm is preserved. Additionally, the spectral orthogonal basis Galerkin finite element formulation extracts and retains the essential characteristics of approximate solution within a finite dimensional space, reducing significantly the computational costs. The approximations obtained from the spectral orthogonal basis Galerkin FEM deals with the solution and its space derivatives throughout a computational domain and thus, achieving spatial high-order accuracy \cite{13tir,11tir,9tir}. The highlights of this paper are the following:
     \begin{description}
      \item[(a)] construction of the two-stage explicit-implicit technique with spectral orthogonal basis Galerkin FEM for solving the Gray-Scott model $(\ref{1})$ subject to initial conditions $(\ref{2})$ and boundary conditions $(\ref{3})$,
      \item[(b)] stability analysis and error estimate of the developed computational technique,
      \item[(c)] numerical examples to confirm the theoretical results and to demonstrate the utility and efficiency of the proposed strategy.
     \end{description}

      In the sequel we proceed as follows. Section $\ref{sec2}$ develops the two-stage explicit-implicit method with spectral orthogonal basis Galerkin finite element procedure for simulating the nonlinear system of PDEs  $(\ref{1})$ with initial-boundary conditions $(\ref{2})$-$(\ref{3})$. In Section $\ref{sec3}$, both stability and error estimates of the proposed numerical method are deeply analyzed whereas some numerical examples are carried out in Section $\ref{sec4}$ to confirm the theoretical studies. Section $\ref{sec5}$ draws the general conclusions and presents our future works.

    \section{Development of the two-stage explicit/implicit computational approach}\label{sec2}
    This section develops a strong second-order two-stage explicit/implicit scheme combined with a spectral orthogonal basis Galerkin finite element method (SOBGFEM) for a computed solution of the Gray-Scott system $(\ref{1})$ subject to initial-boundary conditions $(\ref{2})$-$(\ref{3})$.\\

    Let $\Pi_{h}=\{K_{i},\text{\,}i=1,2,...,M\}$, where $M$ is a positive integer, be a conforming discretization of the domain $\overline{\Omega}$ which consists of triangles $K$. Here, $h=\underset{1\leq i\leq M}{\max}|K_{i}|$, where $|K|$ means the diameter of $K$. The triangulation $\Pi_{h}$ is assumed to satisfy the following properties: (i) $int(K)\neq\emptyset$, for any $K\in\Pi_{h}$; (ii) $int(K_{i})\cap int(K_{j})=\emptyset$, if $i\neq j$, whereas $K_{i}\cap K_{j}\in\{e,\emptyset\}$, for $i,j=1,...,M$, where $e$ denotes the common face/edge; (iii) the triangulation $\Pi_{\Gamma h}$ induced on $\Gamma$ is quasi-uniform.\\

     Let $\sigma=\frac{T}{N}$ be the time step, where $N$ is a positive integer, and $\top_{\sigma}=\{t_{n}=n\sigma,\text{\,}n=0,1,...,N\}$, be a uniform partition of the time interval $[0,\text{\,}T]$. For the convenience of writing, set  $w_{h}^{n}=w_{h}(x,t_{n})$ and $w^{n}=w(x,t_{n})$, be the value of the computed solution and analytical solution, respectively, at point $(x,t_{n})$.\\

    We denote $\Gamma_{i}$ be the boundary of the triangle $K_{i}$ and $n_{\Gamma_{i}}$ be the unit outward normal vector of an element $e=\Gamma_{i}\cap\Gamma_{j}$. The jump and average of a vector-valued function $U$ are designated by $[[U]]$ and $\{U\}$, respectively. Both jump and average are defined as

    \begin{equation}\label{7}
     [[U]]=(U|_{\Gamma_{i}})n_{\Gamma_{i}}+(U|_{\Gamma_{j}})n_{\Gamma_{j}}\text{\,\,\,\,and\,\,\,\,}\{U\}=\frac{1}{2}(U|_{\Gamma_{i}}+U|_{\Gamma_{j}}).
    \end{equation}

  Furthermore, when $e=K_{i}\cap\Gamma$, it holds
   \begin{equation}\label{8}
    [[U]]=(U|_{\Gamma_{i}})n_{\Gamma_{i}}\text{\,\,\,\,and\,\,\,\,}\{U\}=U|_{\Gamma_{i}}.
   \end{equation}

  We remind that if $U$ is continuous on $\Omega$, so $[[U]]=0$ and $\{U\}=U$, on $\overline{K}_{i}=K_{i}\cup\Gamma_{i}$, for $i=1,...,M$.\\

  Let $\mathcal{H}_{h}$ and $\mathcal{W}_{h}$, be the sets of all piecewise polynomials defined on the domain $\Omega$.
   \begin{equation}\label{9}
   \mathcal{H}_{h}=\{z_{h}\in H^{1}(\Omega):\text{\,}z_{h}|_{K_{i}}\in\mathcal{P}_{q+1}(K_{i}),\text{\,}[[\nabla z_{h}]]=0,\text{\,\,for\,\,}i=1,...,M\},
    \end{equation}

   where $\mathcal{P}_{q+1}(K_{i})$ represents the set of all polynomials defined on $K_{i}\subset\overline{\Omega}$, with degree not exceeding $q+1$. Thus, $\mathcal{H}_{h}$ is a finite dimensional subspace of $H^{1}(\Omega)$ with dimension $\bar{M}_{q}$ less than $\frac{M(q+3)!}{2!(q+1)!}=\frac{M(q+3)(q+2)}{2}$. Set
   \begin{equation}\label{10}
   \mathcal{W}_{h}=\mathcal{H}_{h}\times\mathcal{H}_{h}.
   \end{equation}

   Hence $\mathcal{W}_{h}$ is a finite dimensional subspace of [$H^{1}(\Omega)]^{2}$, with dimension $M_{q}=2\bar{M}_{q}$.\\

     We consider the set $\mathcal{W}$, the linear operator $\Theta$, and the scalar products: $\left(\cdot,\cdot\right)_{K_{i}}$, $\left(\cdot,\cdot\right)$, $\left(\cdot,\cdot\right)_{\nabla}$, and $\left(\cdot,\cdot\right)_{\#}$, defined as:
     \begin{equation*}
     \mathcal{W}=\{U\in [H^{1}(\Omega)]^{2}:\text{\,}[[U]]=0,\text{\,\,on\,\,}e\in\{K_{i}\cap K_{j}\neq\emptyset,\text{\,}K_{i}\cap\Gamma\neq\emptyset\}, \text{\,\,for\,\,}i,j=1,...,M\},
     \end{equation*}
     \begin{equation*}
     \Theta U=(\nabla u_{1},\nabla u_{2}),\text{\,}\left(u_{1},v_{1}\right)_{K_{i}}=\int_{K_{i}}u_{1}v_{1}dx,\text{\,} \left(U,V\right)_{K_{i}}=\int_{K_{i}}U^{t}Vdx,\text{\,}\left(B_{1},B_{2}\right)_{K_{i}}=\underset{l=1}{\overset{2}\sum}\underset{j=1}{\overset{2}\sum}
     \int_{K_{i}}B_{1lj}B_{2lj}dx,
     \end{equation*}
     \begin{equation}\label{11}
    \left(u_{1},v_{1}\right)=\underset{i=1}{\overset{M}\sum}\left(u_{1},v_{1}\right)_{K_{i}},\text{\,}\left(U,V\right)=\underset{i=1}{\overset{M}\sum}\left(U,V\right)_{K_{i}},\text{\,}
    \left(u_{1},v_{1}\right)_{\nabla}=\underset{i=1}{\overset{M}\sum}\left(\nabla u_{1},\nabla v_{1}\right)_{K_{i}},\text{\,}
    \left(B_{1},B_{2}\right)_{\#}=\underset{i=1}{\overset{M}\sum}\left(B_{1},B_{2}\right)_{K_{i}},
     \end{equation}
    and the bilinear form $\left(\cdot,\cdot\right)_{.}$ and $\left(\cdot,\cdot\right)_{\#,.}$, defined as
    \begin{equation}\label{11a}
    \left(U,V\right)_{.}=\begin{pmatrix}
                           \left(u_{1},v_{1}\right) \\
                           \text{\,}\\
                           \left(u_{2},v_{2}\right) \\
                         \end{pmatrix}
    ,\text{\,\,\,}\left(\Theta U,\Theta V\right)_{\#,.}=\begin{pmatrix}
                           \left(u_{1},v_{1}\right)_{\nabla} \\
                           \text{\,}\\
                           \left(u_{2},v_{2}\right)_{\nabla} \\
                         \end{pmatrix},
     \end{equation}
    for every vector-valued functions $U=(u_{1},u_{2})^{t}$, $V=(v_{1},v_{2})^{t}$, and $2\times2$-matrices $B_{j}$, $j=1,2$, defined on $\Pi_{h}$. The norms: $\|\cdot\|_{K_{i}}$, $\|\cdot\|$, $\|\cdot\|_{\nabla}$, and $\|\cdot\|_{\#}$ associated with these scalar products are defined as
     \begin{equation}\label{12}
    \|U\|_{K_{i}}=\sqrt{\left(U,U\right)_{K_{i}}},\text{\,\,}\|U\|=\sqrt{\left(U,U\right)},\text{\,\,}\|U\|_{\nabla}=\sqrt{\left(U,U\right)_{\nabla}},
    \text{\,\,and\,\,}\|B_{1}\|_{\#}=\sqrt{\left(B_{1},B_{1}\right)_{\#}}.
     \end{equation}

    The following integration by parts will play a crucial role in our analysis
    \begin{equation*}
    \left(\Lambda U,V\right)_{\cdot}=\underset{i=1}{\overset{M}\sum}\int_{K_{i}}(\Lambda U).*Vdx=\underset{i=1}{\overset{M}\sum}\left[\underset{e\in\Gamma_{i}}{\sum}
    \int_{e}V.*[[\Theta U]]de-\int_{K_{i}}(\Theta U).*\Theta Vdx\right]=
    \end{equation*}
     \begin{equation}\label{13}
    \underset{i=1}{\overset{M}\sum}\underset{e\in\Gamma_{i}}{\sum}\int_{e}V.*[[\Theta U]]de -\left(\Theta U,\Theta V\right)_{\#,.},
     \end{equation}
     for every $U=(u_{1},u_{2})^{t}\in[H^{2}(\Omega)]^{2}$ and $V=(v_{1},v_{2})^{t}\in[H^{1}(\Omega)]^{2}$.\\

   Let $\mathcal{B}=\{\phi_{l},\text{\,}l=1,2,...,\bar{M}_{q}\}$, be an orthogonal basis of the finite dimensional subspace $\mathcal{H}_{h}$ with respect to both scalar products $\left(\cdot,\cdot\right)$ and $\left(\cdot,\cdot\right)_{\nabla}$. Additionally, $\mathcal{B}$ is an orthonormal basis with the $L^{2}$-scalar product $\left(\cdot,\cdot\right)$. Indeed, there is a unique self-adjoint and positive definite operator $A$ from $\mathcal{H}_{h}$ to $\mathcal{H}_{h}$, satisfying for every $u_{1},v_{1}\in\mathcal{H}_{h}$, $\left(u_{1},v_{1}\right)_{\nabla}=\left(Au_{1},v_{1}\right)$. Further, the elements of $A$ are given by
    \begin{equation}\label{lp}
   (A|_{K_{i}})_{lj}=\left(\nabla \rho_{l},\nabla\phi_{j}\right)_{K_{i}},
    \end{equation}
    for $i=1,2,...,M$, and $l,j=1,2,...,\bar{M}_{q}^{[1]}$, where $\bar{M}_{q}^{[1]}$ is less than or equal $\frac{1}{2}(q+3)(q+2)$, and $\{\rho_{l},\text{\,}l=1,2,...,\bar{M}_{q}\}$ is an arbitrary basis of $\mathcal{H}_{h}$. Thus, $\mathcal{B}=\{\phi_{l},\text{\,}l=1,2,...,\bar{M}_{q}\}$ is the basis formed with the "$L^{2}$-orthonormal basis" of eigenvectors of $A$. Let $\lambda_{j}>0$, be the eigenvalue associated with the eigenvectors $\phi_{j}$. As a result, for every $v_{h}\in\mathcal{H}_{h}$, there is a unique $\bar{M}_{q}$-uplet  $(v_{h,1},v_{h,2},...,v_{h,\bar{M}_{q}})$, such that
     \begin{equation}\label{14}
      v_{h}|_{K_{i}}=\underset{l=1}{\overset{\bar{M}_{q}^{[1]}}\sum}v_{h,l}\phi_{l}|_{K_{i}},
     \end{equation}
     for $i=1,2,...,M$, where $z|_{K_{i}}$ means the restriction of the function $z$ on $K_{i}$.\\

      Suppose that $p_{h}$ is the $L^{2}$-projection from $L^{2}(\Omega)$ onto $\mathcal{H}_{h}$. Let $P_{h}=(p_{h},p_{h})$, be the orthogonal projection from $[L^{2}(\Omega)]^{2}$ onto $\mathcal{W}_{h}$. The projection $P_{h}$ is defined as: for every $U=(u_{1},u_{2})^{t}\in[L^{2}(\Omega)]^{2}$,
      \begin{equation}\label{15}
       P_{h}U=(p_{h}u_{1},p_{h}u_{2})^{t}.
     \end{equation}

     In addition, both projections satisfy
    \begin{equation}\label{16}
    \left(p_{h}u_{1},u_{1h}\right)=\left(u_{1},u_{1h}\right),\text{\,} \left(P_{h}U,U_{h}\right)=\left(U,U_{h}\right),
     \end{equation}
    for every $u_{1h}\in \mathcal{H}_{h}$, and $U_{h}\in\mathcal{W}_{h}$.\\

    To construct the first stage of the second-order explicit/implicit computational approach, we should integrate equation $(\ref{4})$ over the interval $[t_{n},t_{n+\frac{1}{2}}]$, while the integrant must be approximated at $t_{n-\frac{1}{2}}$ and $t_{n}$. The integration of equation $(\ref{4})$ from $t_{n}$ to $t_{n+\frac{1}{2}}$ gives
    \begin{equation*}
     w^{n+\frac{1}{2}}-w^{n}-\int_{t_{n}}^{t_{n+\frac{1}{2}}}\alpha.*\Lambda wdt=\int_{t_{n}}^{t_{n+\frac{1}{2}}}F(t,w)dt.
     \end{equation*}

     But, direct calculations provide
     \begin{equation}\label{17}
     w^{n+\frac{1}{2}}-w^{n}=\alpha.*\int_{t_{n}}^{t_{n+\frac{1}{2}}}\Lambda wdt+\int_{t_{n}}^{t_{n+\frac{1}{2}}}F(t,w)dt.
     \end{equation}

     The approximation of the functions $\Lambda w(t)$ and $F(t,w(t))$ at the points $t_{n-\frac{1}{2}}$ and $t_{n}$ yields
     \begin{equation*}
     \Lambda w(t)=\frac{2}{\sigma}[(t-t_{n-\frac{1}{2}})\Lambda w^{n}-(t-t_{n})\Lambda w^{n-\frac{1}{2}}]+\frac{1}{2}(t-t_{n-\frac{1}{2}})(t-t_{n})\Lambda w_{2t}(\theta_{1}(t)),
       \end{equation*}
     \begin{equation*}
     F(t,w(t))=\frac{2}{\sigma}[(t-t_{n-\frac{1}{2}})F(t_{n},w^{n})-(t-t_{n})F(t_{n-\frac{1}{2}},w^{n-\frac{1}{2}})]+
     \frac{1}{2}(t-t_{n-\frac{1}{2}})(t-t_{n})\frac{d^{2}}{dt^{2}}[F(t,w)](\theta_{2}(t)),
       \end{equation*}
     where $\theta_{j}(t)$, $j=1,2$, are between the minimum and maximum of $t_{n-\frac{1}{2}}$, $t_{n}$, and $t$. Utilizing the weighted mean value theorem for integrals, it holds
    \begin{equation}\label{18}
     \int_{t_{n}}^{t_{n+\frac{1}{2}}}\Lambda w(t)dt=\frac{\sigma}{4}(3\Lambda w^{n}-\Lambda w^{n-\frac{1}{2}})+\frac{5\sigma^{3}}{96}\Lambda w_{2t}(\theta_{1}^{n}),
     \end{equation}
     \begin{equation}\label{19}
     \int_{t_{n}}^{t_{n+\frac{1}{2}}}F(t,w(t))dt=\frac{\sigma}{4}(3F(t_{n},w^{n})-F(t_{n-\frac{1}{2}},w^{n-\frac{1}{2}}))+\frac{5\sigma^{3}}{96}\frac{d^{2}}{dt^{2}}[F(t,w)]
     (\theta_{2}^{n}),
     \end{equation}
     where $\theta_{l}^{n}\in(t_{n-\frac{1}{2}},t_{n+\frac{1}{2}})$, for $l=1,2$. Substituting equations $(\ref{18})$ and $(\ref{19})$ into $(\ref{17})$, and rearranging terms result in
     \begin{equation*}
     w^{n+\frac{1}{2}}-\frac{\sigma}{4}\alpha.*\Lambda(3w^{n}-w^{n-\frac{1}{2}})=w^{n}+\frac{\sigma}{4}(3F_{n}- F_{n-\frac{1}{2}})+
     \frac{5\sigma^{3}}{96}\left(\alpha.*\Lambda w_{2t}(\theta_{1}^{n})+\frac{d^{2}}{dt^{2}}[F(t,w)](\theta_{2}^{n})\right),
     \end{equation*}
     where we set $F_{s}=F(t_{s},w^{s})$, for $s\in\{n,n-\frac{1}{2}\}$.\\

    Left multiplication of this equation by $V\in[H^{1}(\Omega)]^{2}$, integrating over $K_{i}$, summing the obtained equation, for $i=1,2,...,M$, and using the bilinear operators $\left(\cdot,\cdot\right)_{.}$ and $\left(\cdot,\cdot\right)_{\#,.}$, defined by equation $(\ref{11a})$, together with the integration by parts $(\ref{13})$, and performing simple calculations to get
    \begin{equation*}
     \left(w^{n+\frac{1}{2}},V\right)_{.}+\frac{\sigma}{4}\left(\alpha.*\Theta(3w^{n}-w^{n-\frac{1}{2}}),\Theta V\right)_{\#,.}=\left(w^{n},V\right)_{.}+\frac{\sigma}{4}\left(
     \left(3F_{n}-F_{n-\frac{1}{2}},V\right)_{.}+\right.
     \end{equation*}
   \begin{equation}\label{20}
   \left.\underset{i=1}{\overset{M}\sum}\underset{e\in\Gamma_{i}}{\overset{M}\sum}\int_{e}\alpha.*[[\Theta(3w^{n}-w^{n-\frac{1}{2}})]]Vde\right)+
   \frac{5\sigma^{3}}{96}\left(\alpha.*\Lambda w_{2t}(\theta_{1}^{n})+\frac{d^{2}}{dt^{2}}[F(t,w)](\theta_{2}^{n}),V\right)_{.}.
     \end{equation}

    Since $w\in H^{3}(0,T;\text{\,}[H^{q+1}(\Omega)]^{2})$, where $q\geq2$, so $\Theta w$ is continuous on $\Omega$. Then $[[\Theta(3w^{n}-w^{n-\frac{1}{2}})]]=3[[\Theta w^{n}]]+[[\Theta w^{n-\frac{1}{2}}]]=0$. Thus equation $(\ref{20})$ is equivalent to
    \begin{equation*}
     \left(w^{n+\frac{1}{2}},V\right)_{.}+\frac{\sigma}{4}\left(\alpha.*\Theta(3w^{n}-w^{n-\frac{1}{2}}),\Theta V\right)_{\#,.}=\left(w^{n},V\right)_{.}+\frac{\sigma}{4}
     \left(3F_{n}-F_{n-\frac{1}{2}},V\right)_{.}+
     \end{equation*}
   \begin{equation}\label{20a}
    \frac{5\sigma^{3}}{96}\left(\alpha.*\Lambda w_{2t}(\theta_{1}^{n})+\frac{d^{2}}{dt^{2}}[F(t,w)](\theta_{2}^{n}),V\right)_{.},\text{\,\,\,\,\,}V\in[H^{1}(\Omega)]^{2}.
     \end{equation}

    Omitting the error term: $\frac{5\sigma^{3}}{96}\left(\alpha.*\Lambda w_{2t}(\theta_{1}^{n})+\frac{d^{2}}{dt^{2}}[F(t,w)](\theta_{2}^{n}),V\right)_{.}$, replacing the exact solution $w\in H^{3}(0,T;\text{\,}[H^{q+1}(\Omega)]^{2})$, with the computed solution $w_{h}(t)=(u_{h}(t),v_{h}(t))^{t}=\left(\underset{l=1}{\overset{\bar{M}_{q}}\sum}u_{h,l}(t)\phi_{l}, \underset{l=1}{\overset{\bar{M}_{q}}\sum}v_{h,l}(t)\phi_{l}\right)^{t} \in \mathcal{W}_{h}$, for $t\in[0,\text{\,}T]$, where $u_{h,l}(t)$ (resp., $v_{h,l}(t)$), for $l=1,2,...,\bar{M}_{q}$, are the unique components of $u_{h}(t)$ (resp., $v_{h}(t)$), in the orthogonal basis $\mathcal{B}=\{\phi_{l}:\text{\,}l=1,...,\bar{M}_{q}\}$ (according to equation $(\ref{14})$), approximation $(\ref{20a})$ becomes
    \begin{equation*}
     \left(w_{h}^{n+\frac{1}{2}},V\right)_{.}+\frac{\sigma}{4}\left(\alpha.*\Theta(3w_{h}^{n}-w_{h}^{n-\frac{1}{2}}),\Theta V\right)_{\#,.}=\left(w_{h}^{n},V\right)_{.}+\frac{\sigma}{4}\left(3F(t_{n},w_{h}^{n})-F(t_{n-\frac{1}{2}},w_{h}^{n-\frac{1}{2}}),V\right)_{.},
     \end{equation*}
    which is equivalent to
    \begin{equation*}
     \underset{l=1}{\overset{\bar{M}_{q}}\sum}u_{h,l}^{n+\frac{1}{2}}\left(\phi_{l},z\right)+\frac{\alpha_{1}\sigma}{4}\underset{l=1}{\overset{\bar{M}_{q}}\sum}
     (3u_{h,l}^{n}-u_{h,l}^{n-\frac{1}{2}})\left(\phi_{l},z\right)_{\nabla}=\underset{l=1}{\overset{\bar{M}_{q}}\sum}u_{h,l}^{n}\left(\phi_{l},z\right)
     +\frac{\sigma}{4}\left(3F_{1}\left(t_{n},\underset{l=1}{\overset{\bar{M}_{q}}\sum}u_{h,l}^{n}\phi_{l},\underset{l=1}{\overset{\bar{M}_{q}}\sum}v_{h,l}^{n}\phi_{l}\right)-
     \right.
     \end{equation*}
     \begin{equation}\label{21}
     \left.F_{1}\left(t_{n-\frac{1}{2}},\underset{l=1}{\overset{\bar{M}_{q}}\sum}u_{h,l}^{n-\frac{1}{2}}\phi_{l},\underset{l=1}{\overset{\bar{M}_{q}}\sum}
     v_{h,l}^{n-\frac{1}{2}}\phi_{l}\right),z\right),\text{\,\,\,\,\,\,\,\,\,\,}z\in H^{1}(\Omega),
     \end{equation}
     \begin{equation*}
     \underset{l=1}{\overset{\bar{M}_{q}}\sum}v_{h,l}^{n+\frac{1}{2}}\left(\phi_{l},z\right)+\frac{\alpha_{2}\sigma}{4}\underset{l=1}{\overset{\bar{M}_{q}}\sum}
     (3v_{h,l}^{n}-v_{h,l}^{n-\frac{1}{2}})\left(\phi_{l},z\right)_{\nabla}=\underset{l=1}{\overset{\bar{M}_{q}}\sum}v_{h,l}^{n}\left(\phi_{l},z\right)
     +\frac{\sigma}{4}\left(3F_{2}\left(t_{n},\underset{l=1}{\overset{\bar{M}_{q}}\sum}u_{h,l}^{n}\phi_{l},\underset{l=1}{\overset{\bar{M}_{q}}\sum}v_{h,l}^{n}\phi_{l}\right)-
     \right.
     \end{equation*}
     \begin{equation}\label{22}
     \left.F_{2}\left(t_{n-\frac{1}{2}},\underset{l=1}{\overset{\bar{M}_{q}}\sum}u_{h,l}^{n-\frac{1}{2}}\phi_{l},\underset{l=1}{\overset{\bar{M}_{q}}\sum}
     v_{h,l}^{n-\frac{1}{2}}\phi_{l}\right),z\right),\text{\,\,\,\,\,\,\,\,\,\,}z\in H^{1}(\Omega),
     \end{equation}

     Because $\mathcal{B}=\{\phi_{l}:\text{\,}l=1,...,\bar{M}_{q}\}$ is an orthogonal basis of $\mathcal{H}_{h}$, with respect to both scalar products $\left(\cdot,\cdot\right)$ and $\left(\cdot,\cdot\right)_{\nabla}$, that satisfies $\left(\phi_{l},\phi_{j}\right)=\delta_{lj}$ and $\left(\phi_{l},\phi_{j}\right)_{\nabla}=\lambda_{j}\delta_{lj}$, for $l,j=1,...,\bar{M}_{q}$, where $\delta_{lj}=1$, if $l=j$, and $\delta_{lj}=0$, whenever $l\neq j$, and $\lambda_{j}$ are the eigenvalues of the linear operator $A$ defined in equation $(\ref{lp})$. Taking $z=\phi_{j}$, approximations $(\ref{21})$-$(\ref{22})$ imply
     \begin{equation*}
     u_{h,j}^{n+\frac{1}{2}}+\frac{\alpha_{1}\lambda_{j}\sigma}{4}(3u_{h,j}^{n}-u_{h,j}^{n-\frac{1}{2}})=u_{h,j}^{n}
     +\frac{\sigma}{4}\left(3F_{1}\left(t_{n},\underset{l=1}{\overset{\bar{M}_{q}}\sum}u_{h,l}^{n}\phi_{l},\underset{l=1}{\overset{\bar{M}_{q}}\sum}v_{h,l}^{n}\phi_{l}\right)-
     \right.
     \end{equation*}
     \begin{equation}\label{23}
     \left.F_{1}\left(t_{n-\frac{1}{2}},\underset{l=1}{\overset{\bar{M}_{q}}\sum}u_{h,l}^{n-\frac{1}{2}}\phi_{l},\underset{l=1}{\overset{\bar{M}_{q}}\sum}
     v_{h,l}^{n-\frac{1}{2}}\phi_{l}\right),\phi_{j}\right),
     \end{equation}
     \begin{equation*}
     v_{h,j}^{n+\frac{1}{2}}+\frac{\alpha_{2}\lambda_{j}\sigma}{4}(3v_{h,j}^{n}-v_{h,j}^{n-\frac{1}{2}})=v_{h,j}^{n}
     +\frac{\sigma}{4}\left(3F_{2}\left(t_{n},\underset{l=1}{\overset{\bar{M}_{q}}\sum}u_{h,l}^{n}\phi_{l},\underset{l=1}{\overset{\bar{M}_{q}}\sum}v_{h,l}^{n}\phi_{l}\right)-
     \right.
     \end{equation*}
     \begin{equation}\label{24}
     \left.F_{2}\left(t_{n-\frac{1}{2}},\underset{l=1}{\overset{\bar{M}_{q}}\sum}u_{h,l}^{n-\frac{1}{2}}\phi_{l},\underset{l=1}{\overset{\bar{M}_{q}}\sum}
     v_{h,l}^{n-\frac{1}{2}}\phi_{l}\right),\phi_{j}\right),
     \end{equation}
     for $j=1,2,...,\bar{M}_{q}$. Equations $(\ref{21})$-$(\ref{22})$ denote the first stage of the desired algorithm. It is not hard to observe that these formulations works with an explicit numerical method.\\

     The description of the second step of the explicit-implicit numerical scheme requires the integration of equation $(\ref{4})$ from $t_{n+\frac{1}{2}}$ to $t_{n+1}$, and the interpolation of the integrants at points $t_{n+\frac{1}{2}}$ and $t_{n+1}$. Thus, it holds
     \begin{equation*}
     w^{n+1}-w^{n+\frac{1}{2}}-\int_{t_{n+\frac{1}{2}}}^{t_{n+1}}\alpha.*\Lambda wdt=\int_{t_{n+\frac{1}{2}}}^{t_{n+1}}F(t,w)dt.
     \end{equation*}
     This is equivalent to
     \begin{equation}\label{25}
     w^{n+1}-\alpha.*\int_{t_{n+\frac{1}{2}}}^{t_{n+1}}\Lambda wdt=w^{n+\frac{1}{2}}+\int_{t_{n+\frac{1}{2}}}^{t_{n+1}}F(t,w)dt.
     \end{equation}

     But, straightforward computations give
    \begin{equation}\label{26}
    \int_{t_{n+\frac{1}{2}}}^{t_{n+1}}\Lambda w(t)dt=\frac{\sigma}{4}(\Lambda w^{n+1}+\Lambda w^{n+\frac{1}{2}})-\frac{\sigma^{3}}{96}\Lambda w_{2t}(\theta_{3}^{n+1}),
     \end{equation}
     \begin{equation}\label{27}
     \int_{t_{n+\frac{1}{2}}}^{t_{n+1}}F(t,w(t))dt=\frac{\sigma}{4}(F_{n+1}+F_{n+\frac{1}{2}})-\frac{\sigma^{3}}{96}\frac{d^{2}}{dt^{2}}[F(t,w)]
     (\theta_{4}^{n+1}),
     \end{equation}
     where $\theta_{l}^{n+1}\in(t_{n+\frac{1}{2}},t_{n+1})$, for $l=3,4$. Plugging approximations $(\ref{25})$-$(\ref{27})$ to get
     \begin{equation*}
     w^{n+1}-\frac{\sigma}{4}\alpha.*\Lambda(w^{n+1}+w^{n+\frac{1}{2}})=w^{n+\frac{1}{2}}+\frac{\sigma}{4}(F_{n+1}+F_{n+\frac{1}{2}})-
     \frac{\sigma^{3}}{96}\left(\alpha.*\Lambda w_{2t}(\theta_{3}^{n+1})+\frac{d^{2}}{dt^{2}}[F(t,w)](\theta_{4}^{n+1})\right).
     \end{equation*}

    Multiplying both sides of this equation by $V\in[H^{1}(\Omega)]^{2}$, utilizing the bilinear operators $\left(\cdot,\cdot\right)_{.}$ and $\left(\cdot,\cdot\right)_{\#,.}$, and performing direct computations, we obtain
    \begin{equation*}
     \left(w^{n+1},V\right)_{.}+\frac{\sigma}{4}\left(\alpha.*\Theta(w^{n+1}+w^{n+\frac{1}{2}}),\Theta V\right)_{\#,.}=\left(w^{n+\frac{1}{2}},V\right)_{.}+ \frac{\sigma}{4}\left(F_{n+1}+F_{n+\frac{1}{2}},V\right)_{.}-
     \end{equation*}
   \begin{equation}\label{27a}
   \frac{\sigma^{3}}{96}\left(\alpha.*\Lambda w_{2t}(\theta_{3}^{n+1})+\frac{d^{2}}{dt^{2}}[F(t,w)](\theta_{4}^{n+1}),V\right)_{.}.
     \end{equation}

     Truncating the error term: $-\frac{\sigma^{3}}{96}\left(\alpha.*\Lambda w_{2t}(\theta_{3}^{n+1})+\frac{d^{2}}{dt^{2}}[F(t,w)](\theta_{4}^{n+1}),V\right)_{.}$, replacing the analytical solution $w\in H^{3}(0,T;\text{\,}[H^{q+1}(\Omega)]^{2})$, with the approximate one $w_{h}(t)=(u_{h}(t),v_{h}(t))^{t}= \left(\underset{l=1}{\overset{\bar{M}_{q}}\sum}u_{h,l}(t)\phi_{l},\underset{l=1}{\overset{\bar{M}_{q}}\sum}v_{h,l}(t)\phi_{l}\right)^{t} \in \mathcal{W}_{h}$, to obtain
    \begin{equation*}
     \underset{l=1}{\overset{\bar{M}_{q}}\sum}u_{h,l}^{n+1}\left(\phi_{l},z\right)+\frac{\alpha_{1}\sigma}{4}\underset{l=1}{\overset{\bar{M}_{q}}\sum}
     (u_{h,l}^{n+1}+u_{h,l}^{n+\frac{1}{2}})\left(\phi_{l},z\right)_{\nabla}=\underset{l=1}{\overset{\bar{M}_{q}}\sum}u_{h,l}^{n+\frac{1}{2}}\left(\phi_{l},z\right)
     +\frac{\sigma}{4}\left(F_{1}\left(t_{n+1},\underset{l=1}{\overset{\bar{M}_{q}}\sum}u_{h,l}^{n+1}\phi_{l},\underset{l=1}{\overset{\bar{M}_{q}}\sum}v_{h,l}^{n+1}\phi_{l}\right)+
     \right.
     \end{equation*}
     \begin{equation}\label{28}
     \left.F_{1}\left(t_{n+\frac{1}{2}},\underset{l=1}{\overset{\bar{M}_{q}}\sum}u_{h,l}^{n+\frac{1}{2}}\phi_{l},\underset{l=1}{\overset{\bar{M}_{q}}\sum}
     v_{h,l}^{n+\frac{1}{2}}\phi_{l}\right),z\right),
     \end{equation}
     \begin{equation*}
     \underset{l=1}{\overset{\bar{M}_{q}}\sum}v_{h,l}^{n+1}\left(\phi_{l},z\right)+\frac{\alpha_{2}\sigma}{4}\underset{l=1}{\overset{\bar{M}_{q}}\sum}
     (v_{h,l}^{n+1}+v_{h,l}^{n+\frac{1}{2}})\left(\phi_{l},z\right)_{\nabla}=\underset{l=1}{\overset{\bar{M}_{q}}\sum}v_{h,l}^{n+\frac{1}{2}}\left(\phi_{l},z\right)
     +\frac{\sigma}{4}\left(F_{2}\left(t_{n+1},\underset{l=1}{\overset{\bar{M}_{q}}\sum}u_{h,l}^{n+1}\phi_{l},\underset{l=1}{\overset{\bar{M}_{q}}\sum}v_{h,l}^{n+1}\phi_{l}\right)+
     \right.
     \end{equation*}
     \begin{equation}\label{29}
     \left.F_{2}\left(t_{n+\frac{1}{2}},\underset{l=1}{\overset{\bar{M}_{q}}\sum}u_{h,l}^{n+\frac{1}{2}}\phi_{l},\underset{l=1}{\overset{\bar{M}_{q}}\sum}
     v_{h,l}^{n+\frac{1}{2}}\phi_{l}\right),z\right).
     \end{equation}

      Taking $z=\phi_{j}$, and utilizing the fact that $\mathcal{B}=\{\phi_{l}:\text{\,}l=1,...,\bar{M}_{q}\}$ is an orthonormal basis, with respect to the scalar product $\left(\cdot,\cdot\right)$ and an orthogonal basis for the inner product $\left(\cdot,\cdot\right)_{\nabla}$, both equations $(\ref{28})$-$(\ref{29})$ become
     \begin{equation*}
     u_{h,j}^{n+1}+\frac{\alpha_{1}\lambda_{j}\sigma}{4}(u_{h,j}^{n+1}+u_{h,j}^{n+\frac{1}{2}})=u_{h,j}^{n+\frac{1}{2}}
     +\frac{\sigma}{4}\left(F_{1}\left(t_{n+1},\underset{l=1}{\overset{\bar{M}_{q}}\sum}u_{h,l}^{n+1}\phi_{l},\underset{l=1}{\overset{\bar{M}_{q}}\sum}v_{h,l}^{n+1}\phi_{l}\right)+
     \right.
     \end{equation*}
     \begin{equation}\label{30}
     \left.F_{1}\left(t_{n+\frac{1}{2}},\underset{l=1}{\overset{\bar{M}_{q}}\sum}u_{h,l}^{n+\frac{1}{2}}\phi_{l},\underset{l=1}{\overset{\bar{M}_{q}}\sum}
     v_{h,l}^{n+\frac{1}{2}}\phi_{l}\right),\phi_{j}\right),
     \end{equation}
     \begin{equation*}
     v_{h,j}^{n+1}+\frac{\alpha_{2}\lambda_{j}\sigma}{4}(v_{h,j}^{n+1}+v_{h,j}^{n+\frac{1}{2}})=v_{h,j}^{n+\frac{1}{2}}
     +\frac{\sigma}{4}\left(F_{2}\left(t_{n+1},\underset{l=1}{\overset{\bar{M}_{q}}\sum}u_{h,l}^{n+1}\phi_{l},\underset{l=1}{\overset{\bar{M}_{q}}\sum}v_{h,l}^{n+1}\phi_{l}\right)+
     \right.
     \end{equation*}
     \begin{equation}\label{31}
     \left.F_{2}\left(t_{n+\frac{1}{2}},\underset{l=1}{\overset{\bar{M}_{q}}\sum}u_{h,l}^{n+\frac{1}{2}}\phi_{l},\underset{l=1}{\overset{\bar{M}_{q}}\sum}
     v_{h,l}^{n+\frac{1}{2}}\phi_{l}\right),\phi_{j}\right),
     \end{equation}
     for $j=1,2,...,\bar{M}_{q}$. Equations $(\ref{30})$-$(\ref{31})$ represents the second stage of the constructed computational approach. It is easy to see that these formulations works with an implicit numerical scheme.\\

   To start the proposed approach given by equations $(\ref{23})$-$(\ref{24})$ and $(\ref{30})$-$(\ref{31})$, the terms $u_{h,l}^{0}$, $v_{h,l}^{0}$, $u_{h,l}^{\frac{1}{2}}$ and $v_{h,l}^{\frac{1}{2}}$, for $l=1,2,...,\bar{M}_{q}$, are needed. However, the terms $u_{h,l}^{0}$ and $v_{h,l}^{0}$, can be directly obtained from the initial condition $(\ref{2})$ and using the $L^{2}$-projection defined by equation $(\ref{16})$. That is,
   \begin{equation}\label{32}
   u_{h,l}^{0}=(p_{h}u_{0})_{l},\text{\,\,\,\,}v_{h,l}^{0}=(p_{h}v_{0})_{l}, \text{\,\,\,for\,\,\,\,} l=1,2,...,\bar{M}_{q}.
     \end{equation}

   Furthermore, the application of the Taylor series gives
     \begin{equation*}
   w^{\frac{1}{2}}=w_{0}+\frac{\sigma}{2}w_{t}^{0}+\frac{\sigma^{2}}{8}w_{2t}(\theta_{0}),
     \end{equation*}
   where $\theta_{0}\in(0,\frac{\sigma}{2})$. Using the initial condition $(\ref{5})$, this is equivalent to
   \begin{equation}\label{33a}
   w^{\frac{1}{2}}=w_{0}+\frac{\sigma}{2}w_{1}+\frac{\sigma^{2}}{8}w_{2t}(\theta_{0}).
     \end{equation}

     Tracking the error term $\frac{\sigma^{2}}{8}w_{2t}(\theta_{0})$, this provides
     \begin{equation}\label{33b}
   \tilde{w}^{\frac{1}{2}}=w_{0}+\frac{\sigma}{2}w_{1}=(u_{0}+\frac{\sigma}{2}u_{1},v_{0}+\frac{\sigma}{2}v_{1})^{t}.
     \end{equation}

   Takes
   \begin{equation}\label{33c}
   u_{h,j}^{\frac{1}{2}}=(p_{h}(u_{0}+\frac{\sigma}{2}u_{1}))_{j}, \text{\,\,\,}v_{h,j}^{\frac{1}{2}}=(p_{h}(v_{0}+\frac{\sigma}{2}v_{1}))_{j},\text{\,\,\,for\,\,\,\,} j=1,2,...,\bar{M}_{q}.
     \end{equation}

   A combination of equations $(\ref{23})$-$(\ref{24})$, $(\ref{30})$-$(\ref{32})$ and $(\ref{33c})$, provides the desired strong second-order two-stage explicit/implicit approach with spectral orthogonal basis Galerkin FEM for simulating a two-dimensional Gray-Scott problem $(\ref{1})$ subject to initial condition $(\ref{2})$ and boundary conditions $(\ref{3})$. Moreover, for $n=1,2,...,N-1$, and $j=1,2,...,\bar{M}_{q}$,
    \begin{equation*}
     u_{h,j}^{n+\frac{1}{2}}+\frac{\alpha_{1}\lambda_{j}\sigma}{4}(3u_{h,j}^{n}-u_{h,j}^{n-\frac{1}{2}})=u_{h,j}^{n}
     +\frac{\sigma}{4}\left(3F_{1}\left(t_{n},\underset{l=1}{\overset{\bar{M}_{q}}\sum}u_{h,l}^{n}\phi_{l},\underset{l=1}{\overset{\bar{M}_{q}}\sum}v_{h,l}^{n}\phi_{l}\right)-
     \right.
     \end{equation*}
     \begin{equation}\label{s1}
     \left.F_{1}\left(t_{n-\frac{1}{2}},\underset{l=1}{\overset{\bar{M}_{q}}\sum}u_{h,l}^{n-\frac{1}{2}}\phi_{l},\underset{l=1}{\overset{\bar{M}_{q}}\sum}
     v_{h,l}^{n-\frac{1}{2}}\phi_{l}\right),\phi_{j}\right),
     \end{equation}
     \begin{equation*}
     v_{h,j}^{n+\frac{1}{2}}+\frac{\alpha_{2}\lambda_{j}\sigma}{4}(3v_{h,j}^{n}-v_{h,j}^{n-\frac{1}{2}})=v_{h,j}^{n}
     +\frac{\sigma}{4}\left(3F_{2}\left(t_{n},\underset{l=1}{\overset{\bar{M}_{q}}\sum}u_{h,l}^{n}\phi_{l},\underset{l=1}{\overset{\bar{M}_{q}}\sum}v_{h,l}^{n}\phi_{l}\right)-
     \right.
     \end{equation*}
     \begin{equation}\label{s2}
     \left.F_{2}\left(t_{n-\frac{1}{2}},\underset{l=1}{\overset{\bar{M}_{q}}\sum}u_{h,l}^{n-\frac{1}{2}}\phi_{l},\underset{l=1}{\overset{\bar{M}_{q}}\sum}
     v_{h,l}^{n-\frac{1}{2}}\phi_{l}\right),\phi_{j}\right),
     \end{equation}
    \begin{equation*}
     u_{h,j}^{n+1}+\frac{\alpha_{1}\lambda_{j}\sigma}{4}(u_{h,j}^{n+1}+u_{h,j}^{n+\frac{1}{2}})=u_{h,j}^{n+\frac{1}{2}}
     +\frac{\sigma}{4}\left(F_{1}\left(t_{n+1},\underset{l=1}{\overset{\bar{M}_{q}}\sum}u_{h,l}^{n+1}\phi_{l},\underset{l=1}{\overset{\bar{M}_{q}}\sum}v_{h,l}^{n+1}\phi_{l}\right)+
     \right.
     \end{equation*}
     \begin{equation}\label{s3}
     \left.F_{1}\left(t_{n+\frac{1}{2}},\underset{l=1}{\overset{\bar{M}_{q}}\sum}u_{h,l}^{n+\frac{1}{2}}\phi_{l},\underset{l=1}{\overset{\bar{M}_{q}}\sum}
     v_{h,l}^{n+\frac{1}{2}}\phi_{l}\right),\phi_{j}\right),
     \end{equation}
     \begin{equation*}
     v_{h,j}^{n+1}+\frac{\alpha_{2}\lambda_{j}\sigma}{4}(v_{h,j}^{n+1}+v_{h,j}^{n+\frac{1}{2}})=v_{h,j}^{n+\frac{1}{2}}
     +\frac{\sigma}{4}\left(F_{2}\left(t_{n+1},\underset{l=1}{\overset{\bar{M}_{q}}\sum}u_{h,l}^{n+1}\phi_{l},\underset{l=1}{\overset{\bar{M}_{q}}\sum}v_{h,l}^{n+1}\phi_{l}\right)+
     \right.
     \end{equation*}
     \begin{equation}\label{s4}
     \left.F_{2}\left(t_{n+\frac{1}{2}},\underset{l=1}{\overset{\bar{M}_{q}}\sum}u_{h,l}^{n+\frac{1}{2}}\phi_{l},\underset{l=1}{\overset{\bar{M}_{q}}\sum}
     v_{h,l}^{n+\frac{1}{2}}\phi_{l}\right),\phi_{j}\right),
     \end{equation}
   subject to initial conditions
    \begin{equation}\label{s5}
    u_{h,j}^{0}=(p_{h}u_{0})_{j},\text{\,\,}v_{h,j}^{0}=(p_{h}v_{0})_{j},\text{\,\,}
    u_{h,j}^{\frac{1}{2}}=(p_{h}(u_{0}+\frac{\sigma}{2}u_{1}))_{j}, \text{\,\,}v_{h,j}^{\frac{1}{2}}=(p_{h}(v_{0}+\frac{\sigma}{2}v_{1}))_{j},
     \end{equation}
   and Neumann boundary conditions
   \begin{equation}\label{s6}
    [[u_{h,j}^{n}]]=0,\text{\,\,\,\,}[[v_{h,j}^{n}]]=0,\text{\,\,\,\,\,\,\,on\,\,\,\,}\Gamma,
     \end{equation}
   for $n=1,...,N,$ and $j=1,2,...,\bar{M}_{q}$.

     \section{Stability analysis and error estimates}\label{sec3}
     This section analyzes the stability and the error estimates of the proposed computational technique $(\ref{s1})$-$(\ref{s6})$ for simulating the two-dimensional Gray-Scott equations $(\ref{1})$ with initial-boundary conditions $(\ref{2})$-$(\ref{3})$.

     \begin{theorem}\label{t1} (Strong stability analysis).
     Let $w_{h}(t)=(u_{h}(t),v_{h}(t))^{t}\in\mathcal{W}_{h}$, for $0\leq t\leq T$, be the numerical solution obtained from the two-step explicit/implicit approach with spectral orthogonal basis GFEM $(\ref{s1})$-$(\ref{s6})$. For small values of the time step $\sigma$, the developed strategy is strongly stable. Furthermore, it holds
     \begin{equation}\label{37}
     \underset{1\leq s\leq N}{\max}\|u_{h}^{s}\|\leq C_{0}\sqrt{\bar{M}_{q}},\text{\,\,\,\,}\underset{1\leq s\leq N}{\max}\|v_{h}^{s}\|\leq C_{0}\sqrt{\bar{M}_{q}},
     \end{equation}
     where $s$ varies in the range: $0,\frac{1}{2},1,...,N$, and $C_{0}$ is a positive constant independent of the time step $\sigma$.
     \end{theorem}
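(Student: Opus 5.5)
The plan is to work componentwise in the spectral basis $\mathcal{B}$. Because $\mathcal{B}$ diagonalizes both $(\cdot,\cdot)$ and $(\cdot,\cdot)_{\nabla}$, the scheme $(\ref{s1})$-$(\ref{s4})$ decouples in its linear part into scalar recursions for each mode $j$. For the $u$-component set $a_j=\alpha_1\lambda_j\sigma/4$ and $f_j^{s}=(F_1(t_s,u_h^s,v_h^s),\phi_j)$. The two stages then read
\begin{equation*}
u_{h,j}^{n+\frac12}=(1-3a_j)u_{h,j}^{n}+a_j u_{h,j}^{n-\frac12}+\tfrac{\sigma}{4}(3f_j^{n}-f_j^{n-\frac12}),
\end{equation*}
\begin{equation*}
(1+a_j)u_{h,j}^{n+1}=(1-a_j)u_{h,j}^{n+\frac12}+\tfrac{\sigma}{4}(f_j^{n+1}+f_j^{n+\frac12}).
\end{equation*}
The $v$-component has the same structure. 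I would eliminate the half step and write one step as a linear map on the pair $(u_{h,j}^{n+1},u_{h,j}^{n+\frac12})$ driven by the pair at level $n$, plus source terms of size $O(\sigma)$.

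The first key step is to bound the amplification matrix $G(a_j)$ of this two-level recursion. Here the claimed balancing takes place: the explicit factor $(1-3a_j)$ can be large, while the implicit factor $(1-a_j)/(1+a_j)$ has modulus less than $1$. I would compute $G(a)$ explicitly and look for a uniform bound $\|G(a)\|\le 1+C\sigma$ valid for all $a\ge 0$. This is the main obstacle. For large $a_j$ the explicit stage alone is not contractive, so a clean estimate may need a weighted norm on the pair, or else the restriction to small $\sigma$ together with the finite-dimensional bound $\lambda_j\le\lambda_{\max}(h)$.

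My fallback is to accept a bound of the form $|u_{h,j}^{s}|\le C$ per mode, with a constant independent of $\sigma$ (obtained for $\sigma$ small). The $\sqrt{\bar M_q}$ factor in $(\ref{37})$ then arises naturally from Parseval: $\|u_h^s\|^2=\sum_{j=1}^{\bar M_q}|u_{h,j}^s|^2\le C_0^2\bar M_q$. This strongly suggests the intended route is a per-mode bound followed by summation.

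The second key step handles the nonlinear sources. By Cauchy--Schwarz and orthonormality, $|f_j^{s}|\le\|F_1(t_s,u_h^s,v_h^s)\|$. The cubic term $u v^2$ would be controlled by an inductive (bootstrap) hypothesis that $u_h,v_h$ stay bounded at earlier levels. The implicit stage needs a fixed-point argument, valid for $\sigma$ small, to guarantee solvability; it contributes $\frac{\sigma}{4}|f_j^{n+1}|$ on the right. With these source bounds, summing the one-step inequality and applying a discrete Gronwall lemma gives $\max_s(|u_{h,j}^{s}|+|v_{h,j}^{s}|)\le C(|u_{h,j}^{0}|+|u_{h,j}^{\frac12}|+|v_{h,j}^{0}|+|v_{h,j}^{\frac12}|+T)$. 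The initial coefficients are bounded via $(\ref{s5})$ by $\|u_0\|+\|u_1\|$ and the analogous $v$ terms, since $p_h$ is an $L^2$-contraction. Closing the bootstrap for small $\sigma$ yields a constant $C_0$ independent of $\sigma$. Summing over $j$ as above then gives $(\ref{37})$.
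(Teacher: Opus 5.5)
Your route differs from the paper's. The paper rewrites (\ref{s1})--(\ref{s4}) as the multistep schemes (\ref{35})--(\ref{36}) for $\bar W'=\bar F(t,\bar W)$, with the diffusion folded into $\bar F$. It then checks the root condition for $p(\lambda)=\lambda(\lambda-1)$ and $\bar p(\lambda)=\lambda^{2}(\lambda-1)$, reads off a per-mode bound $C_0$, and finishes with the same Parseval step as you. Your explicit amplification analysis is more informative, but the step you flag must be settled, and your first option cannot work. The matrix $G(a)=(r,1)^{t}(1-3a,\,a)$ is rank one, so $G(a)^{k}=\mu(a)^{k-1}G(a)$ with $\mu(a)=r(1-3a)+a=(1-3a+4a^{2})/(1+a)>0$. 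Since $\mu(a)\le1$ iff $a\le1$, and $\mu(a)\sim4a$ as $a\to\infty$, no norm gives $\|G(a)\|\le1+C\sigma$ for all $a\ge0$ with $C$ independent of $\lambda_j$; on stiff modes the implicit stage does not compensate the explicit one. You need the cap $\lambda_j\le\lambda_{\max}(h)$, used in one of two ways (with $\alpha_{\max}=\max_l\alpha_l$).
First, $\mu(a)-1=4a(a-1)/(1+a)\le\alpha_{\max}\lambda_{\max}\sigma$ gives $\|G(a)^{n}\|\le\sqrt2(1+\alpha_{\max}\lambda_{\max}T)e^{\alpha_{\max}\lambda_{\max}T}$ for $n\sigma\le T$. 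This is $\sigma$-independent but exponentially $h$-dependent, and it is all the root-condition argument really yields, since the Lipschitz constant of $\bar F$ contains $\alpha_{\max}\lambda_{\max}$.
Second, the restriction $\sigma\alpha_{\max}\lambda_{\max}\le4$, i.e.\ $\sigma=O(h^{2})$, gives $\|G(a)^{k}\|\le\sqrt{10}$.

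The genuine gap is closing the bootstrap, which taking $\sigma$ small cannot achieve. Each $f_j^{s}$ involves all modes, so the induction hypothesis must be global, e.g.\ $\|u_h^{s}\|_{L^\infty},\|v_h^{s}\|_{L^\infty}\le K$ at earlier levels. The estimate should then be run on the whole coefficient vector (Bessel: $\sum_j|f_j^{s}|^{2}\le\|F_1(t_s,u_h^{s},v_h^{s})\|^{2}$), not mode by mode. Under that hypothesis the cubic term gives sources of size $K^{3}$, or a Lipschitz constant of size $K^{2}$. Your Duhamel/Gronwall bound is therefore of the form $C(\text{data},h)(1+K^{3}T)$ or $C(\text{data},h)e^{cK^{2}T}$. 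Demanding that it be $\le K$ (after an inverse inequality back to $L^\infty$) restricts $T$, not $\sigma$. As for $y'=y^{3}$, this only gives boundedness on a short interval $[0,T_{*}]$ that does not grow as $\sigma\to0$.

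What is missing is an a priori bound not derived from the iterates themselves. One option is to assume (or prove) that the semi-discrete system $\bar W'=\bar F(t,\bar W)$ has a solution bounded by $K_0$ on $[0,T]$. Another is to truncate $F$ away from the range of the bounded exact solution and remove the truncation afterwards via the error estimate and an inverse inequality. On the ball of radius $2K_0$ the Lipschitz constant is then fixed. Consistency of both stages plus the power bound above give $|\bar W_h^{n}-\bar W(t_n)|\to0$ as $\sigma\to0$. Hence the iterates stay in the ball, the implicit stage is uniquely solvable by contraction, and Parseval concludes. The paper's proof silently relies on the same missing ingredient: zero-stability controls the iterates only for a Lipschitz right-hand side or near a bounded reference solution. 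So this step has to be supplied rather than asserted.
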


     \begin{proof}
     Set $\psi_{l}=(\phi_{l},\phi_{l})^{t}$, for $l=1,2,...,\bar{M}_{q}$. Since $w_{h}^{s}=(u_{h}^{s},v_{h}^{s})^{t}\in\mathcal{W}_{h}$, $\alpha=(\alpha_{1},\alpha_{2})$,
     $F\left(t_{s},\underset{l=1}{\overset{\bar{M}_{q}}\sum}w_{h,l}^{s}.*\psi_{l}\right)=\\
     F\left(t_{s},\underset{l=1}{\overset{\bar{M}_{q}}\sum}u_{h,l}^{s}\phi_{l},\underset{l=1}{\overset{\bar{M}_{q}}\sum}v_{h,l}^{s}\phi_{l}\right)=
     \left(F_{1}\left(t_{s},\underset{l=1}{\overset{\bar{M}_{q}}\sum}u_{h,l}^{s}\phi_{l},\underset{l=1}{\overset{\bar{M}_{q}}\sum}v_{h,l}^{s}\phi_{l}\right),
     F_{2}\left(t_{s},\underset{l=1}{\overset{\bar{M}_{q}}\sum}u_{h,l}^{s}\phi_{l},\underset{l=1}{\overset{\bar{M}_{q}}\sum}v_{h,l}^{s}\phi_{l}\right)\right)$, combining approximations $(\ref{s1})$-$(\ref{s4})$, and rearranging terms, we obtain
     \begin{equation*}
     w_{h,j}^{n+\frac{1}{2}}-w_{h,j}^{n}+0 w_{h,j}^{n-\frac{1}{2}}=\frac{\sigma}{4}\left[-\lambda_{j}\alpha.*(3w_{h,j}^{n}-w_{h,j}^{n-\frac{1}{2}})+
     \left(3F\left(t_{n},\underset{l=1}{\overset{\bar{M}_{q}}\sum}w_{h,l}^{n}.*\psi_{l}\right)-
     \right.\right.
     \end{equation*}
     \begin{equation}\label{33}
     \left.\left.F\left(t_{n-\frac{1}{2}},\underset{l=1}{\overset{\bar{M}_{q}}\sum}w_{h,l}^{n-\frac{1}{2}}.*\psi_{l}\right),\psi_{j}\right)_{.}\right],
     \end{equation}
    \begin{equation*}
     w_{h,j}^{n+1}-w_{h,j}^{n+\frac{1}{2}}+0w_{h,j}^{n}+0w_{h,j}^{n-\frac{1}{2}}=\frac{\sigma}{4}\left[-\lambda_{j}\alpha.*(w_{h,j}^{n+1}+w_{h,j}^{n+\frac{1}{2}})
     +\left(F\left(t_{n+1},\underset{l=1}{\overset{\bar{M}_{q}}\sum}w_{h,l}^{n+1}.*\psi_{l}\right)+
     \right.\right.
     \end{equation*}
     \begin{equation}\label{34}
     \left.\left.F\left(t_{n+\frac{1}{2}},\underset{l=1}{\overset{\bar{M}_{q}}\sum}w_{h,l}^{n+\frac{1}{2}}.*\psi_{l}\right),\psi_{j}\right)_{.}\right],
     \end{equation}
     for $j=1,2,...,\bar{M}_{q}$. Setting $\bar{W}_{h}^{s}=[w_{h,1}^{s},w_{h,2}^{s},...,w_{h,\bar{M}_{q}}^{s}]^{t}$ and
     \begin{equation*}
      \bar{F}(t_{s},\bar{W}_{h}^{s})=[-\lambda_{1}\alpha.*w_{h,1}^{s}+\left(F\left(t_{s},\underset{l=1}{\overset{\bar{M}_{q}}\sum}w_{h,l}^{s}.*\psi_{l}\right),\psi_{1}\right)_{.},...,
      -\lambda_{\bar{M}_{q}}\alpha.*w_{h,\bar{M}_{q}}^{s}+\left(F\left(t_{s},\underset{l=1}{\overset{\bar{M}_{q}}\sum}w_{h,l}^{s}.*\psi_{l}\right),\psi_{\bar{M}_{q}}\right)_{.}]^{t},
        \end{equation*}
        for $s\in\{n-\frac{1}{2},n,n+\frac{1}{2},n+1\}$, equations $(\ref{33})$ and $(\ref{34})$ become
     \begin{equation}\label{35}
     \bar{W}_{h}^{n+\frac{1}{2}}-\bar{W}_{h}^{n}+0 \bar{W}_{h}^{n-\frac{1}{2}}=\frac{\sigma}{4}(3\bar{F}(t_{n},\bar{W}_{h}^{n})-\bar{F}(t_{n-\frac{1}{2}},\bar{W}_{h}^{n-\frac{1}{2}})),
     \end{equation}
      \begin{equation}\label{36}
     \bar{W}_{h}^{n+1}-\bar{W}_{h}^{n+\frac{1}{2}}+0 \bar{W}_{h}^{n}+0\bar{W}_{h}^{n-\frac{1}{2}}=\frac{\sigma}{4}(\bar{F}(t_{n+1},\bar{W}_{h}^{n+1})+\bar{F}(t_{n+\frac{1}{2}},\bar{W}_{h}^{n+\frac{1}{2}})+
     0\bar{F}(t_{n},\bar{W}_{h}^{n})+0\bar{F}(t_{n-\frac{1}{2}},\bar{W}_{h}^{n-\frac{1}{2}})).
     \end{equation}

     It is not difficult to observe that equation $(\ref{35})$ is an explicit two-step method whereas equation $(\ref{36})$ is an implicit three-step method. The left and right characteristic polynomials of equation $(\ref{35})$ are given by
     \begin{equation*}
      p(\lambda)=\lambda^{2}-\lambda=\lambda(\lambda-1)\text{\,\,\,and\,\,\,}q(\lambda)=3\lambda-1,
        \end{equation*}
      while the left and right characteristic polynomials associated with equation $(\ref{36})$ are given by
      \begin{equation*}
      \bar{p}(\lambda)=\lambda^{3}-\lambda^{2}=\lambda^{2}(\lambda-1)\text{\,\,\,and\,\,\,}\bar{q}(\lambda)=\lambda^{3}+\lambda^{2}=\lambda^{2}(\lambda+1),
        \end{equation*}
        where we set $\lambda=\mu^{\frac{1}{2}}$, $\mu$ is a complex number. The roots of $p(\lambda)$ are $\lambda_{1}=1$ with multiplicity equals $1$ and $\lambda_{2}=0$ with multiplicity equals $1$, whereas that of $q(\lambda)$ is $\lambda_{3}=\frac{1}{3}$ with multiplicity equals $1$. Thus, the difference scheme $(\ref{35})$ satisfies the root condition. Therefore, this scheme is strongly stable. In addition, The roots of $\bar{p}(\lambda)$ (respectively, $\bar{q}(\lambda)$) are $\lambda_{1}=1$ with multiplicity equals $1$ and $\lambda_{2}=0$ with multiplicity equals $2$ (respectively, $\lambda_{1}=-1$ with multiplicity equals $1$ and $\lambda_{2}=0$ with multiplicity equals $2$). Also in this case, the numerical method $(\ref{36})$ satisfies the root condition and it is strongly stable.\\

        Therefore, for small values of the time step $\sigma$, the two-stage explicit/implicit method with spectral orthogonal basis Galerkin finite element method $(\ref{s1})$-$(\ref{s6})$ is strongly stable. As a result, there is a positive constant $C_{0}$ independent of the time step $\sigma$ so that
        \begin{equation*}
      \underset{0\leq s\leq N}{\max}\underset{1\leq j\leq \bar{M}_{q}}{\max}(|u_{h,j}^{s}|+|v_{h,j}^{s}|)\leq C_{0}.
        \end{equation*}

        Since $u_{h}^{s}=\underset{j=1}{\overset{\bar{M}_{q}}\sum}u_{h,j}^{s}\phi_{j}$, $v_{h}^{s}=\underset{j=1}{\overset{\bar{M}_{q}}\sum}u_{h,j}^{s}\phi_{i}$, and
        $\mathcal{B}=\{\phi_{j},\text{\,}j=1,2,...,\bar{M}_{q}\}$, is an orthonormal basis of $\mathcal{H}_{h}$ with respect to the scalar product $\left(.,.\right)$, so
        \begin{equation*}
      \|u_{h}^{s}\|=\left(\underset{j=1}{\overset{\bar{M}_{q}}\sum}|u_{h,j}^{s}|^{2}\right)^{\frac{1}{2}}\leq (\bar{M}_{q}C_{0}^{2})^{\frac{1}{2}}=\sqrt{\bar{M}_{q}}C_{0},\text{\,\,\,}\|v_{h}^{s}\|=\left(\underset{j=1}{\overset{\bar{M}_{q}}\sum}|v_{h,j}^{s}|^{2}\right)^{\frac{1}{2}}\leq (\bar{M}_{q}C_{0}^{2})^{\frac{1}{2}}=\sqrt{\bar{M}_{q}}C_{0}.
        \end{equation*}

      Taking the maximum over $s$, for $s=0,\frac{1}{2},1,...,N$, this ends the proof of Theorem $\ref{t1}$.
     \end{proof}

     \begin{theorem}\label{t2} (Error estimates).
       Suppose that $w=(u,v)^{t}\in H^{3}(0,T;\text{\,}[H^{q+1}(\Omega)]^{2})\cap H^{3}(0,T;\text{\,}\mathcal{W})$ (where $q\geq2$ is an integer) is the exact solution of the initial-boundary value problem $(\ref{1})$-$(\ref{3})$, and let $w_{h}(t)=(u_{h}(t),v_{h}(t))^{t}\in\mathcal{W}_{h}$, for $0\leq t\leq T$, be the approximate solution provided by the proposed two-stage explicit/implicit computational technique $(\ref{s1})$-$(\ref{s6})$. Set $e_{h}^{s}=w^{s}-w_{h}^{s}=(u^{s}-u_{h}^{s},v^{s}-v_{h}^{s})^{t}=(e_{uh}^{s},e_{vh}^{s})^{t}$, be the error at time $t_{s}$, for $s\in\{n-\frac{1}{2},n,n+\frac{1}{2},n+1\}$, thus it holds
     \begin{equation*}
     \|e_{uh}^{n+1}\|^{2}+\|e_{uh}^{n+\frac{1}{2}}\|^{2}+\frac{\sigma}{4}\underset{\leq l\leq 2}{\min}\{\alpha_{l}\}\left[\underset{s=\frac{1}{2}}{\overset{n}\sum}
     (3\|e_{uh}^{s}\|_{\nabla}^{2}+\|e_{uh}^{s+1}+e_{uh}^{s+\frac{1}{2}}\|_{\nabla}^{2}+\|e_{uh}^{s}-e_{uh}^{s-\frac{1}{2}}\|_{\nabla}^{2})+\right.
    \end{equation*}
    \begin{equation*}
     \left.\|e_{uh}^{n+1}\|_{\nabla}^{2}+\|e_{uh}^{n}\|_{\nabla}^{2}\right]\leq\exp(3T(1+20C_{F}^{2}))\left[2\sqrt{C_{1}}(\|u_{0}+\frac{\sigma}{2}u_{1}\|+
     \|v_{0}+\frac{\sigma}{2}v_{1}\|+\frac{1}{8}\||w_{2t}|\|_{\infty})+\right.
    \end{equation*}
     \begin{equation*}
    \left.\frac{\sqrt{C_{1}\sigma}}{2}(\underset{1\leq l\leq2}{\max}\{\alpha_{l}\}+(\frac{1}{2}+18C_{F}^{2})h^{2})^{\frac{1}{2}}(\|u_{0}\|_{q+1}+
    \|v_{0}\|_{q+1})+\sqrt{39T}\||\alpha.*\Lambda w_{2t}+\frac{d^{2}}{dt^{2}}(F(t,w))|\|_{\infty}\right]^{2}(\sigma^{2}+h^{q})^{2},
     \end{equation*}
     \begin{equation*}
     \|e_{vh}^{n+1}\|^{2}+\|e_{vh}^{n+\frac{1}{2}}\|^{2}+\frac{\sigma}{4}\underset{\leq l\leq 2}{\min}\{\alpha_{l}\}\left[\underset{s=\frac{1}{2}}{\overset{n}\sum}
     (3\|e_{vh}^{s}\|_{\nabla}^{2}+\|e_{vh}^{s+1}+e_{vh}^{s+\frac{1}{2}}\|_{\nabla}^{2}+\|e_{vh}^{s}-e_{vh}^{s-\frac{1}{2}}\|_{\nabla}^{2})+\right.
    \end{equation*}
    \begin{equation*}
     \left.\|e_{vh}^{n+1}\|_{\nabla}^{2}+\|e_{vh}^{n}\|_{\nabla}^{2}\right]\leq\exp(3T(1+20C_{F}^{2}))\left[2\sqrt{C_{1}}(\|u_{0}+\frac{\sigma}{2}u_{1}\|+
     \|v_{0}+\frac{\sigma}{2}v_{1}\|+\frac{1}{8}\||w_{2t}|\|_{\infty})+\right.
    \end{equation*}
     \begin{equation*}
    \left.\frac{\sqrt{C_{1}\sigma}}{2}(\underset{1\leq l\leq2}{\max}\{\alpha_{l}\}+(\frac{1}{2}+18C_{F}^{2})h^{2})^{\frac{1}{2}}(\|u_{0}\|_{q+1}+
    \|v_{0}\|_{q+1})+\sqrt{39T}\||\alpha.*\Lambda w_{2t}+\frac{d^{2}}{dt^{2}}(F(t,w))|\|_{\infty}\right]^{2}(\sigma^{2}+h^{q})^{2},
     \end{equation*}
    for $n=0,1,...,N-1$, where $C_{F}$ and $C_{1}$ are positive constants independent of the space step $h$ and time step $\sigma$, $\|\cdot\|_{q+1}$ means the $H^{q+1}$-norm, $\||z|\|_{\infty}=\underset{0\leq t\leq T}{\sup}\|z(t)\|$, for every function $z\in L^{\infty}(0,T;\text{\,}[L^{2}(\Omega)]^{2})$.
       \end{theorem}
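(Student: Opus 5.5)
The plan is an energy argument on the error equations. I split the error in the usual way, $e_{h}^{s}=(w^{s}-P_{h}w^{s})+(P_{h}w^{s}-w_{h}^{s})=:\eta^{s}+\xi_{h}^{s}$. The projection part $\eta^{s}$ is controlled by the standard approximation property of $\mathcal{H}_{h}$ (piecewise polynomials of degree $q+1$), namely $\|\eta^{s}\|+h\|\eta^{s}\|_{\nabla}\leq C_{1}h^{q+1}\|w^{s}\|_{q+1}$. The regularity $w\in H^{3}(0,T;[H^{q+1}(\Omega)]^{2})$ lets me bound $\|w^{s}\|_{q+1}$ in terms of the data, which produces the $(\|u_{0}\|_{q+1}+\|v_{0}\|_{q+1})$ terms.

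Next I subtract the discrete stage equations from the exact weak identities. For the first stage I use (\ref{20a}) minus the scheme before (\ref{21}); for the second stage I use (\ref{27a}) minus the scheme before (\ref{28}). In both I test with $V\in\mathcal{W}_{h}$. By (\ref{16}), $(\eta^{s},V)_{.}=0$, so only the gradient part $(\alpha.*\Theta\eta,\Theta V)_{\#,.}$ of the projection error survives, together with the nonlinear differences and the truncation terms of size $\frac{5\sigma^{3}}{96}$ and $\frac{\sigma^{3}}{96}$.

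The main energy step takes $V=\xi_{h}^{n+\frac{1}{2}}+\xi_{h}^{n}$ in the first-stage error equation and $V=\xi_{h}^{n+1}+\xi_{h}^{n+\frac{1}{2}}$ in the second, then adds the two. The identity $(a-b,a+b)=\|a\|^{2}-\|b\|^{2}$ telescopes the $L^{2}$ terms into $\|\xi^{n+1}\|^{2}-\|\xi^{n}\|^{2}$ plus half-step differences. The implicit stage gives the dissipation $\frac{\sigma}{4}\alpha_{l}\|\xi^{n+1}+\xi^{n+\frac{1}{2}}\|_{\nabla}^{2}$. The explicit diffusion term $\frac{\sigma}{4}(\alpha.*\Theta(3\xi^{n}-\xi^{n-\frac{1}{2}}),\Theta V)$ has to be rewritten with the polarization identity
\begin{equation*}
2(3a-b,a+c)=3\|a\|^{2}+\|a-b\|^{2}+\ldots,
\end{equation*}
so that it yields the positive sums $3\|e^{s}\|_{\nabla}^{2}+\|e^{s}-e^{s-\frac{1}{2}}\|_{\nabla}^{2}$ appearing in the statement, plus boundary terms $\|e^{n+1}\|_{\nabla}^{2}+\|e^{n}\|_{\nabla}^{2}$ after summation.

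The nonlinear terms are handled with a local Lipschitz bound $\|F(t,w^{s})-F(t,w_{h}^{s})\|\leq C_{F}\|e_{h}^{s}\|$. This bound is justified by $L^{\infty}$ boundedness of $w$ (from $H^{q+1}$, $q\geq2$, via Sobolev embedding) and of $w_{h}$ (via Theorem \ref{t1} and an inverse estimate). I then apply Cauchy--Schwarz and Young to absorb the gradient parts into the dissipation. This leaves $\sigma(1+20C_{F}^{2})\|\xi\|^{2}$-type terms, $\sigma h^{2q}$ terms from $\eta$, and $\sigma^{5}$-type consistency terms, which sum to $T\sigma^{4}$ and give the $\sqrt{39T}$ factor.

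Finally I sum over $n$ and estimate the starting errors $\xi^{0}$ and $\xi^{\frac{1}{2}}$ from (\ref{s5}) and (\ref{33a}). Here the $\frac{\sigma^{2}}{8}\|w_{2t}\|$ Taylor remainder and the $L^{2}$ stability of $p_{h}$ give the first bracketed terms. A discrete Gronwall inequality then yields the factor $\exp(3T(1+20C_{F}^{2}))$, and the triangle inequality with the $\eta$ bound gives the $(\sigma^{2}+h^{q})^{2}$ rate for $u$ and $v$ separately.

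I expect the main obstacle to be the explicit-stage diffusion term. The term $-\frac{\sigma}{4}\alpha(\Theta(3\xi^{n}-\xi^{n-\frac{1}{2}}),\Theta V)$ is not sign-definite, so it must be paired with the implicit-stage dissipation so that the cross terms cancel or are dominated. If a direct polarization does not close, I would fall back on working modewise in the eigenbasis $\{\phi_{j}\}$, where each stage is a scalar recursion in $\lambda_{j}$, and bound the amplification. A secondary difficulty is the uniform $L^{\infty}$ control of $w_{h}$ needed for the Lipschitz constant $C_{F}$.
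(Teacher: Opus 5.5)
The step you flag as the main obstacle is where the argument fails. Neither your polarization identity nor your modewise fallback can close it without a time-step restriction tied to $h$.

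Drop $F$ and set $z=\alpha_{l}\lambda_{j}\sigma/4$. Then (\ref{s1})--(\ref{s4}) read $u_{h,j}^{n+\frac12}=(1-3z)u_{h,j}^{n}+zu_{h,j}^{n-\frac12}$ and $u_{h,j}^{n+1}=\frac{1-z}{1+z}u_{h,j}^{n+\frac12}$. Hence $u_{h,j}^{n+\frac32}=g(z)\,u_{h,j}^{n+\frac12}$ with
\[
g(z)=\frac{1-z}{1+z}(1-3z)+z=\frac{1-3z+4z^{2}}{1+z},\qquad g(z)-1=\frac{4z(z-1)}{1+z}.
\]
So $g(z)>1$ exactly when $z>1$, and $g(z)\sim4z$ as $z\to\infty$. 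Since $\lambda_{\max}$ grows like $h^{-2}$, the high modes of $\xi_{h}$ are amplified geometrically unless $\sigma\max_{l}\alpha_{l}\lambda_{\max}\leq4$, i.e.\ $\sigma\lesssim h^{2}$. These modes are fed by $\xi_{h}^{\frac12}$ and by the truncation terms at every step.

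Correspondingly, the leftover cross term $2(\nabla(3\xi_{h}^{n}-\xi_{h}^{n-\frac12}),\nabla\xi_{h}^{n+\frac12})$ hidden in your ``$\ldots$'' can only be absorbed, via an inverse inequality, under such a CFL condition. An estimate with constants independent of $h$ and $\sigma$, and with no coupling between them, is therefore out of reach.

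The paper does not overcome this either. Testing the explicit stage with $e_{h}^{n}$ leaves the wrong-signed term $-\frac12\|e_{h}^{n+\frac12}-e_{h}^{n}\|^{2}$ in (\ref{50}). In the $j$-th mode $e_{h}^{n+\frac12}-e_{h}^{n}\approx-z(3e_{h}^{n}-e_{h}^{n-\frac12})$, so this term is of order $z^{2}$ while the dissipation gained is of order $z$. The paper removes it through (\ref{52}), obtained by summing (\ref{50}) and (\ref{51}) over all half-integer levels as if each stage equation held at every half step. In fact the explicit-stage differences $e_{h}^{n+\frac12}-e_{h}^{n}$ and the implicit-stage differences $e_{h}^{n+1}-e_{h}^{n+\frac12}$ live on different half steps and do not cancel.

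\textbf{Lipschitz constant.} Your $C_{F}$ cannot come from Theorem~\ref{t1}. That result only gives $\|u_{h}^{s}\|,\|v_{h}^{s}\|\leq C_{0}\sqrt{\bar{M}_{q}}\sim h^{-1}$, and its root-condition argument ignores the stiffness that $g(z)$ exposes. Combined with an inverse estimate this yields $\|w_{h}^{s}\|_{L^{\infty}}\lesssim h^{-2}$, hence, $F$ being cubic, only $C_{F}\lesssim h^{-4}$, which ruins the factor $\exp(3T(1+20C_{F}^{2}))$. The paper's own $C_{F}$ in (\ref{41b}) also grows with $\bar{M}_{q}$, contrary to the statement. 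The standard repair is an induction hypothesis $\|\xi_{h}^{s}\|_{L^{\infty}}\leq1$ at earlier levels, recovered at level $n+1$ from $\|\xi_{h}^{n+1}\|_{L^{\infty}}\leq Ch^{-1}\|\xi_{h}^{n+1}\|$. This requires $h^{-1}(\sigma^{2}+h^{q})$ to be small.

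\textbf{Right-hand side.} With your (correct) splitting, the projection error enters at every time level and produces $\sup_{t}\|w(t)\|_{q+1}$, not just $\|u_{0}\|_{q+1}+\|v_{0}\|_{q+1}$. The stated right-hand side has its form only because the paper treats (\ref{48})--(\ref{49}) as valid for all $V\in[H^{1}(\Omega)]^{2}$ and tests with $e_{h}^{n}\notin\mathcal{W}_{h}$.

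What your plan can actually deliver is a conditional estimate under $\sigma\leq ch^{2}$, with different constants, not the theorem as stated.
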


       The proof of Theorem $\ref{t2}$ uses the following Lemmas

     \begin{lemma}\label{l1}\cite{9FHN}
     For every $z\in H^{q+1}(\Omega)$ (where $q\geq2$ is an integer), the following inequality is satisfied
    \begin{equation}\label{38}
     \underset{i=1}{\overset{M}\sum}\left[\|z-p_{h}z\|_{K_{i}}^{2}+h^{2}_{K_{i}}\|\nabla(z-p_{h}z)\|_{K_{i}}^{2}\right]\leq C_{1}h^{2q+2}\|z\|_{q+1}^{2},\text{\,\,and\,\,\,\,}
     \underset{i=1}{\overset{M}\sum}\|\nabla(z-p_{h}z)\|_{K_{i}}^{2}\leq C_{1}h^{2q}\|z\|_{q+1}^{2},
    \end{equation}
     where $h_{K_{i}}$ denotes the diameter of $K_{i}\in\Pi_{h}$, and $C_{1}$ is a positive constant independent of the mesh grid $h$ and time step $\sigma$.
     \end{lemma}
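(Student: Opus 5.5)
This lemma is the standard local approximation estimate for the $L^{2}$-projection onto piecewise polynomial spaces on a shape-regular triangulation. I would prove it through the quasi-interpolation (Bramble--Hilbert) route.

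\emph{Step 1: best approximation.} Since $p_{h}$ is the $L^{2}$-orthogonal projection onto $\mathcal{H}_{h}$, for every $\chi\in\mathcal{H}_{h}$ we have $\|z-p_{h}z\|\leq\|z-\chi\|$. The natural choice is $\chi=I_{h}z$, a Scott--Zhang or Cl\'ement type quasi-interpolant of degree $q+1$. On each $K_{i}$, the Bramble--Hilbert lemma combined with a scaling argument on the reference triangle gives
\begin{equation*}
\|z-I_{h}z\|_{K_{i}}+h_{K_{i}}\|\nabla(z-I_{h}z)\|_{K_{i}}\leq C h_{K_{i}}^{q+1}|z|_{q+1,\tilde K_{i}},
\end{equation*}
where $\tilde K_{i}$ is the patch of neighbouring elements. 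Finite overlap of the patches, which follows from conformity and shape regularity, lets us sum over $i$ and obtain the $L^{2}$ part of the first bound with $h_{K_{i}}\leq h$. One point needs care: the space $\mathcal{H}_{h}$ in $(\ref{9})$ also imposes $[[\nabla z_{h}]]=0$. I would either take $I_{h}$ to be a $C^{1}$ quasi-interpolant of the appropriate degree (Argyris-type, which is available since $q+1\geq3$), or simply use that the cited result \cite{9FHN} is stated for this space. Either way the same local estimate holds.

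\emph{Step 2: gradient terms.} $p_{h}$ is not gradient-optimal, so I would split
\begin{equation*}
\nabla(z-p_{h}z)=\nabla(z-I_{h}z)+\nabla(I_{h}z-p_{h}z).
\end{equation*}
The first piece is already bounded by $Ch_{K_{i}}^{q}|z|_{q+1,\tilde K_{i}}$. For the second piece, $I_{h}z-p_{h}z$ lies in the discrete space, so the local inverse inequality gives $\|\nabla\chi\|_{K_{i}}\leq Ch_{K_{i}}^{-1}\|\chi\|_{K_{i}}$. We then bound $\|I_{h}z-p_{h}z\|\leq\|I_{h}z-z\|+\|z-p_{h}z\|\leq 2\|z-I_{h}z\|$.

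\emph{Main obstacle.} The inverse estimate is local, but $p_{h}$ is a global projection. The weighted sum $\sum_{i}h_{K_{i}}^{2}\|\nabla(I_{h}z-p_{h}z)\|_{K_{i}}^{2}$ therefore becomes $\sum_{i}\|I_{h}z-p_{h}z\|_{K_{i}}^{2}$, which is controlled by the global $L^{2}$ bound; this gives the weighted first inequality directly. The unweighted second inequality needs $h_{K_{i}}^{-1}\leq Ch^{-1}$, i.e. quasi-uniformity of $\Pi_{h}$. This is the delicate point, since the paper assumes quasi-uniformity only on $\Gamma$. I would adopt global quasi-uniformity, which is implicit in using a single $h$, as the working hypothesis. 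Under that assumption the second inequality follows with $C_{1}$ enlarged, and $C_{1}$ depends only on $q$ and the shape constants, not on $h$ or $\sigma$.
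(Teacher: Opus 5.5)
The paper gives no proof of this lemma; it is simply quoted from the reference cited in the statement. Your route (best approximation, a quasi-interpolant, and a local inverse inequality to handle the fact that $p_h$ is a global projection) is the standard way to actually prove such an estimate. Granted Step 1, your Step 2 is sound. The weighted gradient term needs only shape regularity. You are also right that the unweighted bound needs an extra mesh hypothesis such as global quasi-uniformity, since $H^1$-stability of the $L^2$-projection is only known under quasi-uniformity or controlled grading. Shape regularity is not among (i)--(iii) either, so that is a second hypothesis you are adding.

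The genuine gap is Step 1 for the space $(\ref{9})$. Your claim that an Argyris-type $C^1$ interpolant is available because $q+1\geq 3$ is false. The Argyris element is quintic, and by \v{Z}en\'{\i}\v{s}ek's theorem a $C^1$-conforming polynomial finite element on an unsplit triangle must have degree at least $5$. For $q=2,3$ (and $q=3$ is the case run in the experiments) there is therefore no local $C^1$ interpolant of degree $q+1$ on $\Pi_h$ to which Bramble--Hilbert and scaling apply. Clough--Tocher and Powell--Sabin interpolants are piecewise polynomial on subdivided triangles, so they are not in $\mathcal{H}_h$. The approximation power of $C^1$ cubic or quartic splines on a given triangulation is a delicate, geometry-dependent question (compare the de Boor--H\"ollig counterexample for $C^1$ cubics), and a Bramble--Hilbert argument does not settle it.

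Consequently your argument proves the lemma only for $q\geq 4$, or for $\mathcal{H}_h$ read without the gradient-jump constraint. That means either a $C^0$ space, via Scott--Zhang, or a broken space, where $p_h$ acts element by element, everything is local, and not even quasi-uniformity is needed. Falling back on ``the cited result is stated for this space'' is not a proof of precisely the step that needs one.

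There is also a point you must settle before any interpolant can work. By $(\ref{8})$, imposing $[[\nabla z_h]]=0$ on boundary edges forces (at least) $\partial_n z_h=0$ on $\Gamma$. If $(\ref{9})$ is meant that way, the statement is false for general $z\in H^{q+1}(\Omega)$. On a boundary element where $|\partial_n z|\geq\delta>0$, scaling gives $\|z-p_hz\|_{K_i}\geq c\,\delta\,h_{K_i}^{2}$ for small $h$. Summing over the order $h^{-1}$ such elements of a quasi-uniform mesh yields $\|z-p_hz\|\geq c\,\delta\,h^{3/2}$. The lemma would then have to be restricted to $z$ satisfying the homogeneous Neumann condition, and your $I_h$ would have to preserve that condition.
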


     \begin{lemma}\label{l2}
      Consider that $w=(u,v)^{t}\in H^{3}(0,T;\text{\,}[H^{q+1}(\Omega)]^{2})\cap H^{3}(0,T;\text{\,}\mathcal{W})$ is the analytical solution of the initial-boundary value problem $(\ref{1})$-$(\ref{3})$, and suppose that $w_{h}(t)=(u_{h}(t),v_{h}(t))^{t}\in\mathcal{W}_{h}$, for $0\leq t\leq T$, is the approximate one provided by the numerical scheme $(\ref{s1})$-$(\ref{s6})$, thus it holds
      \begin{equation}\label{39}
      \|F_{l}(t_{s},w_{h}^{s})-F_{l}(t,w^{s})\|\leq C_{F}(\|u_{h}^{s}-u^{s}\|^{2}+\|v_{h}^{s}-v^{s}\|^{2})^{\frac{1}{2}},
       \end{equation}
      for $l=1,2$, $s\in\{n-\frac{1}{2},n,n+\frac{1}{2},n+1\}$, where the real-valued functions $F_{l}$ are defined by equation $(\ref{4a})$ and $C_{F}$, is a positive constant independent of the mesh grid $h$ and time step $\sigma$.
     \end{lemma}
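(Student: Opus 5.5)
The statement to prove is Lemma~\ref{l2}, a local Lipschitz estimate for the Gray--Scott nonlinearities $F_{1},F_{2}$ defined in $(\ref{4a})$. I will treat the difference algebraically, then bound the resulting products using $L^{\infty}$ bounds on both the exact and the discrete solutions. (I read the $t$ in $F_{l}(t,w^{s})$ as $t_{s}$; $F$ has no explicit time dependence, so this is immaterial.)

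\textbf{Step 1: algebraic decomposition.} Write $e_{u}=u_{h}^{s}-u^{s}$ and $e_{v}=v_{h}^{s}-v^{s}$. Then
\begin{equation*}
F_{1}(t_{s},w_{h}^{s})-F_{1}(t_{s},w^{s})=-\beta_{0}e_{u}-\left(u_{h}^{s}(v_{h}^{s})^{2}-u^{s}(v^{s})^{2}\right).
\end{equation*}
Expanding $u_{h}v_{h}^{2}-uv^{2}=e_{u}v_{h}^{2}+u(v_{h}+v)e_{v}$ gives
\begin{equation*}
F_{1}(t_{s},w_{h}^{s})-F_{1}(t_{s},w^{s})=-\beta_{0}e_{u}-(v_{h}^{s})^{2}e_{u}-u^{s}(v_{h}^{s}+v^{s})e_{v}.
\end{equation*}
Similarly,
\begin{equation*}
F_{2}(t_{s},w_{h}^{s})-F_{2}(t_{s},w^{s})=-(\beta_{0}+k_{0})e_{v}+(v_{h}^{s})^{2}e_{u}+u^{s}(v_{h}^{s}+v^{s})e_{v}.
\end{equation*}

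\textbf{Step 2: bound the $L^{2}$ norm.} By H\"older's inequality,
\begin{equation*}
\|F_{l}(t_{s},w_{h}^{s})-F_{l}(t_{s},w^{s})\|\leq\left(\beta_{0}+|k_{0}|+\|v_{h}^{s}\|_{L^{\infty}}^{2}+\|u^{s}\|_{L^{\infty}}\left(\|v_{h}^{s}\|_{L^{\infty}}+\|v^{s}\|_{L^{\infty}}\right)\right)\left(\|e_{u}\|+\|e_{v}\|\right).
\end{equation*}
The elementary inequality $\|e_{u}\|+\|e_{v}\|\leq\sqrt{2}(\|e_{u}\|^{2}+\|e_{v}\|^{2})^{1/2}$ then gives the form $(\ref{39})$, with $C_{F}$ equal to $\sqrt{2}$ times the bracket above.

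\textbf{Step 3: the exact solution is bounded.} Since $w\in H^{3}(0,T;[H^{q+1}(\Omega)]^{2})$ with $q\geq2$, the Sobolev embedding $H^{q+1}(\Omega)\hookrightarrow L^{\infty}(\Omega)$ in two dimensions, together with $H^{3}(0,T)\hookrightarrow C([0,T])$, bounds $\sup_{t}\|u(t)\|_{L^{\infty}}$ and $\sup_{t}\|v(t)\|_{L^{\infty}}$ by a constant depending only on $w$. This constant is independent of $h$ and $\sigma$.

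\textbf{Step 4 (main obstacle): a uniform bound on $\|v_{h}^{s}\|_{L^{\infty}}$.} Theorem~\ref{t1} only gives $\|v_{h}^{s}\|\leq C_{0}\sqrt{\bar{M}_{q}}$, which grows with refinement, so it cannot be used directly.

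The plan is to split $v_{h}^{s}=p_{h}v^{s}-(p_{h}v^{s}-v_{h}^{s})$.
\begin{itemize}
\item The first piece, $p_{h}v^{s}$, is bounded in $L^{\infty}$ uniformly in $h$, by $L^{\infty}$-stability of the $L^{2}$-projection on quasi-uniform meshes together with Step 3.
\item The second piece satisfies the inverse inequality $\|p_{h}v^{s}-v_{h}^{s}\|_{L^{\infty}}\leq Ch^{-1}\|p_{h}v^{s}-v_{h}^{s}\|$. This stays bounded provided the discrete error is $O(h)$ in $L^{2}$.
\end{itemize}
That proviso is circular with Theorem~\ref{t2}. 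The standard way out is an induction on time levels: assume the error bound of Theorem~\ref{t2} holds up to level $s$, which gives $\|e^{s}\|\leq C(\sigma^{2}+h^{q})$ and hence, since $q\geq2$, $h^{-1}(\sigma^{2}+h^{q})\leq C$ under a mild coupling such as $\sigma^{2}\leq h$. Then apply the Lipschitz bound at level $s$ and close the induction inside the proof of Theorem~\ref{t2}.

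Alternatively, and more simply, one may state as a standing assumption that the discrete solution remains in a fixed $L^{\infty}$ ball; this is consistent with the "small $\sigma$'' stability claim of Theorem~\ref{t1}. With that assumption, $C_{F}$ depends only on $\beta_{0}$, $k_{0}$ and the two $L^{\infty}$ bounds, and is independent of $h$ and $\sigma$.
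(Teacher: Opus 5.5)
\textbf{Verdict.} Your argument is correct as a conditional result, and it is sounder than the paper's own proof. The remaining gap is in how you propose to remove the condition.

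\textbf{Comparison with the paper.} Your Steps 1--3 are correct, and Step 4 identifies exactly where the paper's argument fails. The paper uses the same algebraic splitting for $F_{1}$. For $F_{2}$ it writes $u_{h}^{s}(v_{h}^{s})^{2}-u^{s}(v^{s})^{2}=(v^{s})^{2}e_{u}+u_{h}^{s}(v_{h}^{s}+v^{s})e_{v}$, so both $u_{h}^{s}$ and $v_{h}^{s}$ appear as coefficients; your splitting needs control of $v_{h}^{s}$ only. The paper then bounds the products purely in $L^{2}$, as in $\|(v_{h}^{s})^{2}e_{u}\|\leq\|v_{h}^{s}\|^{2}\|e_{u}\|$, takes $C_{2}$ to be an $L^{2}$ bound on $u^{s},v^{s}$, and inserts $(\ref{37})$. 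That product inequality is not valid in $L^{2}$: the right-hand side needs $\|v_{h}^{s}\|_{L^{\infty}}^{2}$. Your H\"older step, combined with the Sobolev embedding of Step 3, is the correct version. Moreover, the constant $(\ref{41b})$ obtained the paper's way contains $\bar{M}_{q}$, which grows like $h^{-2}$ under mesh refinement. So the paper's $C_{F}$ is not independent of $h$, which is precisely the objection you raised against using $(\ref{37})$.

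\textbf{The remaining gap.} What you establish is a conditional result, and that is the best one can expect: nothing in the lemma's hypotheses controls $\|v_{h}^{s}\|_{L^{\infty}}$ uniformly in $h$. The gap is that your induction does not close as sketched.
\begin{itemize}
\item \emph{Implicit stage.} Estimating $e_{h}^{n+1}$ from $(\ref{49})$ already requires the Lipschitz bound at level $n+1$, through the term $\frac{\sigma}{4}(F(t_{n+1},w_{h}^{n+1})-F(t_{n+1},w^{n+1}),e_{h}^{n+1})$. That is, you need an $L^{\infty}$ bound on $v_{h}^{n+1}$ before the error at that level is known. Only the explicit stage is fed by earlier levels.
\item \emph{Circular constants.} $C_{F}$ would be defined through a bound on $\|v_{h}^{s}\|_{L^{\infty}}$ that comes from an error estimate whose constant itself grows with $C_{F}$ (like $\exp(3T(1+20C_{F}^{2}))$). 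So mere boundedness of $h^{-1}(\sigma^{2}+h^{q})$ under $\sigma^{2}\leq h$ is not enough.
\end{itemize}

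\textbf{Standard repair.} Use a cut-off argument.
\begin{enumerate}
\item Fix $R$ strictly larger than $C_{\infty}\sup_{t}\|w(t)\|_{L^{\infty}}$, where $C_{\infty}$ is the $L^{\infty}$-stability constant of $p_{h}$.
\item Replace $F$ by a globally Lipschitz $F_{R}$ that coincides with $F$ on $\{|u|,|v|\leq R\}$.
\item Prove the energy estimate for the truncated scheme, with $C_{F}=C_{F}(R)$.
\item Use your inverse-inequality argument, under $\sigma^{2}\leq\varepsilon h$ with $\varepsilon$ and $h$ small depending on $R$, to show that the truncated discrete solution never leaves the ball. It therefore solves the original scheme, and this also singles out which solution of the nonlinear implicit stage the lemma refers to.
\end{enumerate}
This route needs hypotheses the paper does not make. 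It requires quasi-uniformity of the whole triangulation $\Pi_{h}$, both for the inverse inequality and for the $L^{\infty}$-stability of $p_{h}$; the paper assumes quasi-uniformity only for $\Pi_{\Gamma h}$. It also requires the time--space coupling above.
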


      \begin{proof}
       Utilizing equations $(\ref{4a})$, it holds
       \begin{equation*}
      F_{1}(t_{s},w_{h}^{s})-F_{1}(t,w^{s})=\beta_{0}(1-u_{h}^{s})-u_{h}^{s}(v_{h}^{s})^{2}-[\beta_{0}(1-u^{s})-u^{s}(v^{s})^{2}]=-\beta_{0}(u_{h}^{s}-u^{s})
      -[u_{h}^{s}(v_{h}^{s})^{2}-u^{s}(v^{s})^{2}]=
       \end{equation*}
      \begin{equation}\label{40}
      -(\beta_{0}+(v_{h}^{s})^{2})(u_{h}^{s}-u^{s})-u^{s}(v_{h}^{s}+v^{s})(v_{h}^{s}-v^{s}),
       \end{equation}
      \begin{equation*}
      F_{2}(t_{s},w_{h}^{s})-F_{2}(t,w^{s})=-(\beta_{0}+k_{0})v_{h}^{s}+u_{h}^{s}(v_{h}^{s})^{2}-[-(\beta_{0}+k_{0})v^{s}+u^{s}(v^{s})^{2}]=-(\beta_{0}+k_{0})(v_{h}^{s}-v^{s})+
       \end{equation*}
      \begin{equation}\label{41}
      u_{h}^{s}(v_{h}^{s}+v^{s})(v_{h}^{s}-v^{s})+(v^{s})^{2}(u_{h}^{s}-u^{s})=(v^{s})^{2}(u_{h}^{s}-u^{s})+[-(\beta_{0}+k_{0})+u_{h}^{s}(v_{h}^{s}+v^{s})](v_{h}^{s}-v^{s}).
       \end{equation}

      Because, $w=(u,v)^{t}\in H^{3}(0,T;\text{\,}\mathcal{W})$, there is a positive constant $C_{2}$ independent of the time step $\sigma$ and grid space $h$ so that
      \begin{equation*}
      \|u^{s}\|\leq C_{2},\text{\,\,\,\,}\|v^{s}\|\leq C_{2}.
       \end{equation*}

     Utilizing this fact, along with the Cauchy-Schwarz inequality, estimate $(\ref{37})$, and equations $(\ref{40})$-$(\ref{41})$, straightforward computations result in
      \begin{equation*}
      \|F_{1}(t_{s},w_{h}^{s})-F_{1}(t,w^{s})\|^{2}\leq 4[(\beta_{0}^{2}|\Omega|^{2}+\bar{M}_{q}^{2}C_{0}^{4})\|u_{h}^{s}-u^{s}\|^{2}
      +C_{2}^{2}(C_{2}^{2}+\bar{M}_{q}C_{0}^{2})\|v_{h}^{s}-v^{s}\|^{2}],
       \end{equation*}
      \begin{equation*}
      \|F_{2}(t_{s},w_{h}^{s})-F_{2}(t,w^{s})\|^{2}\leq 4[\frac{1}{2}C_{2}^{4}\|u_{h}^{s}-u^{s}\|^{2}+((\beta_{0}+k_{0})^{2}|\Omega|^{2}+
      2\bar{M}_{q}C_{0}^{2}(C_{2}^{2}+\bar{M}_{q}C_{0}^{2}))\|v_{h}^{s}-v^{s}\|^{2}].
       \end{equation*}

       Taking the square root, it is not hard to see that these estimates imply
        \begin{equation*}
      \|F_{1}(t_{s},w_{h}^{s})-F_{1}(t,w^{s})\|\leq 2\max\{\beta_{0}|\Omega|+\bar{M}_{q}C_{0}^{2},
      C_{2}(C_{2}+\sqrt{\bar{M}_{q}}C_{0})\}(\|u_{h}^{s}-u^{s}\|^{2}+\|v_{h}^{s}-v^{s}\|^{2})^{\frac{1}{2}},
       \end{equation*}
      \begin{equation*}
      \|F_{2}(t_{s},w_{h}^{s})-F_{2}(t,w^{s})\|\leq 2\max\{\frac{C_{2}^{2}}{\sqrt{2}},(\beta_{0}+k_{0})|\Omega|+
      \sqrt{2\bar{M}_{q}}C_{0}(C_{2}+\sqrt{\bar{M}_{q}}C_{0})\}(\|u_{h}^{s}-u^{s}\|^{2}+\|v_{h}^{s}-v^{s}\|^{2})^{\frac{1}{2}},
       \end{equation*}
       where $|\Omega|$ designates the Lebesgue measure of the domain $\Omega$. Setting
       \begin{equation}\label{41b}
     C_{F}=2\max\left\{\max\{\beta_{0}|\Omega|+\bar{M}_{q}C_{0}^{2},C_{2}(C_{2}+\sqrt{\bar{M}_{q}}C_{0})\},\text{\,}\max\{\frac{C_{2}^{2}}{\sqrt{2}},(\beta_{0}+k_{0})|\Omega|+
      \sqrt{2\bar{M}_{q}}C_{0}(C_{2}+\sqrt{\bar{M}_{q}}C_{0})\}\right\},
    \end{equation}
    this completes the proof of Lemma $\ref{l2}$.
      \end{proof}

    \begin{proof} (of Theorem $\ref{t2}$).
    Setting: $w_{h}^{s}=(u_{h}^{s},v_{h}^{s})^{t}=\left(\underset{l=1}{\overset{\bar{M}_{q}}\sum}u_{h,l}^{s}\phi_{l},\underset{l=1}{\overset{\bar{M}_{q}}\sum}
    v_{h,l}^{s}\phi_{l}\right)^{t}$, $F(t_{s},w_{h}^{s})=(F_{1}(t_{s},w_{h}^{s}),F_{2}(t_{s},w_{h}^{s}))^{t}$ for $s\in\{n-\frac{1}{2},n,n+\frac{1}{2},n+1\}$, $\psi_{j}=(\phi_{j},\phi_{j})^{t}$, using equations $(\ref{s1})$-$(\ref{s4})$, and rearranging terms, we obtain
    \begin{equation}\label{42}
     w_{h,j}^{n+\frac{1}{2}}-w_{h,j}^{n}+\frac{\sigma\lambda_{j}}{4}\alpha.*(3w_{h,j}^{n}-w_{h,j}^{n-\frac{1}{2}})=\frac{\sigma}{4} \left(3F(t_{n},w_{h}^{n})-
     F(t_{n-\frac{1}{2}},w_{h}^{n-\frac{1}{2}}),\psi_{j}\right)_{.},
     \end{equation}
    \begin{equation}\label{43}
     w_{h,j}^{n+1}-w_{h,j}^{n+\frac{1}{2}}+\frac{\sigma\lambda_{j}}{4}\alpha.*(w_{h,j}^{n+1}+w_{h,j}^{n+\frac{1}{2}})=\frac{\sigma}{4}
     \left(F(t_{n+1},w_{h}^{n+1})+F(t_{n+\frac{1}{2}},w_{h}^{n+\frac{1}{2}}),\psi_{j}\right)_{.},
     \end{equation}
     for $j=1,2,...,\bar{M}_{q}$. Because $\mathcal{B}=\{\phi_{l}:\text{\,}l=1,2,...,\bar{M}_{q}\}$, is an orthonormal basis for the scalar product $\left(.,.\right)$, and it is an orthogonal basis for the inner product $\left(.,.\right)_{\nabla}$, replacing into equations $(\ref{42})$-$(\ref{43})$, $\psi_{j}$ with $V\in [H^{2}(\Omega)]$, using the inner products $\left(.,.\right)$ and $\left(.,.\right)_{\#}$ defined by equation $(\ref{11})$, a simple manipulation gives
     \begin{equation}\label{44}
     \left(w_{h}^{n+\frac{1}{2}}-w_{h}^{n},V\right)+\frac{\sigma}{4}\left(\alpha.*\Theta(3w_{h}^{n}-w_{h}^{n-\frac{1}{2}}),\Theta V\right)_{\#}=\frac{\sigma}{4}
     \left(3F(t_{n},w_{h}^{n})-F(t_{n-\frac{1}{2}},w_{h}^{n-\frac{1}{2}}),V\right),
     \end{equation}
      \begin{equation}\label{45}
     \left(w_{h}^{n+1}-w_{h}^{n+\frac{1}{2}},V\right)+\frac{\sigma}{4}\left(\alpha.*\Theta(w_{h}^{n+1}+w_{h}^{n+\frac{1}{2}}),\Theta V\right)_{\#}= \frac{\sigma}{4}\left(F(t_{n+1},w_{h}^{n+1})+F(t_{n+\frac{1}{2}},w_{h}^{n+\frac{1}{2}}),V\right).
     \end{equation}

     In a similar manner, performing direct calculations, equations $(\ref{20a})$ and $(\ref{27a})$ become
     \begin{equation*}
     \left(w^{n+\frac{1}{2}}-w^{n},V\right)+\frac{\sigma}{4}\left(\alpha.*\Theta(3w^{n}-w^{n-\frac{1}{2}}),\Theta V\right)_{\#}=\frac{\sigma}{4}
     \left(3F(t_{n},w^{n})-F(t_{n-\frac{1}{2}},w^{n-\frac{1}{2}}),V\right)+
     \end{equation*}
   \begin{equation}\label{46}
    \frac{5\sigma^{3}}{96}\left(\alpha.*\Lambda w_{2t}(\theta_{1}^{n})+\frac{d^{2}}{dt^{2}}[F(t,w)](\theta_{2}^{n}),V\right),
     \end{equation}
      \begin{equation*}
     \left(w^{n+1}-w^{n+\frac{1}{2}},V\right)+\frac{\sigma}{4}\left(\alpha.*\Theta(w^{n+1}+w^{n+\frac{1}{2}}),\Theta V\right)_{\#}= \frac{\sigma}{4}\left(F(t_{n+1},w^{n+1})+F(t_{n+\frac{1}{2}},w^{n+\frac{1}{2}}),V\right)-
     \end{equation*}
   \begin{equation}\label{47}
   \frac{\sigma^{3}}{96}\left(\alpha.*\Lambda w_{2t}(\theta_{3}^{n+1})+\frac{d^{2}}{dt^{2}}[F(t,w)](\theta_{4}^{n+1}),V\right).
     \end{equation}

     Subtracting equations $(\ref{46})$ and $(\ref{47})$ from approximations $(\ref{44})$ and $(\ref{45})$, respectively, and utilizing equality $e_{h}^{s}=w_{h}^{s}-w^{s}$, this yields
     \begin{equation*}
     \left(e_{h}^{n+\frac{1}{2}}-e_{h}^{n},V\right)+\frac{\sigma}{4}\left(\alpha.*\Theta(3e_{h}^{n}-e_{h}^{n-\frac{1}{2}}),\Theta V\right)_{\#}=\frac{\sigma}{4}
     \left(3(F(t_{n},w_{h}^{n})-F(t_{n},w^{n}))-\right.
     \end{equation*}
   \begin{equation}\label{48}
   \left.(F(t_{n-\frac{1}{2}},w_{h}^{n-\frac{1}{2}})-F(t_{n-\frac{1}{2}},w^{n-\frac{1}{2}})),V\right)-\frac{5\sigma^{3}}{96}\left(\alpha.*\Lambda w_{2t}(\theta_{1}^{n})+\frac{d^{2}}{dt^{2}}[F(t,w)](\theta_{2}^{n}),V\right),\text{\,\,}\forall V\in[H^{1}(\Omega)]^{2},
     \end{equation}
      \begin{equation*}
     \left(e_{h}^{n+1}-e_{h}^{n+\frac{1}{2}},V\right)+\frac{\sigma}{4}\left(\alpha.*\Theta(e_{h}^{n+1}+e_{h}^{n+\frac{1}{2}}),\Theta V\right)_{\#}= \frac{\sigma}{4}\left((F(t_{n+1},w_{h}^{n+1})-F(t_{n+1},w^{n+1}))+\right.
     \end{equation*}
   \begin{equation}\label{49}
   \left.(F(t_{n+\frac{1}{2}},w_{h}^{n+\frac{1}{2}})-F(t_{n+\frac{1}{2}},w^{n+\frac{1}{2}})),V\right)+\frac{\sigma^{3}}{96}\left(\alpha.*\Lambda w_{2t}(\theta_{3}^{n+1})+\frac{d^{2}}{dt^{2}}[F(t,w)](\theta_{4}^{n+1}),V\right),\text{\,\,}\forall V\in[H^{1}(\Omega)]^{2}.
     \end{equation}

   Taking $z=e_{h}^{n}$ in equation $(\ref{48})$ and $z=e_{h}^{n+1}$ in equation $(\ref{49})$, simple computations provide
     \begin{equation*}
     \frac{1}{2}\left(\|e_{h}^{n+\frac{1}{2}}\|^{2}-\|e_{h}^{n}\|^{2}-\|e_{h}^{n+\frac{1}{2}}-e_{h}^{n}\|^{2}\right)+\frac{\sigma}{4}
     \left[2\alpha_{1}\|e_{uh}^{n}\|_{\nabla}^{2}+\frac{\alpha_{1}}{2}(\|e_{uh}^{n}-e_{uh}^{n-\frac{1}{2}}\|_{\nabla}^{2}+
     \|e_{uh}^{n}\|_{\nabla}^{2}-\|e_{uh}^{n-\frac{1}{2}}\|_{\nabla}^{2})+\right.
    \end{equation*}
    \begin{equation*}
     \left.2\alpha_{2}\|e_{vh}^{n}\|_{\nabla}^{2}+\frac{\alpha_{2}}{2}(\|e_{vh}^{n}-e_{vh}^{n-\frac{1}{2}}\|_{\nabla}^{2}+
     \|e_{vh}^{n}\|_{\nabla}^{2}-\|e_{vh}^{n-\frac{1}{2}}\|_{\nabla}^{2})\right]=\frac{\sigma}{4}\left(3(F(t_{n},w_{h}^{n})-F(t_{n},w^{n}))-\right.
     \end{equation*}
   \begin{equation}\label{50}
   \left.(F(t_{n-\frac{1}{2}},w_{h}^{n-\frac{1}{2}})-F(t_{n-\frac{1}{2}},w^{n-\frac{1}{2}})),e_{h}^{n}\right)-\frac{5\sigma^{3}}{96}\left(\alpha.*\Lambda w_{2t}(\theta_{1}^{n})+\frac{d^{2}}{dt^{2}}[F(t,w)](\theta_{2}^{n}),e_{h}^{n}\right),
     \end{equation}
     \begin{equation*}
     \frac{1}{2}\left(\|e_{h}^{n+1}\|^{2}-\|e_{h}^{n+\frac{1}{2}}\|^{2}+\|e_{h}^{n+1}-e_{h}^{n+\frac{1}{2}}\|^{2}\right)+\frac{\sigma}{8}
     \left[\alpha_{1}(\|e_{uh}^{n+1}+e_{uh}^{n+\frac{1}{2}}\|_{\nabla}^{2}+\|e_{uh}^{n+1}\|_{\nabla}^{2}-\|e_{uh}^{n+\frac{1}{2}}\|_{\nabla}^{2})+\right.
    \end{equation*}
    \begin{equation*}
     \left.\alpha_{2}(\|e_{vh}^{n+1}+e_{vh}^{n+\frac{1}{2}}\|_{\nabla}^{2}+\|e_{vh}^{n+1}\|_{\nabla}^{2}-\|e_{vh}^{n+\frac{1}{2}}\|_{\nabla}^{2})\right]=
     \frac{\sigma}{4}\left((F(t_{n+1},w_{h}^{n+1})-F(t_{n+1},w^{n+1}))+\right.
     \end{equation*}
   \begin{equation}\label{51}
   \left.(F(t_{n+\frac{1}{2}},w_{h}^{n+\frac{1}{2}})-F(t_{n+\frac{1}{2}},w^{n+\frac{1}{2}})),V\right)+\frac{\sigma^{3}}{96}\left(\alpha.*\Lambda w_{2t}(\theta_{3}^{n+1})+\frac{d^{2}}{dt^{2}}[F(t,w)](\theta_{4}^{n+1}),e_{h}^{n+1}\right).
     \end{equation}

     Both equations hold for every nonnegative $s\leq n$. Since
     \begin{equation}\label{52}
    \underset{s=0}{\overset{n}\sum}\|e_{h}^{s+1}-e_{h}^{s+\frac{1}{2}}\|^{2}-\underset{s=\frac{1}{2}}{\overset{n}\sum}\|e_{h}^{s+\frac{1}{2}}-e_{h}^{s}\|^{2}=
    \underset{s=\frac{1}{2}}{\overset{n+\frac{1}{2}}\sum}\|e_{h}^{s+\frac{1}{2}}-e_{h}^{s}\|^{2}-\underset{s=\frac{1}{2}}{\overset{n}\sum}\|e_{h}^{s+\frac{1}{2}}-e_{h}^{s}\|^{2}=
    \|e_{h}^{n+1}-e_{h}^{n+\frac{1}{2}}\|^{2},
     \end{equation}
     where the summation index $s$ varies with a step size equals $\frac{1}{2}$, summing up equation $(\ref{50})$ from $s=\frac{1}{2},1,...,n$, equation $(\ref{51})$ from $s=0,\frac{1}{2},1,\frac{3}{2},...,n$, adding the resulting equations, utilizing $(\ref{52})$, and rearranging terms, to get
     \begin{equation*}
     \|e_{h}^{n+1}\|^{2}+\|e_{h}^{n+\frac{1}{2}}\|^{2}+\|e_{h}^{n+1}-e_{h}^{n+\frac{1}{2}}\|^{2}+\frac{\sigma}{4}
     \left\{\alpha_{1}\left[\underset{s=\frac{1}{2}}{\overset{n}\sum}(\|e_{uh}^{s+1}+e_{uh}^{s+\frac{1}{2}}\|_{\nabla}^{2}+
     \|e_{uh}^{s}-e_{uh}^{s-\frac{1}{2}}\|_{\nabla}^{2}+4\|e_{uh}^{s}\|_{\nabla}^{2})+\right.\right.
    \end{equation*}
    \begin{equation*}
     \left.\|e_{uh}^{n+1}\|_{\nabla}^{2}+\|e_{uh}^{n}\|_{\nabla}^{2}-\|e_{uh}^{\frac{1}{2}}\|_{\nabla}^{2}-\|e_{uh}^{0}\|_{\nabla}^{2}+
     \|e_{uh}^{1}+e_{uh}^{\frac{1}{2}}\|_{\nabla}^{2}\right]+\alpha_{2}\left[\underset{s=\frac{1}{2}}{\overset{n}\sum}(\|e_{vh}^{s+1}+e_{vh}^{s+\frac{1}{2}}\|_{\nabla}^{2}+
     \right.
     \end{equation*}
    \begin{equation*}
    \left.\left.\|e_{vh}^{s}-e_{vh}^{s-\frac{1}{2}}\|_{\nabla}^{2}+4\|e_{vh}^{s}\|_{\nabla}^{2})+\|e_{vh}^{n+1}\|_{\nabla}^{2}+\|e_{vh}^{n}\|_{\nabla}^{2}-\|e_{vh}^{\frac{1}{2}}\|_{\nabla}^{2}-\|e_{vh}^{0}\|_{\nabla}^{2}+
     \|e_{vh}^{1}+e_{vh}^{\frac{1}{2}}\|_{\nabla}^{2}\right]\right\}=
     \end{equation*}
      \begin{equation*}
     2\|e_{h}^{\frac{1}{2}}\|^{2}+\frac{\sigma}{4}\underset{s=\frac{1}{2}}{\overset{n}\sum}\left(3(F(t_{s},w_{h}^{s})-F(t_{s},w^{s}))-
   (F(t_{s-\frac{1}{2}},w_{h}^{s-\frac{1}{2}})-F(t_{s-\frac{1}{2}},w^{s-\frac{1}{2}})),e_{h}^{s}\right)+
    \end{equation*}
    \begin{equation*}
    \frac{\sigma}{4}\underset{s=0}{\overset{n}\sum}\left((F(t_{s+1},w_{h}^{s+1})-F(t_{s+1},w^{s+1}))+
   (F(t_{s+\frac{1}{2}},w_{h}^{s+\frac{1}{2}})-F(t_{s+\frac{1}{2}},w^{s+\frac{1}{2}})),e_{h}^{s+1}\right)-
    \end{equation*}
   \begin{equation*}
   \frac{\sigma^{3}}{96}\left[5\underset{s=\frac{1}{2}}{\overset{n}\sum}\left(\alpha.*\Lambda w_{2t}(\theta_{1}^{s})+\frac{d^{2}}{dt^{2}}[F(t,w)](\theta_{2}^{s}), e_{h}^{s}\right)-\underset{s=0}{\overset{n}\sum}\left(\alpha.*\Lambda w_{2t}(\theta_{3}^{s+1})+\frac{d^{2}}{dt^{2}}[F(t,w)](\theta_{4}^{s+1}), e_{h}^{s+1}\right)\right].
     \end{equation*}

     This equation implies
     \begin{equation*}
     \|e_{h}^{n+1}\|^{2}+\|e_{h}^{n+\frac{1}{2}}\|^{2}+\frac{\sigma}{4}\underset{1\leq l\leq2}{\min}\{\alpha_{l}\}
     \left[\underset{s=\frac{1}{2}}{\overset{n}\sum}(\|\Theta(e_{h}^{s+1}+e_{h}^{s+\frac{1}{2}})\|_{\#}^{2}+
     \|\Theta(e_{h}^{s}-e_{h}^{s-\frac{1}{2}})\|_{\#}^{2}+3\|\Theta e_{h}^{s}\|_{\#}^{2})+\right.
    \end{equation*}
    \begin{equation*}
     \left.\|\Theta e_{h}^{n+1}\|_{\#}^{2}+\|\Theta e_{h}^{n}\|_{\#}^{2}\right] \leq 2\|e_{h}^{\frac{1}{2}}\|^{2}+\frac{\sigma}{4}\underset{1\leq l\leq2}{\max}\{\alpha_{l}\}\|\Theta e_{h}^{0}\|_{\#}^{2}+\frac{\sigma}{4}\underset{s=\frac{1}{2}}{\overset{n}\sum}\left(3(F(t_{s},w_{h}^{s})-F(t_{s},w^{s}))-\right.
     \end{equation*}
      \begin{equation*}
      \left.(F(t_{s-\frac{1}{2}},w_{h}^{s-\frac{1}{2}})-F(t_{s-\frac{1}{2}},w^{s-\frac{1}{2}})),e_{h}^{s}\right)+
      \frac{\sigma}{4}\underset{s=0}{\overset{n}\sum}\left((F(t_{s+1},w_{h}^{s+1})-F(t_{s+1},w^{s+1}))+\right.
    \end{equation*}
    \begin{equation*}
    \left.(F(t_{s+\frac{1}{2}},w_{h}^{s+\frac{1}{2}})-F(t_{s+\frac{1}{2}},w^{s+\frac{1}{2}})),e_{h}^{s+1}\right)-
    \frac{\sigma^{3}}{96}\left[5\underset{s=\frac{1}{2}}{\overset{n}\sum}\left(\alpha.*\Lambda w_{2t}(\theta_{1}^{s})+\frac{d^{2}}{dt^{2}}[F(t,w)](\theta_{2}^{s}), e_{h}^{s}\right)\right.
    \end{equation*}
   \begin{equation}\label{53}
   \left.-\underset{s=0}{\overset{n}\sum}\left(\alpha.*\Lambda w_{2t}(\theta_{3}^{s+1})+\frac{d^{2}}{dt^{2}}[F(t,w)](\theta_{4}^{s+1}), e_{h}^{s+1}\right)\right].
     \end{equation}

   Applying the Cauchy-Schwarz inequality along with estimate $(\ref{39})$ in Lemma $\ref{l2}$ and using the inner product $\left(.,.\right)$, straightforward computations result in
  \begin{equation}\label{54}
   \left(3(F(t_{s},w_{h}^{s})-F(t_{s},w^{s}))-(F(t_{s-\frac{1}{2}},w_{h}^{s-\frac{1}{2}})-F(t_{s-\frac{1}{2}},w^{s-\frac{1}{2}})),e^{s}\right)\leq (\frac{1}{2}+18C_{F}^{2})(\|e_{h}^{s}\|^{2}+\|e_{h}^{s-\frac{1}{2}}\|^{2}),
     \end{equation}
    \begin{equation}\label{55}
   \left(F(t_{s+1},w_{h}^{s+1})-F(t_{s+1},w^{s+1})+F(t_{s+\frac{1}{2}},w_{h}^{s+\frac{1}{2}})-F(t_{s+\frac{1}{2}},w^{s+\frac{1}{2}}),e^{s+1}\right)\leq (\frac{1}{2}+2C_{F}^{2})(\|e_{h}^{s+1}\|^{2}+\|e_{h}^{s+\frac{1}{2}}\|^{2}),
     \end{equation}
    \begin{equation}\label{56}
   -5\sigma^{3}\left(\alpha.*\Lambda w_{2t}(\theta_{1}^{s})+\frac{d^{2}}{dt^{2}}[F(t,w)](\theta_{2}^{s}), e_{h}^{s}\right)\leq
   \frac{\sigma}{2}\|e_{h}^{s}\|^{2}+\frac{25\sigma^{5}}{2}\||\alpha.*\Lambda w_{2t}+\frac{d^{2}}{dt^{2}}[F(t,w)]|\|_{\infty}^{2},
     \end{equation}
   \begin{equation}\label{57}
   \sigma^{3}\left(\alpha.*\Lambda w_{2t}(\theta_{3}^{s+1})+\frac{d^{2}}{dt^{2}}[F(t,w)](\theta_{4}^{s+1}), e_{h}^{s+1}\right)\leq
   \frac{\sigma}{2}\|e_{h}^{s+1}\|^{2}+\frac{\sigma^{5}}{2}\||\alpha.*\Lambda w_{2t}+\frac{d^{2}}{dt^{2}}[F(t,w)]|\|_{\infty}^{2},
     \end{equation}
  where $C_{F}$ is the positive constant given by equation $(\ref{41b})$. Substituting estimates $(\ref{54})$-$(\ref{57})$, into inequality $(\ref{53})$ and rearranging terms, it is easy to see that
     \begin{equation*}
     \|e_{h}^{n+1}\|^{2}+\|e_{h}^{n+\frac{1}{2}}\|^{2}+\frac{\sigma}{4}\underset{1\leq l\leq2}{\min}\{\alpha_{l}\}
     \left[\underset{s=\frac{1}{2}}{\overset{n}\sum}(\|\Theta(e_{h}^{s+1}+e_{h}^{s+\frac{1}{2}})\|_{\#}^{2}+
     \|\Theta(e_{h}^{s}-e_{h}^{s-\frac{1}{2}})\|_{\#}^{2}+3\|\Theta e_{h}^{s}\|_{\#}^{2})+\right.
    \end{equation*}
    \begin{equation*}
     \left.\|\Theta e_{h}^{n+1}\|_{\#}^{2}+\|\Theta e_{h}^{n}\|_{\#}^{2}\right] \leq 2\|e_{h}^{\frac{1}{2}}\|^{2}+\frac{\sigma}{4}\left(\underset{1\leq l\leq2}{\max}\{\alpha_{l}\}\|\Theta e_{h}^{0}\|_{\#}^{2}+(\frac{1}{2}+18C_{F}^{2})\|e_{h}^{0}\|^{2}\right)+
    \end{equation*}
    \begin{equation*}
    \sigma(1+20C_{F}^{2})\underset{s=0}{\overset{n}\sum}(\|e_{h}^{s+1}\|^{2}+\|e_{h}^{s+\frac{1}{2}}\|^{2})+
    13\sigma^{5}(2n+1)\||\alpha.*\Lambda w_{2t}+\frac{d^{2}}{dt^{2}}[F(t,w)]|\|_{\infty}^{2}.
     \end{equation*}

   For small values of the time step $\sigma$, the application of the Gronwall inequality yields
    \begin{equation*}
     \|e_{h}^{n+1}\|^{2}+\|e_{h}^{n+\frac{1}{2}}\|^{2}+\frac{\sigma}{4}\underset{1\leq l\leq2}{\min}\{\alpha_{l}\}
     \left[\underset{s=\frac{1}{2}}{\overset{n}\sum}(\|\Theta(e_{h}^{s+1}+e_{h}^{s+\frac{1}{2}})\|_{\#}^{2}+
     \|\Theta(e_{h}^{s}-e_{h}^{s-\frac{1}{2}})\|_{\#}^{2}+3\|\Theta e_{h}^{s}\|_{\#}^{2})+\right.
    \end{equation*}
    \begin{equation*}
     \left.\|\Theta e_{h}^{n+1}\|_{\#}^{2}+\|\Theta e_{h}^{n}\|_{\#}^{2}\right] \leq \left[2\|e_{h}^{\frac{1}{2}}\|^{2}+\frac{\sigma}{4}\left(\underset{1\leq l\leq2}{\max}\{\alpha_{l}\}\|\Theta e_{h}^{0}\|_{\#}^{2}+(\frac{1}{2}+18C_{F}^{2})\|e_{h}^{0}\|^{2}\right)+\right.
    \end{equation*}
     \begin{equation}\label{58}
     \left.13\sigma^{5}(2n+1)\||\alpha.*\Lambda w_{2t}+\frac{d^{2}}{dt^{2}}[F(t,w)]|\|_{\infty}^{2}\right]\exp\left((2n+1)\sigma(1+20C_{F}^{2})\right).
     \end{equation}

    Utilizing the initial conditions $(\ref{s5})$, Lemma $\ref{l1}$, equations $(\ref{33a})$-$(\ref{33b})$, and the definition of the norms $\|.\|$ and $\|.\|_{\nabla}$, given by equation $(\ref{12})$, it is not hard to show that
        \begin{equation*}
   \|e_{h}^{\frac{1}{2}}\|^{2}=\|w_{h}^{\frac{1}{2}}-w^{\frac{1}{2}}\|^{2}\leq 2(\|w_{h}^{\frac{1}{2}}-\widetilde{w}^{\frac{1}{2}}\|^{2}+\|\widetilde{w}^{\frac{1}{2}}-w^{\frac{1}{2}}\|^{2})\leq 2(\|u_{h}^{\frac{1}{2}}-\widetilde{u}^{\frac{1}{2}}\|^{2}+\|v_{h}^{\frac{1}{2}}-\widetilde{v}^{\frac{1}{2}}\|^{2}+\frac{\sigma^{4}}{64}\|w_{2t}(\theta_{0})\|^{2})\leq
    \end{equation*}
     \begin{equation*}
  2[\|p_{h}(u_{0}+\frac{\sigma}{2}u_{1})-(u_{0}+\frac{\sigma}{2}u_{1})\|^{2}+\|p_{h}(v_{0}+\frac{\sigma}{2}v_{1})-(v_{0}+\frac{\sigma}{2}v_{1})\|^{2}+
  \frac{\sigma^{4}}{64}\|w_{2t}(\theta_{0})\|^{2}]\leq 2[\frac{\sigma^{4}}{64}\|w_{2t}(\theta_{0})\|^{2}+
     \end{equation*}
        \begin{equation}\label{59}
    C_{1}h^{2q+2}(\|u_{0}+\frac{\sigma}{2}u_{1}\|^{2}_{q+1}+\|v_{0}+\frac{\sigma}{2}v_{1}\|^{2}_{q+1})]\leq
  2(\sqrt{C_{1}}(\|u_{0}+\frac{\sigma}{2}u_{1}\|_{q+1}+\|v_{0}+\frac{\sigma}{2}v_{1}\|_{q+1})+\frac{1}{8}\|w_{2t}(\theta_{0})\|)^{2}(\sigma^{2}+h^{q+1})^{2},
     \end{equation}
     \begin{equation}\label{60}
   \|e_{h}^{0}\|^{2}=\|u_{h}^{0}-u_{0}\|^{2}+\|v_{h}^{0}-v_{0}\|^{2}=\|p_{h}u_{0}-u_{0}\|^{2}+\|p_{h}v_{0}-v_{0}\|^{2}\leq C_{1}h^{2q+2}(\|u_{0}\|_{q+1}+\|v_{0}\|_{q+1})^{2},
    \end{equation}
    \begin{equation}\label{61}
   \|\Theta e_{h}^{0}\|_{\#}^{2}=\|u_{h}^{0}-u_{0}\|_{\nabla}^{2}+\|v_{h}^{0}-v_{0}\|_{\nabla}^{2}=\|p_{h}u_{0}-u_{0}\|_{\nabla}^{2}+\|p_{h}v_{0}-v_{0}\|_{\nabla}^{2}\leq C_{1}h^{2q}(\|u_{0}\|_{q+1}+\|v_{0}\|_{q+1})^{2},
   \end{equation}

   Because $n\leq N=\frac{T}{\sigma}$, then $(2n+1)\sigma\leq 2T+\sigma\leq 3T$. Substituting estimates $(\ref{59})$-$(\ref{61})$, into estimate $(\ref{58})$ and rearranging terms, to obtain
    \begin{equation*}
     \|e_{h}^{n+1}\|^{2}+\|e_{h}^{n+\frac{1}{2}}\|^{2}+\frac{\sigma}{4}\underset{1\leq l\leq2}{\min}\{\alpha_{l}\}
     \left[\underset{s=\frac{1}{2}}{\overset{n}\sum}(\|\Theta(e_{h}^{s+1}+e_{h}^{s+\frac{1}{2}})\|_{\#}^{2}+
     \|\Theta(e_{h}^{s}-e_{h}^{s-\frac{1}{2}})\|_{\#}^{2}+3\|\Theta e_{h}^{s}\|_{\#}^{2})+\|\Theta e_{h}^{n+1}\|_{\#}^{2}+\right.
    \end{equation*}
    \begin{equation*}
     \left.\|\Theta e_{h}^{n}\|_{\#}^{2}\right] \leq \exp\left(3T(1+20C_{F}^{2})\right)\left[\frac{\sqrt{C_{1}\sigma}}{2}\left(\underset{1\leq l\leq2}{\max}\{\alpha_{l}\}+
     (\frac{1}{2}+18C_{F}^{2})h^{2}\right)^{\frac{1}{2}}(\|u_{0}\|_{q+1}+\|v_{0}\|_{q+1})+\right.
    \end{equation*}
     \begin{equation*}
     \left.2\sqrt{C_{1}}(\|u_{0}+\frac{\sigma}{2}u_{1}\|_{q+1}+\|v_{0}+\frac{\sigma}{2}v_{1}\|_{q+1}+\frac{1}{8}\||w_{2t}|\|_{\infty})+\sqrt{39T}\||\alpha.*\Lambda w_{2t}+\frac{d^{2}}{dt^{2}}[F(t,w)]|\|_{\infty}^{2}\right]^{2}(\sigma^{2}+h^{q})^{2}.
     \end{equation*}

   Since $\|e_{h}^{n+1}\|^{2}+\|e_{h}^{n+\frac{1}{2}}\|^{2}=\|u_{h}^{n+1}-u^{n+1}\|^{2}+\|u_{h}^{n+\frac{1}{2}}-u^{n+\frac{1}{2}}\|^{2}+\|v_{h}^{n+1}-v^{n+1}\|^{2}+
   \|v_{h}^{n+\frac{1}{2}}-v^{n+\frac{1}{2}}\|^{2}$, this completes the proof of Theorem $\ref{t2}$.
   \end{proof}

   \section{Numerical experiments}\label{sec4}
    In this section, we perform some numerical examples to verify the theoretical results provided in Theorems $\ref{t1}$-$\ref{t2}$, in an approximate solution of the two-dimensional Gray-Scott system $(\ref{1})$ with initial-boundary conditions $(\ref{2})$-$(\ref{3})$, and to demonstrate the efficiency of the developed computational approach $(\ref{s1})$-$(\ref{s6})$. We set $q=3$ and suppose that the time step $\sigma\in\{2^{-l},\text{\,\,}l=5,6,...,9\}$ while the grid mesh $h=2^{-l}$, for $l=3,4,...,7$. The  $L^{\infty}(0,T;[L^{2}(\Omega)]^{2})$-norm is used to calculate: exact solution $(u,v)$, computed solution $(u_{h},v_{h})$, and error $(e_{uh},e_{vh})$. That is,
     \begin{equation*}
       \||z|\|_{\infty}=\underset{0\leq n\leq N}{\max}\|z^{n}\|,
       \end{equation*}
       where $z\in \{u,u_{h},v,v_{h},e_{uh},e_{vh},\}$. The convergence order $CO(h,\sigma)$ of the constructed strategy is estimated using the formula
       \begin{equation*}
        CO(h,\sigma)=\log_{2}\left(\frac{\||e_{z2h,\sigma}|\|_{\infty}}{\||e_{zh,\sigma}|\|_{\infty}}\right),\text{\,\,\,\,and\,\,\,\,}CO(h,\sigma)=
        \log_{2}\left(\frac{\||e_{zh,2\sigma}|\|_{\infty}}{\||e_{zh,\sigma}|\|_{\infty}}\right).
       \end{equation*}
        Finally, the numerical calculations are carried out with the help of MATLAB R$2007b$.\\

      $\bullet$ \textbf{Example $1$}. Let $\Omega=[-1,1]\times[-1,1]$, be the fluid region and let $T=1$, be the final time. We consider the two-dimensional Gray-Scott system defined in \cite{zwz} as
     \begin{equation*}
      \left\{
        \begin{array}{ll}
          u_{t}-\alpha_{1}\Delta u=\beta_{0}(1-u)-uv^{2}, & \hbox{on $\Omega\times[0,\text{\,}1]$} \\
          \text{\,}\\
          v_{t}-\alpha_{2}\Delta v=-(\beta_{0}+k_{0})v+uv^{2}, & \hbox{on $\Omega\times[0,\text{\,}1]$}
        \end{array}
      \right.
     \end{equation*}
     where $\alpha_{1}=\alpha_{2}=\beta_{0}=1$, and $k_{0}=0$. The analytical solution $(u,v)$ is defined as
      \begin{equation*}
     u(x,y,t)=\cos(\pi x)\cos(\pi y)\sin(t),\text{\,\,\,\,\,}v(x,y,t)=2\cos(\pi x)\cos(\pi y)\sin(t).
     \end{equation*}
    Both initial and homogeneous Neumann boundary conditions are determined from the analytical solution.\\

       \textbf{Table 1} $\label{T1}$. Stability and convergence order $CO(h,\sigma)$ of the developed two-stage explicit/implicit approach with spectral orthogonal basis Galerkin finite element scheme with varying space step $h$ and time step $\sigma$.
          \begin{equation*}
          \begin{array}{c}
          \text{\,proposed computational approach:\,\,}\sigma=2^{-8}\\
           \begin{tabular}{cccccccccc}
            \hline
         $h$  &  $\||u|\|_{\infty}$&  $\||u_{h}|\|_{\infty}$&  $\||e_{u}(h,\sigma)|\|_{\infty}$ & $CO(h,\sigma)$ &  $\||v|\|_{\infty}$ &  $\||v_{h}|\|_{\infty}$ & $\||e_{v}(h,\sigma)|\|_{\infty}$ & $CO(h,\sigma)$ & CPU (s) \\
             \hline
            $2^{-3}$ & $0.5316$ & $0.5314$ & $6.1179\times10^{-4}$ & ---      & $1.0632$ & $1.0631$ & $3.2705\times10^{-4}$ &  ---    & $3.4681 $\\
            $2^{-4}$ & $0.5316$ & $0.5314$ & $8.1838\times10^{-5}$ & $2.9022$ & $1.0632$ & $1.0632$ & $4.0593\times10^{-5}$ &  3.0102 & $6.9317 $\\
            $2^{-5}$ & $0.5316$ & $0.5315$ & $1.0322\times10^{-5}$ & $2.9871$ & $1.0633$ & $1.0632$ & $5.3345\times10^{-6}$ &  2.9278 & $ 13.7698 $\\
            $2^{-6}$ & $0.5317$ & $0.5316$ & $1.3234\times10^{-6}$ & $2.9634$ & $1.0633$ & $1.0633$ & $6.6922\times10^{-7}$ &  2.9948 & $ 26.0938 $\\
            $2^{-7}$ & $0.5320$ & $0.5318$ & $1.6521\times10^{-7}$ & $3.0019$ & $1.0634$ & $1.0633$ & $8.3653\times10^{-8}$ &  3.0000 & $ 55.9712$\\
           \hline
          \end{tabular}
           \end{array}
            \end{equation*}
          \begin{equation*}
          \begin{array}{c}
          \text{\,proposed computational approach:\,\,}h=2^{-6}\\
         \begin{tabular}{cccccccccc}
            \hline
         $\sigma$  &  $\||u|\|_{\infty}$&  $\||u_{h}|\|_{\infty}$&  $\||e_{u}(h,\sigma)|\|_{\infty}$ & $CO(h,\sigma)$ &  $\||v|\|_{\infty}$ &  $\||v_{h}|\|_{\infty}$ & $\||e_{v}(h,\sigma)|\|_{\infty}$ & $CO(h,\sigma)$ & CPU (s) \\
             \hline
            $2^{-5}$ & $0.5314$ & $0.5311$ & $7.7470\times10^{-5}$ & ---    & $1.0631$ & $1.0628$ & $4.1198\times10^{-5}$    &   -- & $3.4681 $\\
            $2^{-6}$ & $0.5314$ & $0.5312$ & $1.9354\times10^{-5}$ & 2.0001 & $1.0631$ & $1.0630$ & $1.0354\times10^{-6}$ & 1.9924  & $6.9317 $\\
            $2^{-7}$ & $0.5315$ & $0.5314$ & $4.8443\times10^{-6}$ & 1.9983 & $1.0632$ & $1.0630$ & $2.6324\times10^{-7}$ & 1.9758  & $13.7698 $\\
            $2^{-8}$ & $0.5317$ & $0.5316$ & $1.2027\times10^{-6}$ & 2.0100 & $1.0633$ & $1.0633$ & $6.6922\times10^{-8}$ & 2.0000 & $26.0938 $\\
            $2^{-9}$ & $0.5319$ & $0.5317$ & $2.7605\times10^{-7}$ & 2.1233 & $1.0636$ & $1.0635$ & $1.6771\times10^{-8}$ & 1.9965 & $55.9712 $\\
           \hline
          \end{tabular}
          \end{array}
           \end{equation*}
          \text{\,}\\
          \text{\,}\\
          $\bullet$ \textbf{Example $2$}. Suppose that $\Omega=[0,\text{\,}1]^{2}$, is the fluid region and $T=10$ is the time interval. Consider the following $2D$ Gray-Scott  model described in \cite{zwz} as
       \begin{equation*}
      \left\{
        \begin{array}{ll}
          u_{t}-\alpha_{1}\Delta u=\beta_{0}(1-u)-uv^{2}, & \hbox{on $\Omega\times[0,\text{\,}1]$}\\
          \text{\,}\\
          v_{t}-\alpha_{2}\Delta v=-(\beta_{0}+k_{0})v+uv^{2}, & \hbox{on $\Omega\times[0,\text{\,}1]$}
        \end{array}
      \right.
     \end{equation*}
     where $\alpha_{1}=1.6\times10^{-5}$ and $\alpha_{2}=8\times10^{-6}$.\\

      In general, the Gray-Scott problem $(\ref{1})$-$(\ref{3})$ does not possess a known closed form analytical solution. To test proposed numerical methods, some authors \cite{45tir,tir} assume that the exact solution is a high-fidelity approximate solution computed using a very small time step. On the other hand, the Gray-Scott model is known for its slow dynamics: patterns only appear gradually and transients may look very similar at earlier times (for example, when the time is greater than $100$), whereas the model usually settles into its characteristic long-term structures only at much later times (for instance, time is greater than $2000$). An alternate computational technique is motivated by the numerical difficulties of calculating such high-fidelity solutions up to time greater than $2000$ or beyond. Additionally, to construct exact solutions, we add source terms to the Gray-Scott model and prescribe smooth fake solutions $\bar{u}$ and $\bar{v}$. As a result, we consider the following modified system
      \begin{equation}\label{me}
      \left\{
        \begin{array}{ll}
          u_{t}-\alpha_{1}\Delta u=\beta_{0}(1-u)-uv^{2}+f_{1}(u), & \hbox{on $\Omega\times[0,\text{\,}10]$} \\
          \text{\,}\\
          v_{t}-\alpha_{2}\Delta v=-(\beta_{0}+k_{0})v+uv^{2}+f_{2}(v), & \hbox{on $\Omega\times[0,\text{\,}10]$}
        \end{array}
      \right.
     \end{equation}
     where $\beta_{0}=3.7\times10^{-2}$ and $k_{0}=6\times10^{-2}$. The source terms $f_{1}(u)$ and $f_{2}(v)$ are chosen so that the system of equations $(\ref{me})$ has an exact solution $(\bar{u},\bar{v})$ defined as
      \begin{equation*}
     \bar{u}(x,y,t)=1-b\cos(2\pi x)\cos(2\pi y)\sin(2\pi t),\text{\,\,\,\,\,}\bar{v}(x,y,t)=\frac{1}{4}[1+\cos(2\pi x)\cos(2\pi y)\sin(2\pi t)],
     \end{equation*}
      where $b\in(0,\text{\,}1)$. Without loss of generality, we set $b=\frac{1}{2}$. The initial and homogeneous Neumann boundary conditions are obtained from the analytical solution $(\bar{u},\bar{v})$.\\

     \textbf{Table 2} $\label{T2}$. Stability and convergence order $CO(h,\sigma)$ of the developed two-stage explicit/implicit approach with spectral orthogonal basis Galerkin finite element scheme with varying space step $h$ and time step $\sigma$.
          \begin{equation*}
          \begin{array}{c}
          \text{\,proposed computational approach:\,\,}\sigma=2^{-8}\\
           \begin{tabular}{cccccccccc}
            \hline
         $h$  &  $\||u|\|_{\infty}$&  $\||u_{h}|\|_{\infty}$&  $\||e_{u}(h,\sigma)|\|_{\infty}$ & $CO(h,\sigma)$ &  $\||v|\|_{\infty}$ &  $\||v_{h}|\|_{\infty}$ & $\||e_{v}(h,\sigma)|\|_{\infty}$ & $CO(h,\sigma)$ & CPU (s) \\
             \hline
            $2^{-3}$ & $3.2658$ & $3.2657$ & $3.2176\times10^{-3}$ & ---      & $0.8535$ & $0.8533$ & $1.7509\times10^{-3}$ &  ---    & $3.9086 $\\
            $2^{-4}$ & $3.2658$ & $3.2659$ & $4.0991\times10^{-4}$ & $2.9726$ & $0.8537$ & $0.8535$ & $2.1860\times10^{-4}$ &  3.0017 & $8.4192 $\\
            $2^{-5}$ & $3.2659$ & $3.2660$ & $5.0365\times10^{-5}$ & $3.0248$ & $0.8537$ & $0.8536$ & $2.9300\times10^{-5}$ &  2.8993 & $ 16.7517 $\\
            $2^{-6}$ & $3.2661$ & $3.2660$ & $5.8197\times10^{-6}$ & $3.1134$ & $0.8538$ & $0.8536$ & $3.7183\times10^{-6}$ &  2.9782 & $ 31.7813 $\\
            $2^{-7}$ & $3.2663$ & $3.2662$ & $6.4866\times10^{-7}$ & $3.1654$ & $0.8540$ & $0.8538$ & $4.5120\times10^{-7}$ &  3.0428 & $ 63.5213$\\
           \hline
          \end{tabular}
           \end{array}
            \end{equation*}
          \begin{equation*}
          \begin{array}{c}
          \text{\,proposed computational approach:\,\,}h=2^{-6}\\
         \begin{tabular}{cccccccccc}
            \hline
         $\sigma$  &  $\||u|\|_{\infty}$&  $\||u_{h}|\|_{\infty}$&  $\||e_{u}(h,\sigma)|\|_{\infty}$ & $CO(h,\sigma)$ &  $\||v|\|_{\infty}$ &  $\||v_{h}|\|_{\infty}$ & $\||e_{v}(h,\sigma)|\|_{\infty}$ & $CO(h,\sigma)$ & CPU (s) \\
             \hline
            $2^{-5}$ & $3.2656$ & $3.2654$ & $4.9711\times10^{-4}$ & ---    & $0.8533$ & $0.8534$ & $2.3365\times10^{-4}$    &   -- & $3.9086 $\\
            $2^{-6}$ & $3.2657$ & $3.2655$ & $1.2448\times10^{-4}$ & 1.9976 & $0.8535$ & $0.8534$ & $5.9109\times10^{-5}$ & 1.9829  & $8.4192 $\\
            $2^{-7}$ & $3.2658$ & $3.2657$ & $3.1369\times10^{-5}$ & 1.9885 & $0.8537$ & $0.8536$ & $1.4742\times10^{-5}$ & 2.0034  & $16.7517 $\\
            $2^{-8}$ & $3.2659$ & $3.2658$ & $7.8423\times10^{-6}$ & 2.0000 & $0.8538$ & $0.8536$ & $3.5041\times10^{-6}$ & 2.0728  & $31.7813 $\\
            $2^{-9}$ & $3.2661$ & $3.2660$ & $1.9539\times10^{-6}$ & 2.0049 & $0.8539$ & $0.8538$ & $8.0935\times10^{-7}$ & 2.1142  & $63.5213 $\\
           \hline
          \end{tabular}
          \end{array}
           \end{equation*}
        \text{\,}\\
          \text{\,}\\
        \text{\,}\\
      $\bullet$ \textbf{Example $3$} \cite{45tir}. Suppose that $\Omega=[0,\text{\,}2.5]^{2}$, is the fluid region and let $T=1000$, be the final time. We analyze the constructed two-stage explicit/implicit approach $(\ref{s1})$-$(\ref{s6})$. We consider the two-dimensional Gray-Scott problem defined by
      \begin{equation*}
      \left\{
        \begin{array}{ll}
          u_{t}-\alpha_{1}\Delta u=\beta_{0}(1-u)-uv^{2}, & \hbox{on $\Omega\times(0,\text{\,}1000]$} \\
          \text{\,}\\
          v_{t}-\alpha_{2}\Delta v=-(\beta_{0}+k_{0})v+uv^{2}, & \hbox{on $\Omega\times(0,\text{\,}1000]$}
        \end{array}
      \right.
     \end{equation*}
     with the initial conditions
      \begin{equation*}
     v_{t}(x,y,0)=v(x,y,0)=\left\{
                \begin{array}{ll}
                  \frac{1}{4}\sin^{2}(4\pi x)\sin^{2}(4\pi y), & \hbox{if $1\leq x,y\leq 1.5$} \\
                  \text{\,}\\
                  0, & \hbox{otherwise}
                \end{array}
              \right.,\text{\,\,\,\,}u_{t}(x,y,0)=u(x,y,0)=1-2v(x,y,0),
     \end{equation*}
      and Neumann boundary conditions. Here: $\alpha_{1}=8\times10^{-5}$, $\alpha_{2}=4\times10^{-5}$, $\beta_{0}=3\times10^{-2}$, and $k_{0}=6\times10^{-2}$. Because of the lack of an analytical solution, to check the theoretical result provided by Theorems $\ref{t1}$-$\ref{t2}$, a high-fidelity approximate solution computed using a very small time step $\sigma=2^{-10}$, is considered as the exact solution.\\

       \textbf{Table 3} $\label{T3}$. Stability and convergence order $CO(h,\sigma)$ of the developed two-stage explicit/implicit approach with spectral orthogonal basis Galerkin finite element scheme with varying space step $h$ and time step $\sigma$.
          \begin{equation*}
          \begin{array}{c}
          \text{\,proposed computational approach:\,\,}\sigma=2^{-8}\\
           \begin{tabular}{cccccccc}
            \hline
         $h$  &    $\||u_{h}|\|_{\infty}$&  $\||e_{u}(h,\sigma)|\|_{\infty}$ & $CO(h,\sigma)$ &  $\||v_{h}|\|_{\infty}$ & $\||e_{v}(h,\sigma)|\|_{\infty}$ & $CO(h,\sigma)$ & CPU (s) \\
             \hline
            $2^{-3}$ &  $7.8700$ & $7.9338\times10^{-2}$ & ---      &  $0.1673$ & $4.8520\times10^{-2}$ &  ---    & $11.3698 $\\
            $2^{-4}$ &  $7.8701$ & $1.3789\times10^{-2}$ & $2.5246$ &  $0.1673$ & $8.6813\times10^{-3}$ &  2.4826 & $24.3951 $\\
            $2^{-5}$ &  $7.8702$  & $2.5733\times10^{-3}$ & $2.4218$&  $0.1674$ & $1.6026\times10^{-3}$ &  2.4375 & $48.6341 $\\
            $2^{-6}$ &  $7.8702$ & $4.2503\times10^{-4}$ & $2.5980$ &  $0.1674$ & $2.6879\times10^{-4}$ &  2.5759 & $96.5678 $\\
            $2^{-7}$ &  $7.8703$  & $6.8121\times10^{-5}$ & $2.6414$&  $0.1675$ & $4.1966\times10^{-5}$ &  2.6792 & $207.5145$\\
           \hline
          \end{tabular}
           \end{array}
            \end{equation*}
          \begin{equation*}
          \begin{array}{c}
          \text{\,proposed computational approach:\,\,}h=2^{-6}\\
         \begin{tabular}{cccccccc}
            \hline
         $\sigma$  &   $\||u_{h}|\|_{\infty}$&  $\||e_{u}(h,\sigma)|\|_{\infty}$ & $CO(h,\sigma)$ &  $\||v_{h}|\|_{\infty}$ & $\||e_{v}(h,\sigma)|\|_{\infty}$ & $CO(h,\sigma)$ & CPU (s) \\
             \hline
            $2^{-5}$ &  $7.8702$ & $9.1221\times10^{-3}$ & ---    &  $0.1672$ & $6.2335\times10^{-3}$ &   --   & $11.3698 $\\
            $2^{-6}$ &  $7.8703$ & $3.2998\times10^{-3}$ & 1.4670 &  $0.1672$ & $2.2696\times10^{-3}$ & 1.4576 & $24.3951 $\\
            $2^{-7}$ &  $7.8703$ & $1.2128\times10^{-3}$ & 1.4444 &  $0.1674$ & $8.0165\times10^{-4}$ & 1.5014 & $48.6341 $\\
            $2^{-8}$ &  $7.8704$ & $4.2503\times10^{-4}$ & 1.5127 &  $0.1674$ & $2.6879\times10^{-4}$ & 1.5765 & $96.5678 $\\
            $2^{-9}$ &  $7.8705$ & $1.3920\times10^{-4}$ & 1.6104 &  $0.1676$ & $8.6681\times10^{-5}$ & 1.6327 & $207.5145 $\\
           \hline
          \end{tabular}
          \end{array}
           \end{equation*}

        \textbf{Tables} $1$-$3$ indicate that the constructed two-stage explicit/implicit approach $(\ref{s1})$-$(\ref{s6})$ is temporal second-order accurate and spatial third-order convergent. In addition, Figures $\ref{figure1}$-$\ref{figure3}$ show that the developed strategy is unconditionally stable. These computational results confirm the theoretical studies provided in Theorems $\ref{t1}$-$\ref{t2}$. Furthermore, the graphs illustrate how patterns will eventually split and combine to create a stable, center-symmetric speckle pattern. As the value $\beta_{0}$ rises, the resulting pattern progressively changes from a simple symmetric point to a symmetric design where curves and lines coexist. At $\beta_{0}=3\times10^{-2}$, a complex pattern emerges (see Figure $\ref{figure3}$). Finally, the analysis discussed in this section indicates that the new numerical approach $(\ref{s1})$-$(\ref{s6})$ for simulating the initial-boundary value problem $(\ref{1})$-$(\ref{3})$, computes efficiently at the first and second stages the numerical solutions and maintains strong stability and high-accuracy.

     \section{General conclusions and future investigations}\label{sec5}
     This paper has constructed a strong second-order two-stage explicit/implicit technique with spectral orthogonal basis Galerkin finite element method for simulating a two-dimensional Gray-Scott system $(\ref{1})$ with initial-boundary conditions $(\ref{2})$-$(\ref{3})$. The developed numerical strategy $(\ref{s1})$-$(\ref{s6})$ approximates in time using a second-order explicit-implicit method whereas the Galerkin finite element formulation along with a spectral orthogonal basis are employed to discretize the space derivatives. The errors increased at the first phase are balanced by the ones decreased at the second phase so that the stability is maintained. Additionally, the use of the spectral orthogonal basis minimizes the space errors. The theory suggests that the proposed algorithm is unconditionally stable, temporal second-order accurate and spatial $qth$-order convergent in the $L^{\infty}(0,T;[L^{\infty}(\Omega)]^{2})$-norm. Numerical experiments are carried out to confirm the theoretical results and to establish the performance of the constructed numerical method. As a result, the new computational technique computes efficiently numerical solutions and preserves a strong stability and high-order accuracy. Our future works will develop a strong second-order two-stage explicit/implicit approach with spectral orthogonal basis Galerkin finite element method for solving the two-dimensional parabolic interface problems.

         \subsection*{Ethical Approval}
          Not applicable.
         \subsection*{Availability of supporting data}
          Not applicable.
         \subsection*{Declaration of Interest Statement}
          The author declares that he has no conflict of interests.
         \subsection*{Authors' contributions}
          The whole work has been carried out by the author.
         \subsection*{Funding}
          Not applicable.

    \newpage

        \begin{figure}
         \begin{center}
        Stability and convergence of the two-stage explicit/implicit computational technique with spectral orthogonal basis Galerkin FEM.
         \begin{tabular}{c c}
         \psfig{file=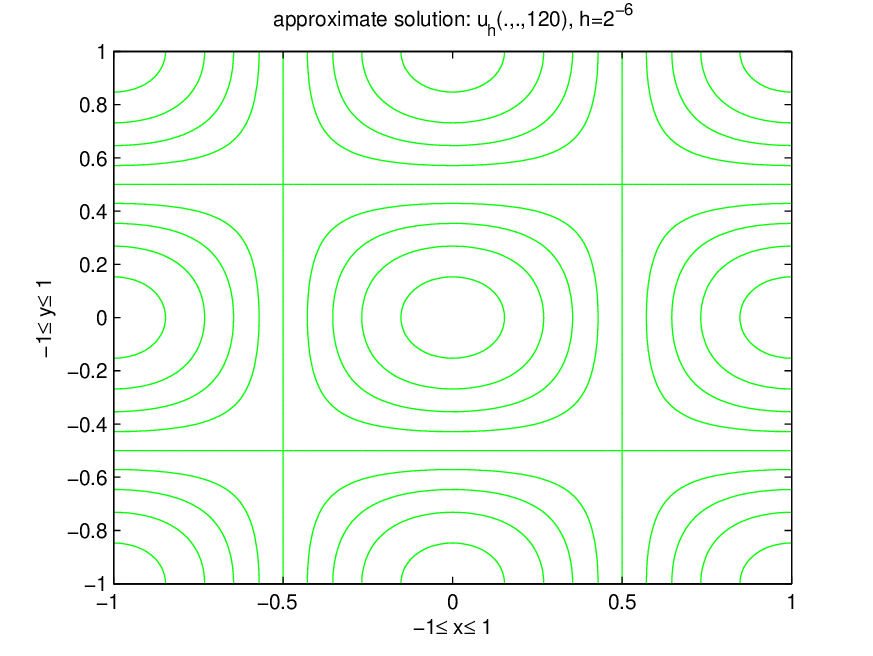,width=6cm} & \psfig{file=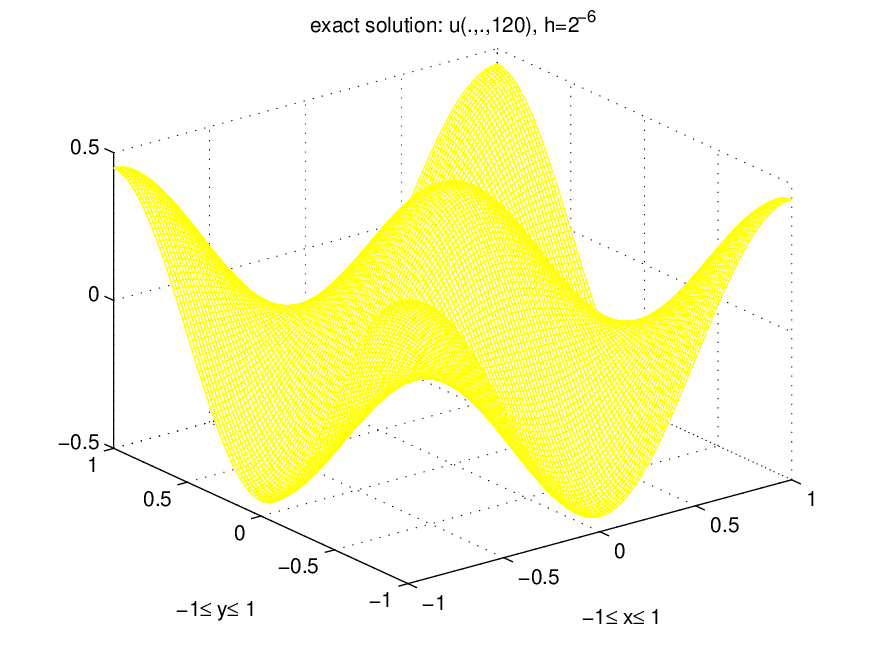,width=6cm}\\
         \psfig{file=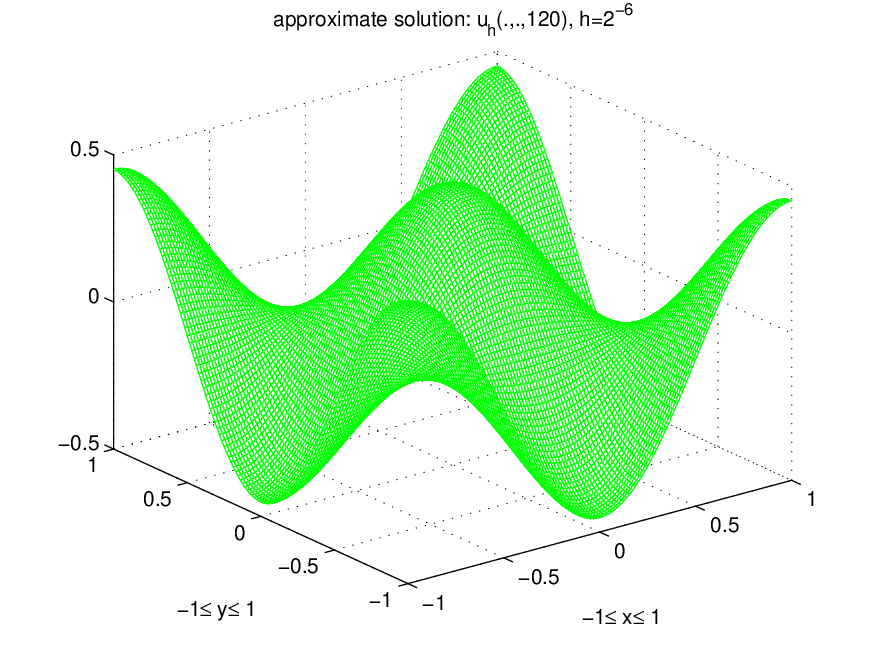,width=6cm} & \psfig{file=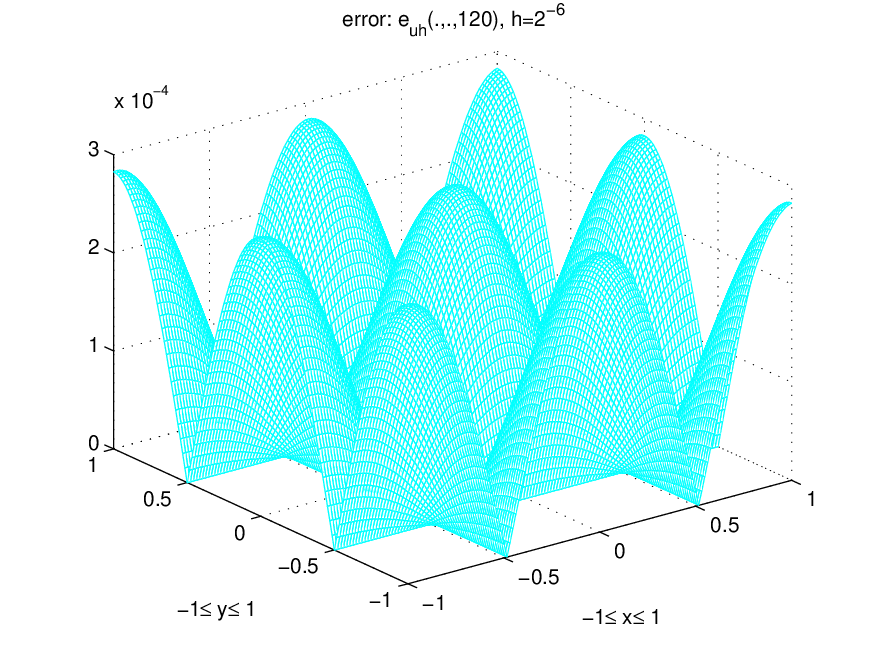,width=6cm}\\
         \psfig{file=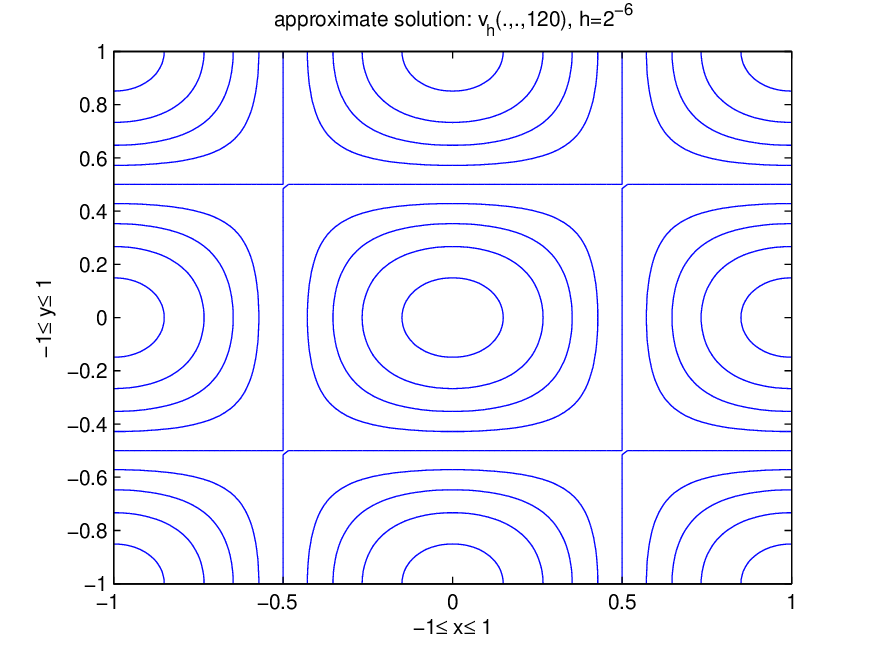,width=6cm} & \psfig{file=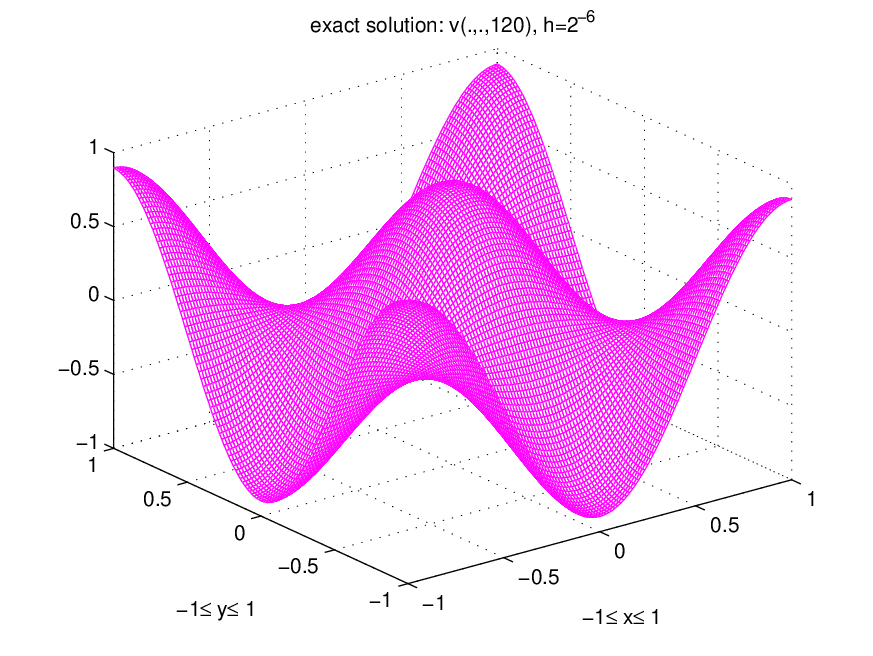,width=6cm}\\
         \psfig{file=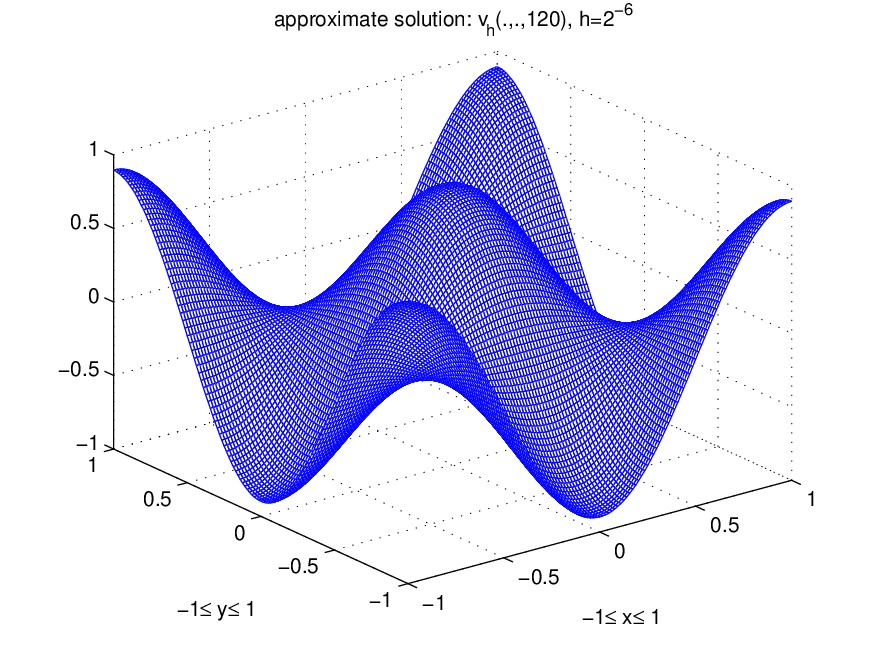,width=6cm} & \psfig{file=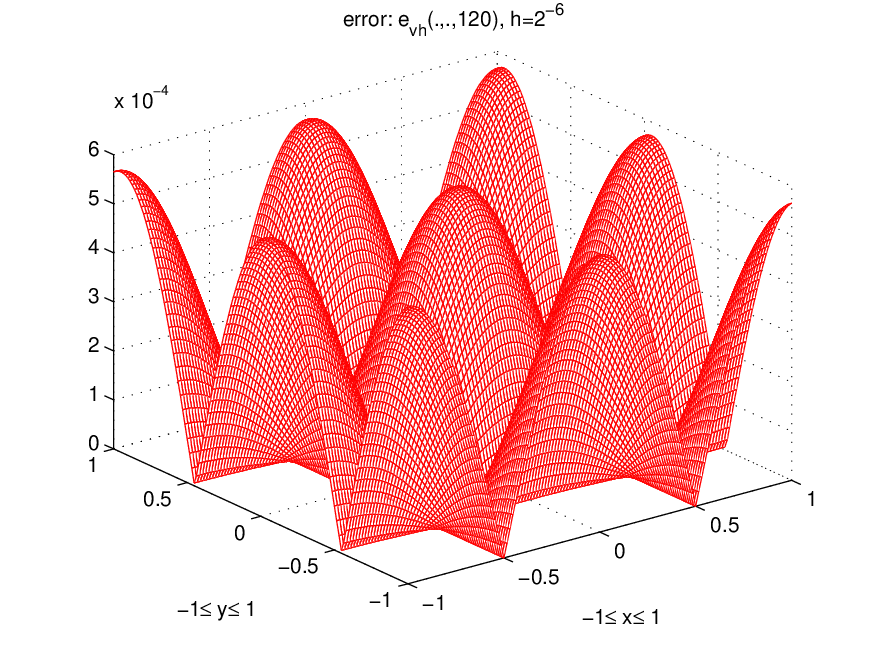,width=6cm}\\
         \end{tabular}
        \end{center}
        \caption{exact solution, approximate solution and error corresponding to Example 1}
        \label{figure1}
        \end{figure}

           \begin{figure}
         \begin{center}
        Stability and convergence of the two-stage explicit/implicit computational technique with spectral orthogonal basis Galerkin FEM.
         \begin{tabular}{c c}
         \psfig{file=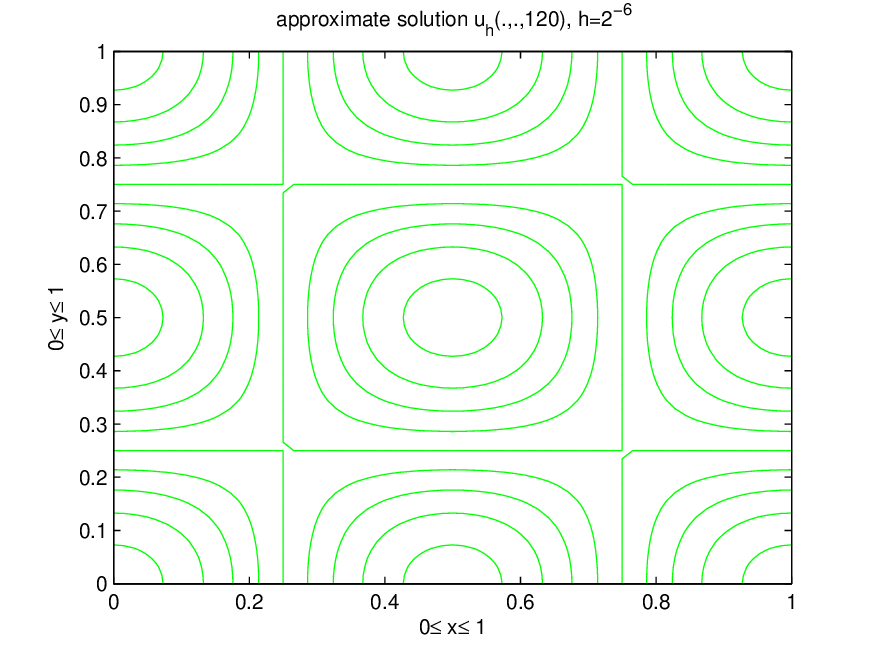,width=6cm}  & \psfig{file=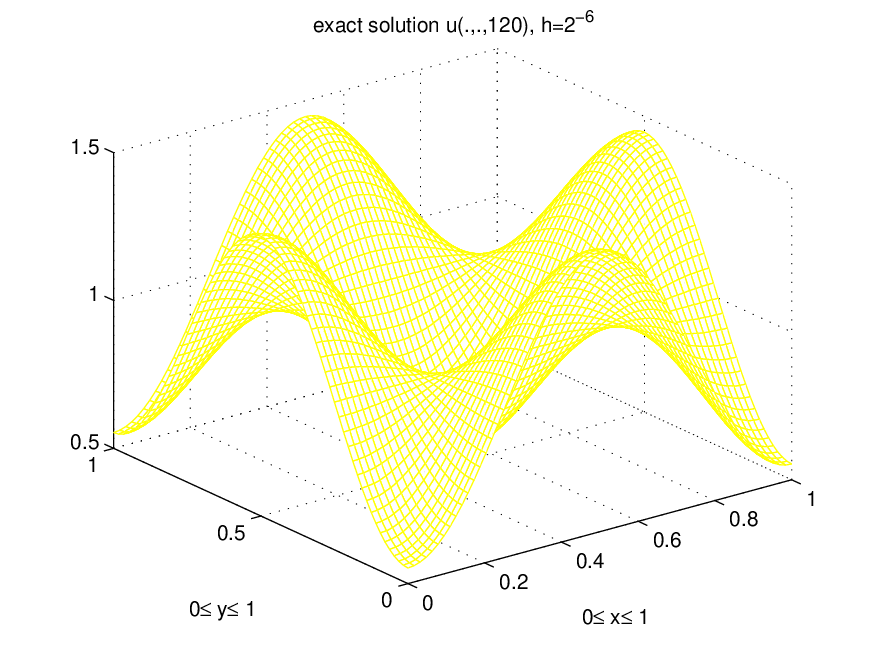,width=6cm}\\
         \psfig{file=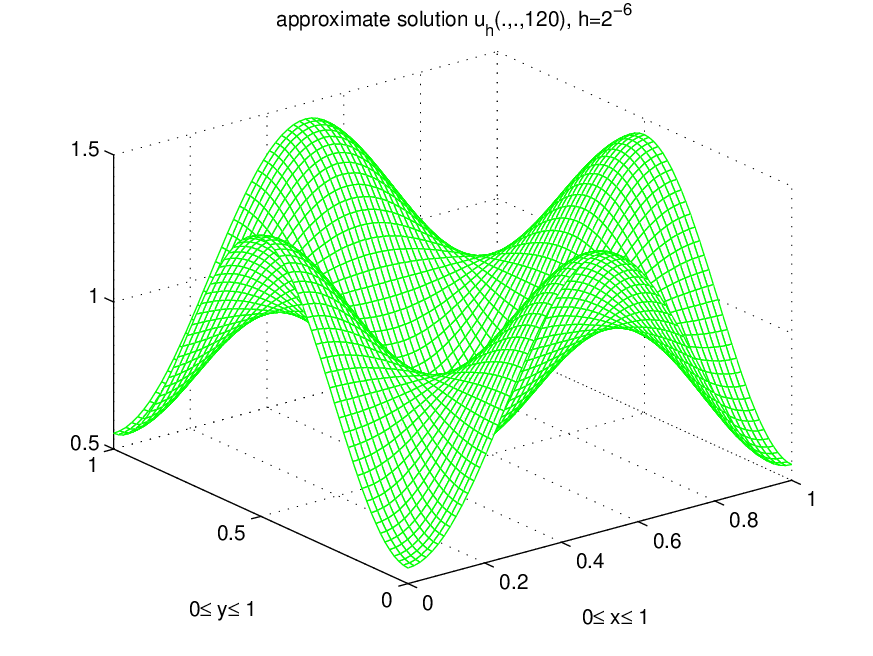,width=6cm} & \psfig{file=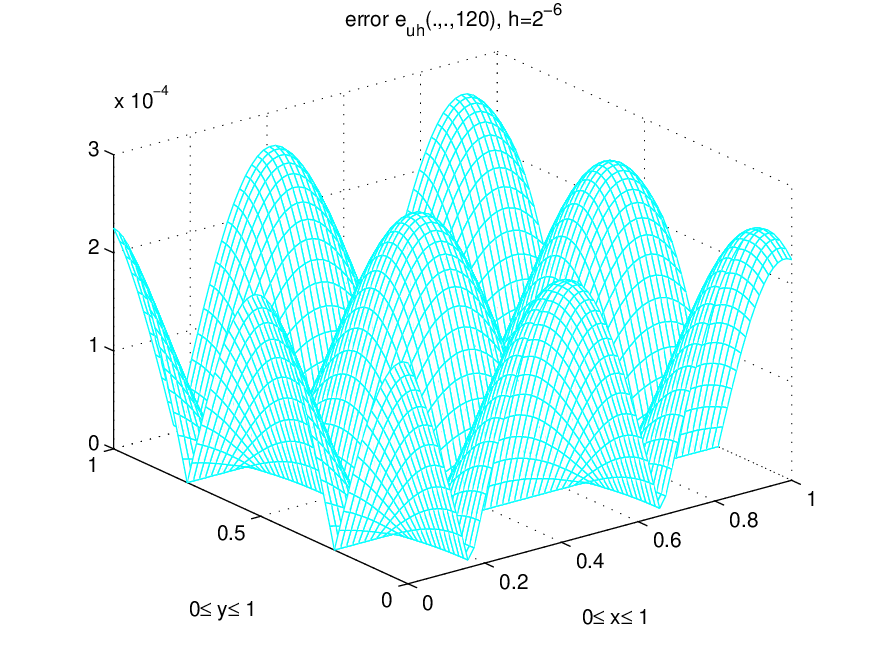,width=6cm}\\
         \psfig{file=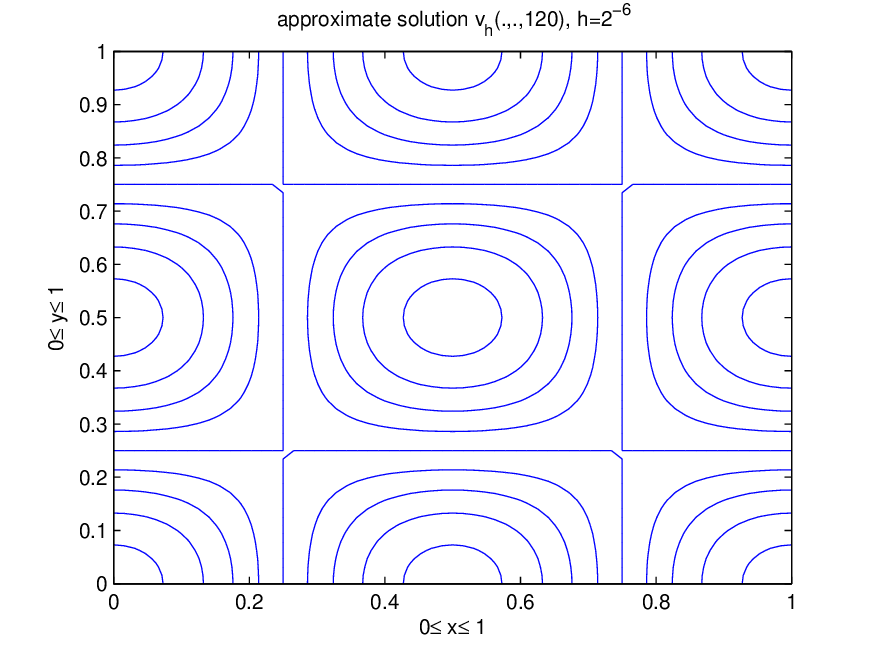,width=6cm} & \psfig{file=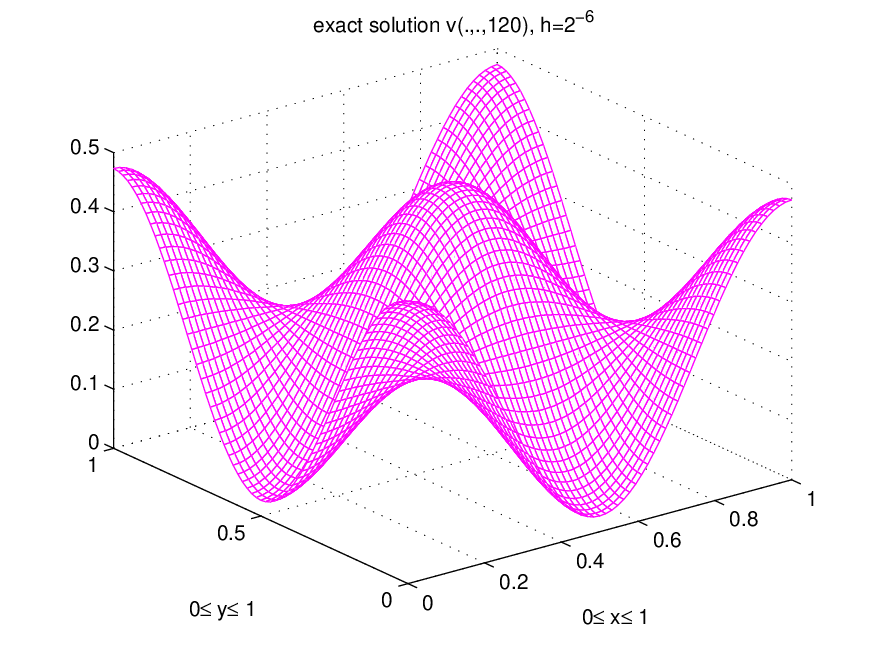,width=6cm}\\
         \psfig{file=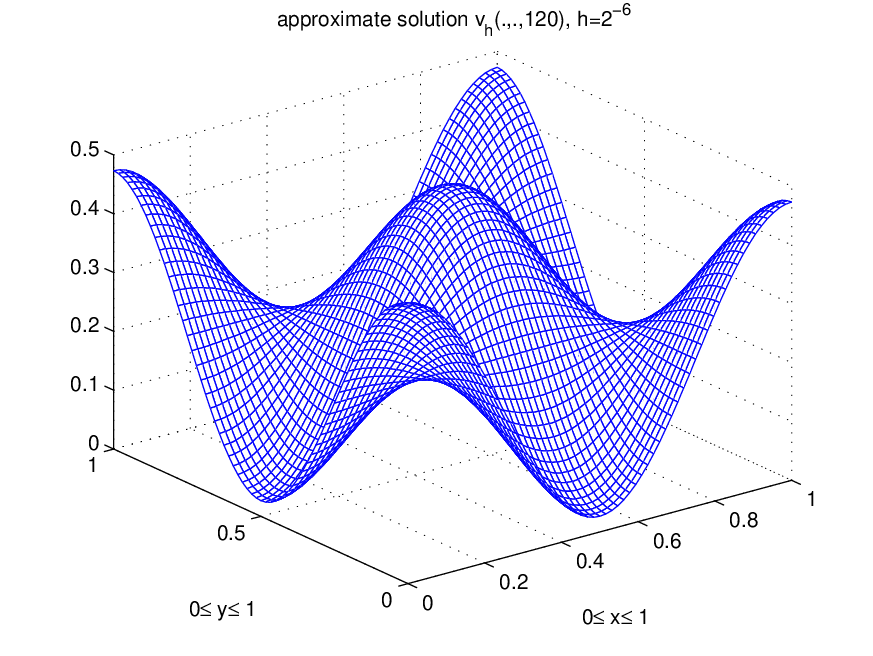,width=6cm} & \psfig{file=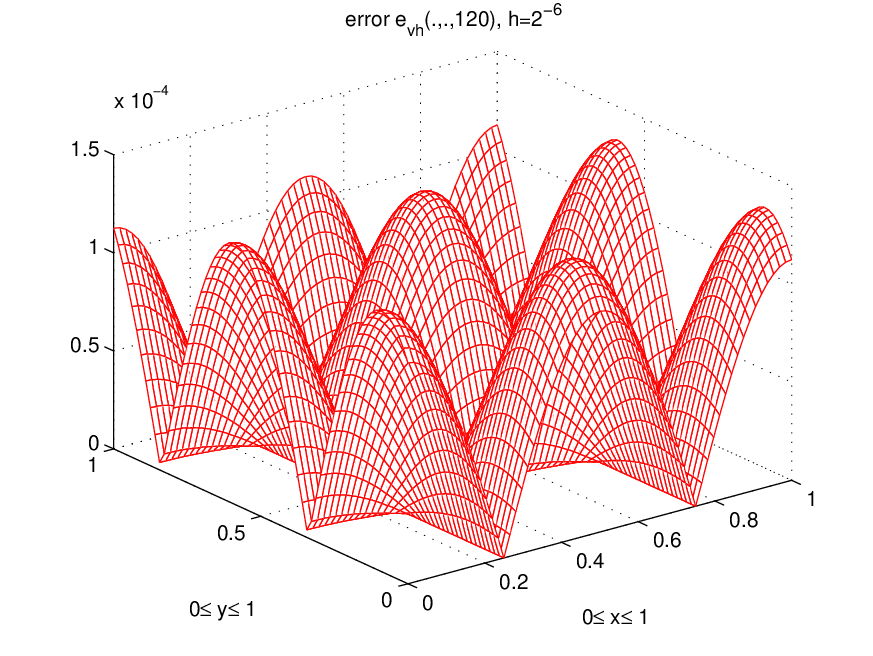,width=6cm}\\
         \end{tabular}
        \end{center}
        \caption{exact solution, approximate solution and error associated with Example 2}
        \label{figure2}
        \end{figure}

       \begin{figure}
         \begin{center}
        Stability and convergence of the two-stage explicit/implicit computational technique with spectral orthogonal basis Galerkin FEM.
         \begin{tabular}{c c}
         \psfig{file=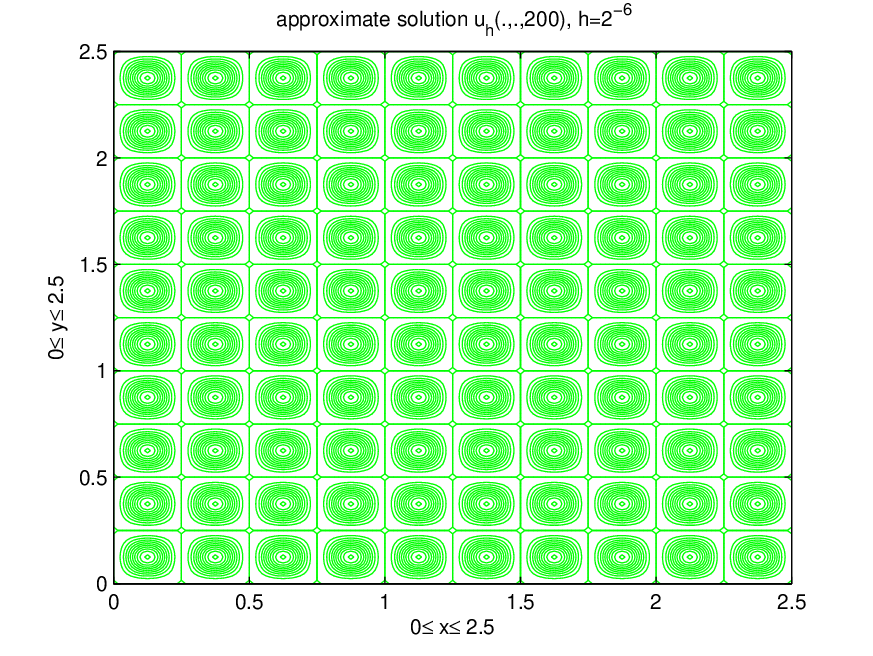,width=6cm} & \psfig{file=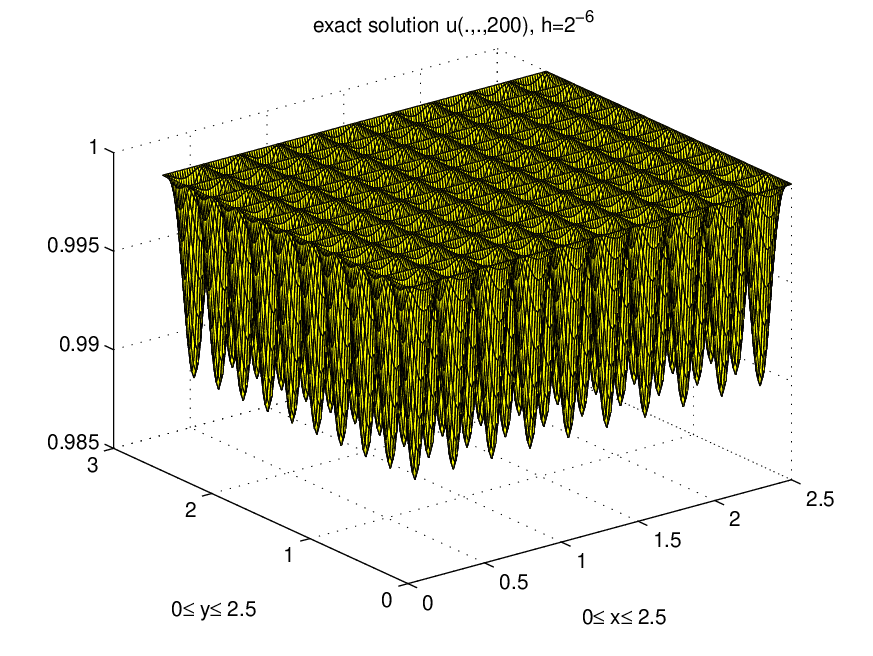,width=6cm}\\
         \psfig{file=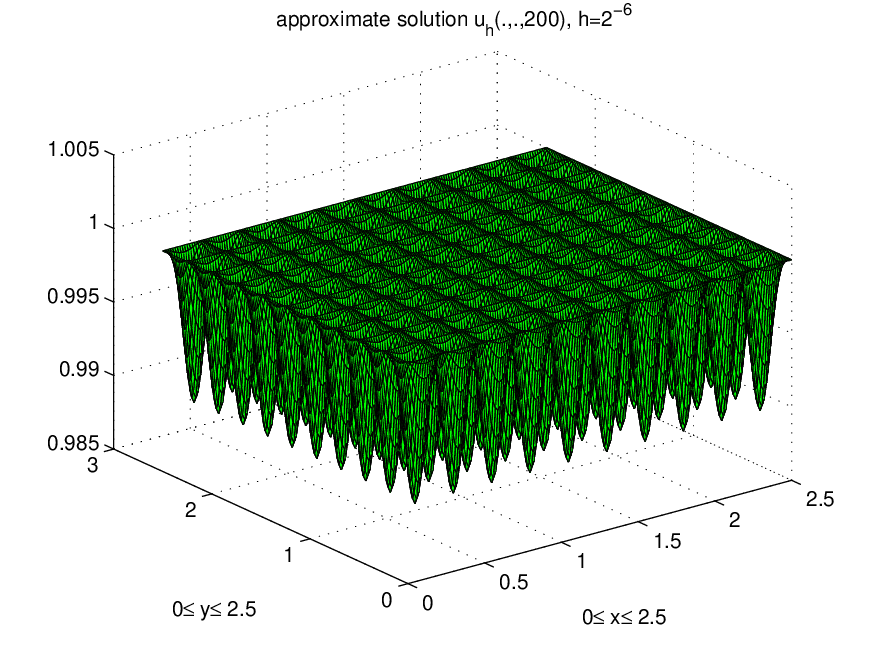,width=6cm} & \psfig{file=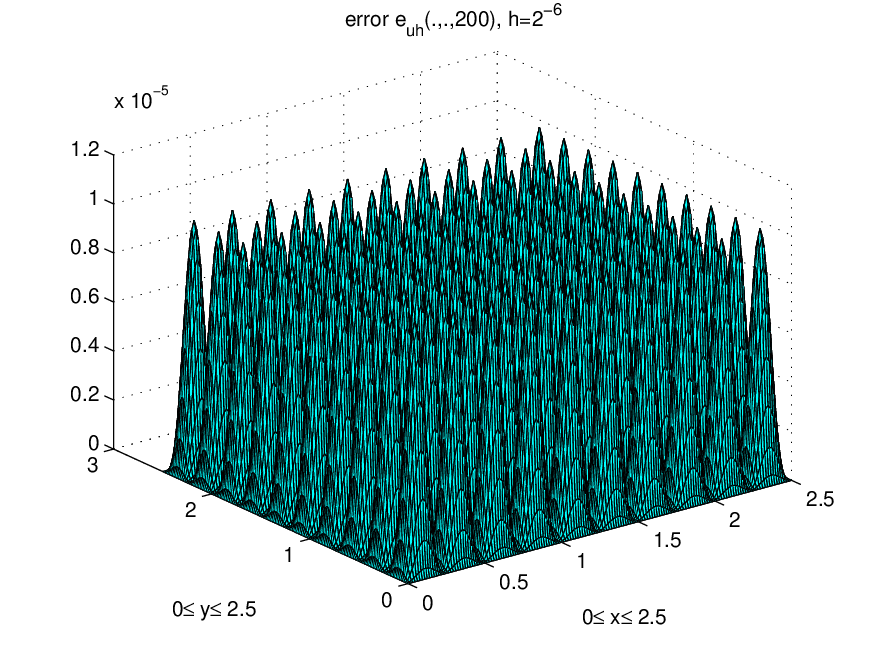,width=6cm}\\
         \psfig{file=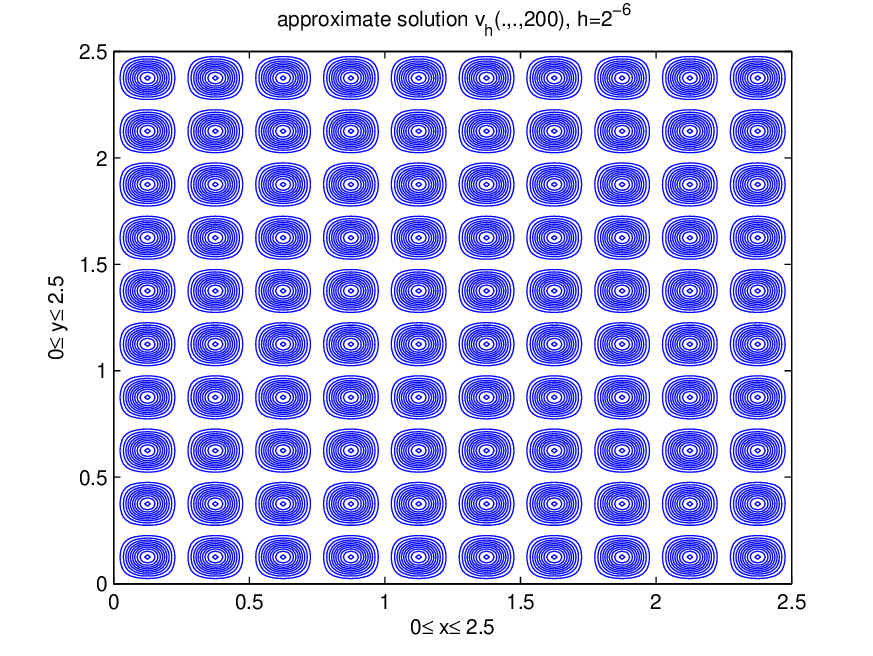,width=6cm} & \psfig{file=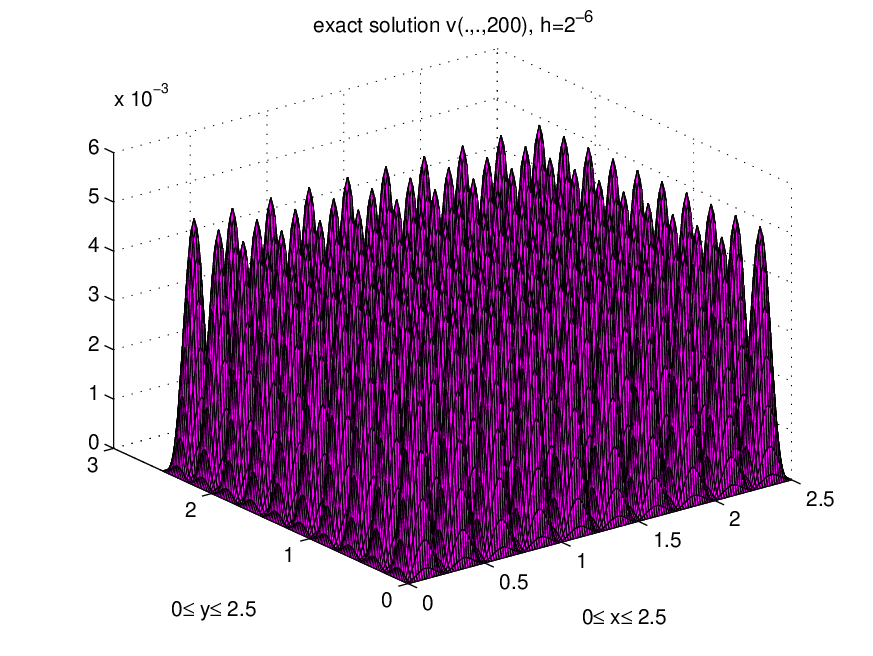,width=6cm}\\
         \psfig{file=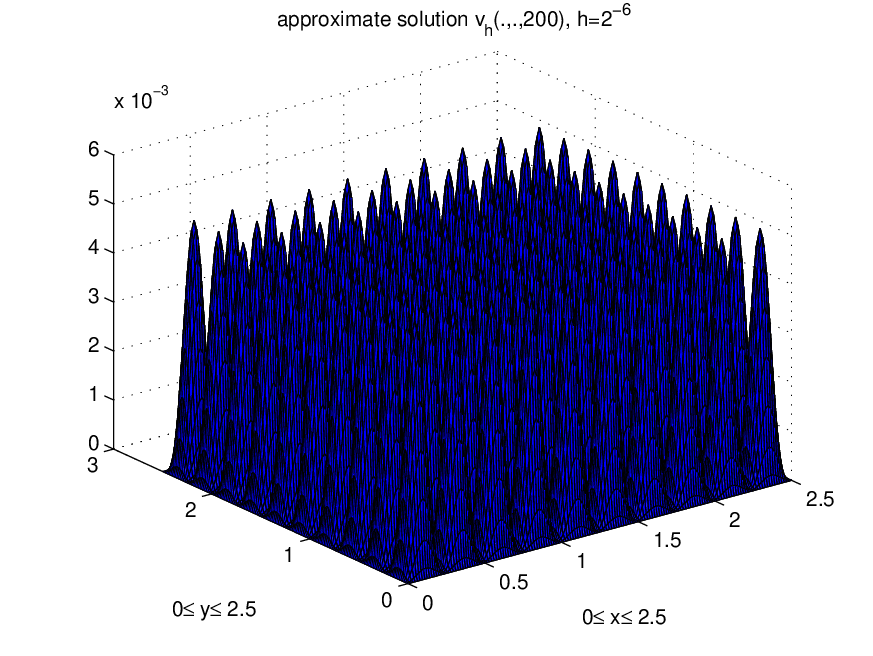,width=6cm} & \psfig{file=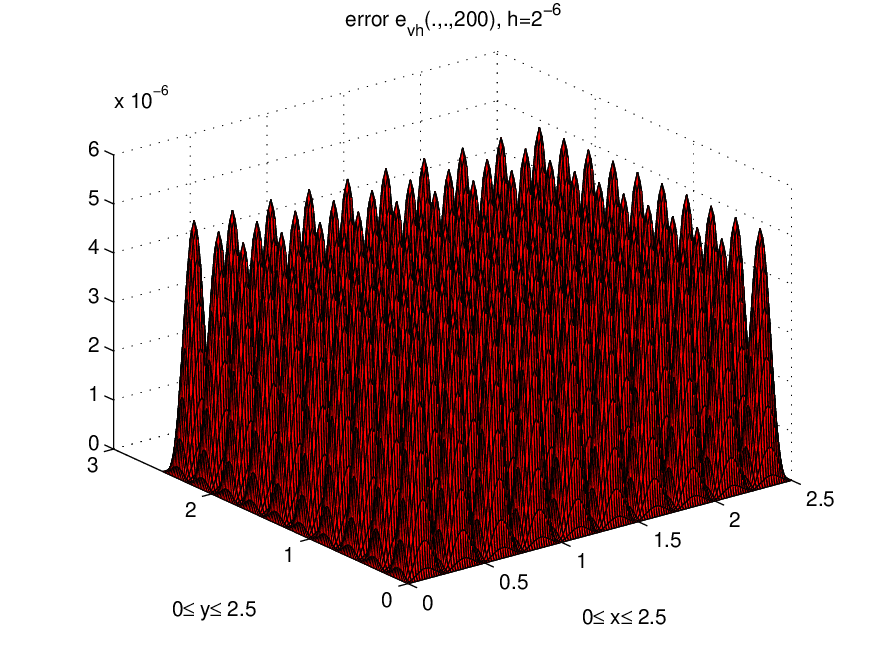,width=6cm}\\
         \end{tabular}
        \end{center}
        \caption{exact solution, approximate solution and error associated with Example 3}
        \label{figure3}
        \end{figure}

       \end{document}